\theoremstyle{plain}
\newtheorem{thm}{Theorem}[section]
\newtheorem{prop}[thm]{Proposition}
\newtheorem{cor}[thm]{Corollary}
\newtheorem{conj}[thm]{Conjecture}
\newtheorem*{thm*}{Theorem}
\newtheorem*{conj*}{Conjecture}
\newtheorem*{claim*}{Claim}
\newtheorem*{prop*}{Proposition}
\theoremstyle{definition}
\newtheorem*{nota*}{Notation}
\newtheorem{rem}[thm]{Remark}
\newtheorem{prob}[thm]{Problem}
\newtheoremstyle{introTheorems}
  {}
  {}
  {\itshape}
  {}
  {\bfseries}
  {}
  { }
  {\thmname{#1}
  \textnormal{\bf \thmnote{#3}.$\hspace{-.1cm}$}
  }
\theoremstyle{introTheorems}
\newtheorem{introProp}{Proposition}
\newtheorem{introCor}{Corollary}
\newtheorem{introConj}{Conjecture}
\newcommand{\Q}{\mathbb{Q}}
\newcommand{\Z}{\mathbb{Z}}
\newcommand{\Ns}{\mathbb{Z}_{>0}}
\newcommand{\N}{\mathbb{Z}_{\geq0}}
\newcommand{\C}{\mathbb{C}}
\newcommand{\wt}{\operatorname{wt}}
\newcommand{\tr}{\operatorname{tr}}
\renewcommand{\i}{i}
\newcommand{\e}{e}
\newcommand{\End}{\operatorname{End}}
\newcommand{\Aut}{\operatorname{Aut}}
\newcommand{\voa}{vertex operator algebra}
\newcommand{\Voa}{Vertex operator algebra}
\newcommand{\VOA}{Vertex Operator Algebra}
\newcommand{\svoa}{vertex operator superalgebra}
\newcommand{\aia}{abelian intertwining algebra}
\newcommand{\vac}{\textbf{1}}
\newcommand{\ch}{\operatorname{ch}}
\newcommand{\sch}{\operatorname{sch}}
\newcommand{\id}{\operatorname{id}}
\newcommand{\g}{\mathfrak{g}}
\renewcommand{\sl}{\mathfrak{sl}}
\newcommand{\T}{\mathcal{T}}
\newcommand{\I}{\mathcal{I}}
\newcommand{\strat}{strongly rational}
\newcommand{\no}{\,{\raise0.25em\hbox{$\mathop{\hphantom{\cdot}}\limits^{_{\circ}}_{^{\circ}}$}}\,}
\newcommand{\Rep}{\operatorname{Rep}}
\newcommand{\SU}{\mathrm{SU}}
\begin{document}

\title[{Character Identities Between Affine and Virasoro VOA Modules}]{Character Identities Between Affine and Virasoro Vertex Operator Algebra Modules}
\author[Dražen Adamović and Sven Möller]{Dražen Adamović\textsuperscript{\lowercase{a}} and Sven Möller\textsuperscript{\lowercase{b}}}
\thanks{\textsuperscript{a}{Department of Mathematics, Faculty of Science, University of Zagreb, Bijeni\v{c}ka cesta 30, 10000 Zagreb, Croatia}}
\thanks{\textsuperscript{b}{Fachbereich Mathematik, Universität Hamburg, Bundesstraße 55, 20146 Hamburg, Germany}}
\thanks{Email: \href{mailto:adamovic@math.hr}{\nolinkurl{adamovic@math.hr}}, \href{mailto:math@moeller-sven.de}{\nolinkurl{math@moeller-sven.de}}}

\begin{abstract}
The affine \voa{}s for $\sl_2$ and the Virasoro minimal models are related by Drinfeld-Sokolov reduction and by the Goddard-Kent-Olive coset construction. In this work, we propose another connection based on certain character identities between these \voa{}s and their modules. This relates the simple affine \voa{}s $L_k(\sl_2)$ at admissible levels $k=-2+q/p$ to the rational $(q,3p)$-minimal models $L_\mathrm{Vir}(c_{q,3p},0)$, and also extends to the nonadmissible levels with $q=1$.

Several special cases are particularly interesting. In the nonadmissible case $q=1$, the character identities extend to certain \aia{}s, specifically $\mathcal{V}^{(p)}$ and the doublet $\smash{\mathcal{A}^{(3p)}}$. Specialising further to $p=2$, where $\smash{\mathcal{V}^{(2)}}$ is the simple small $\mathcal{N}=4$ superconformal algebra of central charge $\smash{c=-9}$, this recovers, via the 4d/2d-correspondence, a known identity between the Schur indices of the 4d $\mathcal{N}=4$ supersymmetric Yang-Mills theory for $\SU(2)$ and the 4d $\mathcal{N}=2$ $(3,2)$ Argyres-Douglas theory.

In the boundary admissible case $q=2$, in a similar vein, we obtain an identity between the Schur indices of 4d $\mathcal{N}=2$ Argyres-Douglas theories of types $(A_1,D_{2n+1})$ and $(A_1,A_{6n})$.

On the other hand, for integral levels, $p=1$, where both involved \voa{}s are \strat{}, our character identity induces a Galois conjugation between the representation categories $\smash{\Rep(L_{-2+q}(\sl_2))}$ and $\smash{\Rep(L_\mathrm{Vir}(c_{q,3},0))}$; and for small values of $q$, the characters are related by the action of certain Hecke operators.

Finally, we also sketch how to extend the results of this paper to relaxed highest-weight and Whittaker modules.
\end{abstract}

\maketitle

\setcounter{tocdepth}{1}
\tableofcontents
\setcounter{tocdepth}{3}

\pagebreak


\section{Introduction}

\Voa{}s and their representation categories axiomatise two-dimensional conformal field theories in physics (see, e.g., \cite{FLM88,Kac98,LL04,FBZ04}). More recently, \voa{}s have also been shown to capture important aspects of higher-dimensional superconformal field theories.

\medskip

\noindent\emph{Background.} Important \voa{}s are those associated with infinite-dimensional Lie algebras. Among the best-studied examples are affine \voa{}s $V^k(\g)$, which are constructed from affinisations $\hat\g=\g\otimes\C[t,t^{-1}]\oplus\C K$ of finite-dimensional, simple Lie algebras $\g$, and the Virasoro \voa{} $V_\mathrm{Vir}(c,0)$, which is associated with the Virasoro Lie algebra $\mathcal{L}=\bigoplus_{s\in\Z}\C L_s\oplus\C C$. In both cases, the \voa{}s and their modules are constructed on certain induced modules of these infinite-dimensional Lie algebras.

Specialising to the case $\g=\sl_2=\langle e,h,f\rangle_\C$, it turns out that there are several interesting and well-studied connections between the representation theory of the affine \voa{}s $V^k(\sl_2)$ and the Virasoro \voa{}s $V_\mathrm{Vir}(c,0)$. Quantum Drinfeld-Sokolov reduction (or quantum Hamiltonian reduction) $\smash{H_\mathrm{DS}^0}$ \cite{FF90c,FF92} relates the universal affine \voa{} $V^k(\sl_2)$ at level $k\in\C\setminus\{-2\}$ to the universal Virasoro \voa{} $\smash{V_\mathrm{Vir}(c,0)=H_\mathrm{DS}^0(V^k(\sl_2))}$ at central charge $c=c(k)=1-6(k+1)^2/(k+2)$. It lifts to an exact functor between suitable representation categories.

For the admissible levels $k=k_{q,p}\coloneqq-2+q/p$ with $q,p\in\Z$, $q\geq2$, $p\geq1$ and $(p,q)=1$, the universal affine \voa{} $V^{-2+q/p}(\sl_2)$ is not simple, and we can consider the simple quotient $L_{-2+q/p}(\sl_2)$. In a similar way, for $c=c_{q,p}\coloneqq 1-6(p-q)^2/(pq)$ with $p,q\in\Z$, $p,q\geq2$ and $(p,q)=1$, the universal Virasoro \voa{} $V_\mathrm{Vir}(c_{q,p},0)$ is not simple. In that case, the simple quotient $L_\mathrm{Vir}(c_{q,p},0)$ is (strongly) rational. Quantum Drinfeld-Sokolov reduction also relates the simple quotients $L_{-2+q/p}(\sl_2)$ and $L_\mathrm{Vir}(c_{q,p},0)$, with the same values of $q$ and $p$, i.e.\ $c_{q,p}=c(k_{q,p})$.

Another well-known relation between \voa{}s for $\smash{\hat\sl_2}$ and for the Virasoro algebra $\mathcal{L}$ is given by the (generalised) Goddard-Kent-Olive coset construction, which realises the simple quotient $L_\mathrm{Vir}(c_{k+2,k+3},0)$ as a coset (or commutant) of $L_{k+1}(\sl_2)$ inside $L_k(\sl_2)\otimes L_1(\sl_2)$ \cite{GKO86}.

\medskip

\noindent\emph{Motivation.} In this paper, based on character identities, we investigate a new relation between $V^k(\sl_2)$ and $V_\mathrm{Vir}(c,0)$, between their simple quotients and between their modules. A special case of this appeared in \cite{BN22}. The starting point of this correspondence is the seemingly innocent observation (see \autoref{prop:gvsiuniversal}) that, by the Poincaré-Birkhoff-Witt theorem, there is a vector-space isomorphism
\begin{equation*}
\psi^\pm\colon V^k(\sl_2)\overset{\sim}{\longrightarrow}V_\mathrm{Vir}(c,0)
\end{equation*}
between the universal \voa{}s for $k\in\C\setminus\{-2\}$ and $c\in\C$ defined by mapping
\begin{equation}\label{eq:intro1}
e_{-i}\mapsto L_{-3i\pm1},\quad h_{-i}\mapsto L_{-3i},\quad f_{-i}\mapsto L_{-3i\mp1}
\end{equation}
for $i\in\Ns$ and $\vac\mapsto\vac$. This isomorphism $\psi^\pm$ maps a homogeneous vector of $h_0$-weight $f$ and $L_0$-weight $n$ to a vector of $L_0$-weight $n'=\mp f/2+3n$. Hence, it induces an identity of formal (Jacobi) $q$-characters (see \autoref{sec:reps} for the definition)
\begin{equation*}
\ch^*_{V^k(\sl_2)}(q^{\mp1/2},q^3)=\ch^*_{V_\mathrm{Vir}(c,0)}(q),
\end{equation*}
see \autoref{prop:chariduniversal}.

\medskip

\noindent\emph{Verma Modules.} It turns out that the above graded vector-space isomorphism can be vastly generalised, first of all, by also mapping $f_0\mapsto L_{-1}$, to a correspondence between Verma modules for $V^k(\sl_2)$ and $V_\mathrm{Vir}(c,0)$,
\begin{equation*}
\varphi^+\colon M_k(\mu)\overset{\sim}{\longrightarrow}M_\mathrm{Vir}(c,h);
\end{equation*}
see \autoref{prop:vermacorr}. An analogous version $\varphi^-$ for lowest-weight Verma modules $M^-_k(-\mu)$ for $V^k(\sl_2)$, mapping $e_0\mapsto L_{-1}$, exists as well; see \autoref{prop:vermacorr2}.

Now, returning to the correspondence for the universal \voa{}s, we recall that both \voa{}s can be constructed as quotients of Verma modules by singular vectors. Indeed, the universal affine \voa{} $V^k(\sl_2)$, which is nothing but the trivial Weyl module $V^k(0)$, is a proper quotient of the Verma module $M_k(0)$ by a submodule generated by one singular vector of $h_0$-weight $-2$ and $L_0$-weight $0$. Similarly, the universal Virasoro \voa{} $V_\mathrm{Vir}(c,0)$ is formed from the Verma module $M_\mathrm{Vir}(c,0)$ by taking the quotient corresponding to the singular vector $L_{-1}\vac$ of $L_0$-weight~$1$. We denote both these singular vectors by $\raisebox{.5pt}{\textcircled{\raisebox{-.9pt}{1}}}$.

Importantly, we notice that the graded vector-space isomorphism $\varphi^+$ respects the weights of these singular vectors, i.e.\ it maps the choice of singular vector in $M_k(0)$ to a vector of $L_0$-weight $1$ in $M_\mathrm{Vir}(c,0)$. This then implies the existence (nonuniquely) of a graded vector-space isomorphism $V^k(\sl_2)\overset{\sim}{\longrightarrow}V_\mathrm{Vir}(c,0)$, like $\psi^+$, between the universal \voa{}s (as quotients of Verma modules) and the corresponding character identity, and similarly for $\varphi^-$ and $\psi^-$.

As a warning, we point out that $\varphi^\pm$ does not map the singular vector exactly to the singular vector $L_{-1}\vac$, so that we cannot say that the map $\psi^\pm$ is induced by $\varphi^\pm$ on the quotients. However, $\psi^\pm$ and the linear map induced by $\varphi^\pm$ act in the same way on the level of graded components. We plan to investigate this in more detail in future work, which may also help to prove the classical freeness of certain \voa{}s \cite{EH21,AEH23}; see \autoref{rem:classicallyfree} and below.

\medskip

\noindent\emph{Singular Vectors.} The essential observation is that the compatibility of the linear isomorphism $\varphi^+\colon M_k(\mu)\overset{\sim}{\longrightarrow}M_\mathrm{Vir}(c,h)$ between the Verma modules with the weights of the singular vectors $\raisebox{.5pt}{\textcircled{\raisebox{-.9pt}{1}}}$ extends to nonzero $\mu\in\C$ and $h\in\C$ and to a second family $\raisebox{.5pt}{\textcircled{\raisebox{-.9pt}{2}}}$ of singular vectors (though, depending on the level $k\in\C\setminus\{-2\}$ and the central charge $c\in\C$, these singular vectors may or may not be present). This then allows us to obtain graded vector-space isomorphisms, in the same way as before, between irreducible modules for $V^k(\sl_2)$ and $V_\mathrm{Vir}(c,0)$. Indeed, as we explain in \autoref{sec:affine} and \autoref{sec:virasoro}, one sometimes needs two singular vectors to get from Verma modules to their irreducible quotients.

In this work, we restrict our study to two cases of levels for $V^k(\sl_2)$, namely the admissible levels $k=k_{q,p}=-2+q/p$ with $q\geq2$ and $p\geq1$ and certain nonadmissible levels $k=k_{1,p}=-2+1/p$ with $p\geq1$, which we call near-admissible. In both cases, in order to match the weights of the singular vectors $\raisebox{.5pt}{\textcircled{\raisebox{-.9pt}{1}}}$ and $\raisebox{.5pt}{\textcircled{\raisebox{-.9pt}{2}}}$ in the Verma modules for $V^k(\sl_2)$ with those in the Verma modules for $V_\mathrm{Vir}(c,0)$, we relate, assuming $3\nmid q$, modules for $k=k_{q,p}$ to modules for $c=c_{q,3p}$,
\begin{equation*}
(q,p)\mapsto(q,3p).
\end{equation*}
This relates the admissible levels $k_{q,p}$ to the rational cases with central charge $c_{q,3p}$ and, extending both formulae to $q=1$, the near-admissible levels $k_{1,p}$ to the logarithmic cases $c_{1,3p}$. We contrast this to the well-known Drinfeld-Sokolov reduction, which relates them for $k=k_{q,p}$ and $c=c_{q,p}$ with the same $p$ and~$q$.

\medskip

\noindent\emph{Admissible Levels.} First, in the admissible case, we consider irreducible modules $L_{-2+q/p}(\mu_{r,s})$ and $L_\mathrm{Vir}(c_{q,p},h_{r,s}^{q,p})$ for the simple quotient \voa{}s $L_{-2+q/p}(\sl_2)$ and $L_\mathrm{Vir}(c_{q,3p},0)$, as summarised in \autoref{fig:admrep} and \autoref{fig:ratrep}, respectively, recalling that only some modules for the universal \voa{}s are also modules for the simple quotients. The irreducible modules are obtained by taking quotients of Verma modules by two singular vectors.
\begin{introProp}[\ref{prop:vermasingadm}]
Let $q,p\in\Z$ with $q\geq2$, $p\geq1$ and $(p,q)=1$. Moreover, assume that $3\nmid q$. Let $r,s\in\Z$ with $0\leq r\leq q-2$ and $0\leq s\leq p-1$. Then there is a vector-space isomorphism
\begin{equation*}
\varphi^+\colon M_{-2+q/p}(\mu_{r,s})\overset{\sim}{\longrightarrow}M_\mathrm{Vir}(c_{q,3p},h_{r+1,3s+1}^{q,3p})
\end{equation*}
between Verma modules for $L_{-2+q/p}(\sl_2)$ and $L_\mathrm{Vir}(c_{q,3p},0)$ that maps a vector of $(h_0,L_0)$-weight $(f,n)$ to a vector of $L_0$-weight $n'=-f/2+3n$. The singular vectors $\raisebox{.5pt}{\textcircled{\raisebox{-.9pt}{1}}}$ and $\raisebox{.5pt}{\textcircled{\raisebox{-.9pt}{2}}}$ in $\smash{M_{-2+q/p}(\mu_{r,s})}$ are mapped to vectors in $\smash{M_\mathrm{Vir}(c_{q,3p},h_{r+1,3s+1}^{q,3p})}$ of the same $L_0$-weights as those of the two singular vectors $\raisebox{.5pt}{\textcircled{\raisebox{-.9pt}{1}}}$ and $\raisebox{.5pt}{\textcircled{\raisebox{-.9pt}{2}}}$, respectively, in $\smash{M_\mathrm{Vir}(c_{q,3p},h_{r+1,3s+1}^{q,3p})}$.
\end{introProp}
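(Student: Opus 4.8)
The map in question is the Poincaré--Birkhoff--Witt isomorphism $\varphi^+$ of \autoref{prop:vermacorr}, defined on ordered PBW monomials by $f_0\mapsto L_{-1}$, $e_{-i}\mapsto L_{-3i+1}$, $h_{-i}\mapsto L_{-3i}$, $f_{-i}\mapsto L_{-3i-1}$ for $i\geq1$, together with $v_{\mu_{r,s}}\mapsto v_{c,h}$ on highest-weight vectors. My plan is to first record that the generating set $\{f_0\}\cup\{e_{-i},h_{-i},f_{-i}:i\geq1\}$ maps bijectively onto $\{L_{-n}:n\geq1\}$, so that $\varphi^+$ is a vector-space isomorphism by PBW, and that on the generators it sends $(h_0,L_0)$-weight $(f,n)$ to $L_0$-weight $-f/2+3n$ (e.g. $e_{-i}$ has $(f,n)=(2,i)$ and $-1+3i$ is the weight of $L_{-3i+1}$). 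Additivity over products then reduces the whole statement to two checks: compatibility of the highest-weight vectors, and compatibility of the singular-vector positions. The first requires $h^{q,3p}_{r+1,3s+1}=-\mu_{r,s}/2+3\,h^{\mathrm{aff}}(\mu_{r,s})$, where $h^{\mathrm{aff}}(\mu)=\mu(\mu+2)/(4(k+2))=p\mu(\mu+2)/(4q)$ is the Sugawara weight; writing $\mu_{r,s}=r-sq/p$ and setting $X=pr-qs$, both sides collapse to $X(3X+6p-2q)/(4pq)$, a routine polynomial identity. It is here, and in the appeal to the Kac table below, that I would invoke the hypotheses: $(p,q)=1$ together with $3\nmid q$ gives $(q,3p)=1$, so $c_{q,3p}$ is a genuine rational Virasoro central charge and, since $1\leq r+1\leq q-1$ and $1\leq 3s+1\leq 3p-1$, the pair $(r+1,3s+1)$ labels a box in the corresponding Kac table.

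Next I would locate the two singular vectors on each side. On the Virasoro side this is the standard Feigin--Fuks/Kac-determinant analysis: for $c_{q,3p}$ and $h^{q,3p}_{r+1,3s+1}$ (with $(u,v)=(q,3p)$, $(r',s')=(r+1,3s+1)$), the maximal proper submodule of $M_\mathrm{Vir}(c_{q,3p},h^{q,3p}_{r+1,3s+1})$ is generated by two singular vectors, the first $\raisebox{.5pt}{\textcircled{\raisebox{-.9pt}{1}}}$ at level $r's'=(r+1)(3s+1)$ and the second $\raisebox{.5pt}{\textcircled{\raisebox{-.9pt}{2}}}$ at level $(u-r')(v-s')=(q-r-1)(3p-3s-1)$, coming from the weight shifts $h_{-r',s'}-h_{r',s'}=r's'$ and $h_{r',2v-s'}-h_{r',s'}=(u-r')(v-s')$. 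On the affine side I would use the Kac--Kazhdan criterion (equivalently the Malikov--Feigin--Fuks construction): for the admissible $\mu_{r,s}=r-s(k+2)$ at level $k=-2+q/p$, the positive real roots pairing integrally with $\lambda+\rho$ form a rank-two system with simple roots $\beta_1=\alpha_1+s\delta$ and $\beta_2=-\alpha_1+(p-s)\delta$, satisfying $\langle\lambda+\rho,\beta_1^\vee\rangle=r+1$ and $\langle\lambda+\rho,\beta_2^\vee\rangle=q-r-1$. The two generating singular vectors then sit at $\lambda-(r+1)\beta_1$ and $\lambda-(q-r-1)\beta_2$; since $\alpha_1$ carries $h_0$-weight $2$ and conformal weight $0$ while $\delta$ carries $h_0$-weight $0$ and conformal weight $1$, their relative $(h_0,L_0)$-weights are $(-2(r+1),(r+1)s)$ and $(2(q-r-1),(q-r-1)(p-s))$.

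Finally I would push these through $\varphi^+$ and compare: the first affine singular vector lands at relative $L_0$-weight $-\tfrac12(-2(r+1))+3(r+1)s=(r+1)(3s+1)$, and the second at $-\tfrac12(2(q-r-1))+3(q-r-1)(p-s)=(q-r-1)(3p-3s-1)$, which are exactly the levels of $\raisebox{.5pt}{\textcircled{\raisebox{-.9pt}{1}}}$ and $\raisebox{.5pt}{\textcircled{\raisebox{-.9pt}{2}}}$ on the Virasoro side. Combined with the highest-weight matching from the first paragraph, this upgrades the relative statement to equality of \emph{absolute} $L_0$-weights, proving the claim with the pairing $\raisebox{.5pt}{\textcircled{\raisebox{-.9pt}{1}}}\leftrightarrow\raisebox{.5pt}{\textcircled{\raisebox{-.9pt}{1}}}$, $\raisebox{.5pt}{\textcircled{\raisebox{-.9pt}{2}}}\leftrightarrow\raisebox{.5pt}{\textcircled{\raisebox{-.9pt}{2}}}$.

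I expect the main obstacle to be the affine input: correctly extracting the two \emph{generators} of the maximal submodule of the admissible $\widehat{\sl}_2$-Verma module and their precise $(h_0,L_0)$-weights, as opposed to the infinitely many descendant singular vectors. The Virasoro side and the final arithmetic are routine, so the conceptual weight of the argument lies in recognising that the reparametrisation $(q,p)\mapsto(q,3p)$ is forced: it is the unique way to match simultaneously the conformal weight of the highest-weight vector and both singular-vector levels under the rigid grading rule $n'=-f/2+3n$, and $3\nmid q$ is precisely the condition making $(q,3p)$ coprime so that the target is a well-defined rational Virasoro datum.
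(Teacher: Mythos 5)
Your proposal is correct and follows essentially the same route as the paper: the PBW-defined map $\varphi^+$, the grading rule $n'=-f/2+3n$ checked on generators, the vanishing-offset identity $h^{q,3p}_{r+1,3s+1}=-\mu_{r,s}/2+3\,\mu_{r,s}(\mu_{r,s}+2)/(4(k+2))$, and a comparison of the relative weights of the two generating singular vectors on each side — your Kac--Kazhdan/Malikov--Feigin--Fuks and Feigin--Fuchs derivations merely supply in detail the standard singular-vector data that the paper quotes from its \autoref{sec:reps}. One worthwhile remark: your level for singular vector $\raisebox{.5pt}{\textcircled{\raisebox{-.9pt}{2}}}$, namely $(q-r-1)(3p-3s-1)$, is the correct one, and the paper's displayed value $(q-r-1)(3p-3s+1)$ is a typo (propagated consistently to both sides of its comparison, so its conclusion is unaffected): indeed $(q-\tilde{r})(3p-\tilde{s})$ with $(\tilde{r},\tilde{s})=(r+1,3s+1)$ equals $(q-r-1)(3p-3s-1)$, which matches the image weight $-\tfrac{1}{2}\cdot2(q-r-1)+3(p-s)(q-r-1)$ under $\varphi^+$.
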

Because the weights of the singular vectors are preserved (although not necessarily the singular vectors themselves), the above character identity for the Verma modules descends to a character identity for the irreducible modules.
\begin{introProp}[\ref{cor:charid} (and \autoref{rem:inducedGVSI})]
Let $q,p\in\Z$ with $q\geq2$, $p\geq1$ and $(p,q)=1$. Moreover, assume that $3\nmid q$. Let $r,s\in\Z$ with $0\leq r\leq q-2$ and $0\leq s\leq p-1$. Then the character identity
\begin{equation*}
\ch^*_{L_{-2+q/p}(\mu_{r,s})}(q^{-1/2},q^3)=\ch^*_{L_\mathrm{Vir}(c_{q,3p},h_{r+1,3s+1}^{q,3p})}(q)
\end{equation*}
for irreducible modules for $L_{-2+q/p}(\sl_2)$ and $L_\mathrm{Vir}(c_{q,3p},0)$ holds. Equivalently, there exists a graded vector-space isomorphism
\begin{equation*}
\chi^+\colon L_{-2+q/p}(\mu_{r,s})\overset{\sim}{\longrightarrow}L_\mathrm{Vir}(c_{q,3p},h_{r+1,3s+1}^{q,3p}).
\end{equation*}
\end{introProp}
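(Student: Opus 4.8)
The plan is to lift the Verma-module character identity supplied by \autoref{prop:vermasingadm} to the irreducible quotients by transporting the full singular-vector structure through the underlying graded vector-space isomorphism $\varphi^+$. Write $L_{-2+q/p}(\mu_{r,s}) = M_{-2+q/p}(\mu_{r,s})/J^{\mathrm{aff}}$ and $L_\mathrm{Vir}(c_{q,3p},h^{q,3p}_{r+1,3s+1}) = M_\mathrm{Vir}(c_{q,3p},h^{q,3p}_{r+1,3s+1})/J^{\mathrm{Vir}}$, where $J^{\mathrm{aff}}$ and $J^{\mathrm{Vir}}$ are the respective maximal proper submodules, each generated by the two singular vectors. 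Since \autoref{prop:vermasingadm} already yields $\ch^*_{M_{-2+q/p}(\mu_{r,s})}(q^{-1/2},q^3)=\ch^*_{M_\mathrm{Vir}(c_{q,3p},h^{q,3p}_{r+1,3s+1})}(q)$, it suffices to establish the matching identity $\ch^*_{J^{\mathrm{aff}}}(q^{-1/2},q^3)=\ch^*_{J^{\mathrm{Vir}}}(q)$ for the maximal submodules, after which the claim for the quotients follows by subtraction.

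First I would recall that, for both admissible affine $\sl_2$ Verma modules and Virasoro minimal-model Verma modules, the embedding diagram is the standard linked two-sided chain, so that the graded character of the maximal submodule is computed by a BGG-type (Felder/Rocha-Caridi) resolution: an alternating sum of Verma-module characters indexed by the cascade of singular vectors. Because such a resolution expresses $\ch J$ purely in terms of the $L_0$-weights (and, on the affine side, $h_0$-weights) of the singular vectors in the chain, the identity for the submodules reduces to the assertion that the weight correspondence $(f,n)\mapsto n'=-f/2+3n$ carries the affine chain of singular-vector weights onto the Virasoro chain term by term. The top of each chain is exactly the pair of singular vectors matched by \autoref{prop:vermasingadm}.

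To propagate this from the top to the whole cascade, I would argue recursively: the sub-Verma module generated by a first-level singular vector is again a Verma module, whose highest weight is a reflected and translated version of $\mu_{r,s}$ (respectively of $h^{q,3p}_{r+1,3s+1}$), and to which the Verma-module correspondence $\varphi^+$ of \autoref{prop:vermacorr} applies verbatim, since that statement holds for arbitrary highest weights. Matching the next level of singular vectors then reduces, at each step, to the same weight bookkeeping as in \autoref{prop:vermasingadm}, carried out for the shifted weights. Alternatively, one may bypass the recursion entirely and compare the explicit Kac-Wakimoto admissible characters with the Rocha-Caridi minimal-model characters directly under the substitution $z\mapsto q^{-1/2}$, $q\mapsto q^3$, which turns the claim into an identity between ratios of theta functions.

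The main obstacle is precisely this propagation along the full cascade: \autoref{prop:vermasingadm} matches only the two top singular vectors, whereas the character of the irreducible quotient is governed by the entire infinite chain, so I must verify that the weight map $n'=-f/2+3n$ intertwines the two chains at every level. Granting this, the submodule characters agree, hence so do the characters of the irreducible quotients, which proves the displayed identity. Finally, since over a suitably graded vector space equality of the specialised characters is equivalent to the existence of a grade-preserving bijection, the character identity is equivalent to the existence of the graded vector-space isomorphism $\chi^+$, as recorded in \autoref{rem:inducedGVSI}.
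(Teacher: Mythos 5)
Your main route has a genuine gap exactly where you flag it: propagating the weight matching from the two top singular vectors to the entire embedding diagram is the substantive content of the claim, and it is left as ``granting this.'' The recursion you sketch does not close it, because \autoref{prop:vermasingadm} is only established for highest weights $\mu_{r,s}$ with $0\leq r\leq q-2$ and $0\leq s\leq p-1$; the sub-Verma modules at deeper levels of the cascade have reflected and translated highest weights that leave this fundamental range, so each level requires a fresh computation of the singular-vector weights (the affine Weyl dot-action orbit on one side versus the Virasoro $h+rs$-type pattern on the other) together with a fresh check that $(f,n)\mapsto -f/2+3n$ intertwines them. Note also that \autoref{prop:vermacorr} matches Verma characters only once the Virasoro highest weight is fixed as $h(k,\mu)$; along the chain you would have to verify that this forced choice at each node agrees with the weight demanded by the minimal-model embedding diagram at the corresponding node --- which is the same unverified level-by-level intertwining restated. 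Without it, the alternating BGG/Rocha-Caridi sums need not match term by term, so the subtraction step is not yet justified.

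The paper sidesteps the cascade entirely and does what you offer only as an aside: it explicitly states that, ``rather than verifying this by studying the graded structure of the maximal ideals,'' it directly substitutes $(w,q)\mapsto(q^{-1/2},q^3)$ into the Kac-Wakimoto character of $L_{-2+q/p}(\mu_{r,s})$ and manipulates the resulting theta-function expression into the Rocha-Caridi character of $L_\mathrm{Vir}(c_{q,3p},h_{r+1,3s+1}^{q,3p})$ with $\tilde{r}=r+1$, $\tilde{s}=3s+1$. So your fallback route is the paper's actual proof, but you do not execute the computation; as written, neither of your two routes is complete. Your closing equivalence (character identity if and only if a graded vector-space isomorphism exists) is sound, since the simultaneous $(h_0,L_0)$-eigenspaces are finite-dimensional and, in a highest-weight module, only finitely many bidegrees $(f,n)$ contribute to each target degree $n'=-f/2+3n$.
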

We point out (see \autoref{rem:inducedGVSI}) that such a vector-space isomorphism $\chi^+$, besides not being unique, would be difficult to write down explicitly, in general, though we do so in the special case of $(q,p)=(4,1)$, as we explain below.

Analogous statements for lowest-weight Verma modules for $\smash{L_{-2+q/p}(\sl_2)}$ based on the map $\varphi^-$ (and $\chi^-$) hold; see \autoref{prop:lowestvermasingadm} and \autoref{cor:lowestcharid}.

In the special case of $r=s=0$, we obtain graded vector-space isomorphisms $\chi^\pm$ and character identities between the simple \voa{}s themselves. They are related to the ones above for the universal \voa{}s by taking the quotient by the ideal generated by the singular vector $\raisebox{.5pt}{\textcircled{\raisebox{-.9pt}{2}}}$.

\medskip

\noindent\emph{Nonadmissible Levels.} In the near-admissible case, we relate irreducible modules for $\smash{L_{-2+1/p}(\sl_2)=V^{-2+1/p}(\sl_2)}$ with $p\in\Ns$ to those of the logarithmic $(1,3p)$-minimal models $\smash{L_\mathrm{Vir}(c_{1,3p},0)=V_\mathrm{Vir}(c_{1,3p},0)}$ \cite{FFT11}, as summarised in \autoref{fig:nearadmrep} and \autoref{fig:logrep}. In contrast to the admissible case, there is now only (at most) one family~$\raisebox{.5pt}{\textcircled{\raisebox{-.9pt}{1}}}$ of singular vectors in the Verma modules on both sides.
\begin{introProp}[\ref{prop:vermasingnear}]
Let $p\in\Ns$. Let $\mu\in\C$. There is a vector-space isomorphism
\begin{equation*}
\varphi^+\colon M_{-2+1/p}(\mu)\overset{\sim}{\longrightarrow}M_\mathrm{Vir}(c_{1,3p},\mu(-2+3p(\mu+2))/4)
\end{equation*}
between Verma modules for $V^{-2+1/p}(\sl_2)$ and $V_\mathrm{Vir}(c_{1,3p},0)$ that maps a vector of $(h_0,L_0)$-weight $(f,n)$ to a vector of $L_0$-weight $n'=-f/2+3n$. If $\mu\notin\N$, both sides are already irreducible.

If $\mu=\mu_{r,0}=r$ for $r\in\N$, this becomes a vector-space isomorphism
\begin{equation*}
\varphi^+\colon M_{-2+1/p}(\mu_{r,0})\overset{\sim}{\longrightarrow}M_\mathrm{Vir}(c_{1,3p},h_{r+1,1}^{1,3p})
\end{equation*}
that maps the singular vector $\raisebox{.5pt}{\textcircled{\raisebox{-.9pt}{1}}}$ in $\smash{M_{-2+1/p}(\mu_{r,0})}$ to a vector in $\smash{M_\mathrm{Vir}(c_{1,3p},h_{r+1,1}^{1,3p})}$ of the same $L_0$-weight as the singular vector $\raisebox{.5pt}{\textcircled{\raisebox{-.9pt}{1}}}$ in $M_\mathrm{Vir}(c_{1,3p},h_{r+1,1}^{1,3p})$.
\end{introProp}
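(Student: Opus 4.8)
The plan is to obtain the statement by specialising the general Verma-module correspondence \autoref{prop:vermacorr} to the level $k=-2+1/p$ and then reading off the two gradings. Recall that $\varphi^+$ is the assignment $f_0\mapsto L_{-1}$ and $e_{-i}\mapsto L_{-3i+1}$, $h_{-i}\mapsto L_{-3i}$, $f_{-i}\mapsto L_{-3i-1}$ for $i\geq1$; since these images exhaust $L_{-1},L_{-2},L_{-3},\dots$ bijectively, the Poincaré–Birkhoff–Witt theorem promotes this bijection of generators to a linear isomorphism of the two Verma modules, and because $n'=-f/2+3n$ is linear in $(f,n)$ it is additive over PBW monomials and hence holds on all of $M_{-2+1/p}(\mu)$. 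First I would fix the image highest weight: the Sugawara conformal weight of the highest-weight vector is $h_{\mathrm{aff}}=\mu(\mu+2)/(4(k+2))=p\,\mu(\mu+2)/4$, so its image has $L_0$-weight $-\mu/2+3h_{\mathrm{aff}}=\mu(-2+3p(\mu+2))/4$, which is the asserted $h$. I would stress that this is only a graded vector-space isomorphism, not a conformal one (the two central charges, $3-6p$ on the affine side and $c_{1,3p}$ on the Virasoro side, differ); the value $c_{1,3p}$ is singled out precisely because it makes $h(\mu)$ land on the Kac curve at the right labels, as one sees from the identity $12p\,h(\mu)=(3p(\mu+1)-1)^2-(3p-1)^2$, which for $\mu=r\in\N$ reads $h=h_{r+1,1}^{1,3p}$.

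For the irreducibility assertion I would appeal to the standard reducibility criteria, namely the Kac–Kazhdan equations for $M_{-2+1/p}(\mu)$ and the Kac determinant for $M_{\mathrm{Vir}}(c_{1,3p},h)$. The degree-zero affine singular vector is $\raisebox{.5pt}{\textcircled{\raisebox{-.9pt}{1}}}=f_0^{\mu+1}\lvert\mu\rangle$, present exactly when $\mu\in\N$; the essential feature of the near-admissible level $q=1$ that I would exploit is that, unlike the admissible case, no independent second family $\raisebox{.5pt}{\textcircled{\raisebox{-.9pt}{2}}}$ survives — in the Feigin–Fuks classification the $(1,3p)$-Verma modules lie in the chain case, so the maximal submodule is generated by $\raisebox{.5pt}{\textcircled{\raisebox{-.9pt}{1}}}$ alone. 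Consequently, for $\mu\notin\N$ one finds no singular vector on either side and both Verma modules are irreducible. The step needing genuine care here is the exact matching of the two reducibility loci, i.e. checking that the affine Kac–Kazhdan lines and the Virasoro Kac curve $h=h_{r,s}^{1,3p}$ meet the image of $\mu\mapsto h(\mu)$ at the same parameters; this is where the rescaling $(q,p)\mapsto(q,3p)$ is indispensable, and it is the part I expect to require the closest bookkeeping.

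The heart of the matter is the singular-vector comparison for $\mu=r\in\N$. Here $\raisebox{.5pt}{\textcircled{\raisebox{-.9pt}{1}}}=f_0^{r+1}\lvert r\rangle$ sits at conformal degree $0$ (the zero mode $f_0$ commutes with $L_0$) but at $h_0$-weight $-r-2$; under $\varphi^+$ the $h_0$-weight drop of $2(r+1)$ is converted by the $-f/2$ term into an $L_0$-increase of $r+1$, so $\raisebox{.5pt}{\textcircled{\raisebox{-.9pt}{1}}}$ lands at $L_0$-weight $h+(r+1)$. On the Virasoro side $M_{\mathrm{Vir}}(c_{1,3p},h_{r+1,1}^{1,3p})$ carries its family-$\raisebox{.5pt}{\textcircled{\raisebox{-.9pt}{1}}}$ singular vector at level $(r+1)\cdot1=r+1$, i.e. again at $L_0$-weight $h+(r+1)$; I would record the common value $(3pr^2+6pr+2r+4)/4$ as a cross-check. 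As a bonus, the same computation applied to the weight $-r-2$ yields $h(-r-2)=h(r)+(r+1)$, so $\varphi^+$ carries the affine sub-Verma generated by $\raisebox{.5pt}{\textcircled{\raisebox{-.9pt}{1}}}$ onto the Virasoro sub-Verma generated by its $\raisebox{.5pt}{\textcircled{\raisebox{-.9pt}{1}}}$; this self-similarity is what will later let the character identity descend to the irreducible quotients. The main obstacle I anticipate is therefore not the weight arithmetic, which is short, but correctly locating the Virasoro singular vector — reading off the Kac labels $(r+1,1)$ and the chain structure so that $\raisebox{.5pt}{\textcircled{\raisebox{-.9pt}{1}}}$ generates the maximal submodule — together with pinning down the reducibility loci for general $\mu$.
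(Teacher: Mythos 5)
Your proposal takes essentially the same route as the paper: there, \autoref{prop:vermasingnear} is obtained exactly as you describe, by specialising the PBW-based isomorphism $\varphi^+$ of \autoref{prop:vermacorr} to $k=-2+1/p$, so that $h(k,\mu)=\mu(3\mu+2-2k)/(4(k+2))=\mu(-2+3p(\mu+2))/4$, and then comparing weights of singular vectors; your bookkeeping (the singular vector at $(h_0,L_0)$-weight $(-r-2,\ell^{1,p}_{r,0})$ landing at $L_0$-weight $h^{1,3p}_{r+1,1}+(r+1)$, common value $(3pr^2+6pr+2r+4)/4$) and your Kac-curve identity $12p\,h(\mu)=(3p(\mu+1)-1)^2-(3p-1)^2$ agree with the paper's computations, and the irreducibility statements are exactly those the paper quotes from the background on near-admissible $\hat{\sl}_2$- and $(1,3p)$-Virasoro modules in \autoref{sec:reps}.

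One genuine caution about your ``bonus'' step: the claim that $\varphi^+$ carries the affine sub-Verma generated by $\raisebox{.5pt}{\textcircled{\raisebox{-.9pt}{1}}}$ \emph{onto} the Virasoro sub-Verma generated by its $\raisebox{.5pt}{\textcircled{\raisebox{-.9pt}{1}}}$ is false for $r\geq1$, and the paper explicitly warns against exactly this (``it is not generally true that $\varphi^+$\ldots actually maps singular vectors to singular vectors''). Indeed, $\varphi^+$ sends the affine submodule (spanned by PBW monomials with $f_0$-exponent $\geq r+1$) onto the span of PBW monomials with $L_{-1}$-exponent $\geq r+1$, whereas the Virasoro singular vector at level $r+1$ has the form $L_{-1}^{r+1}w+\cdots$ with admixtures (already $L_{-2}$-terms at $r=1$) lying outside that span; the two subspaces merely share graded dimensions. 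The correct weaker statement — equality of characters of the submodules, which your identity $h(-r-2)=h(r)+(r+1)$ does establish — is all that holds, and it is why the paper proves the descent to irreducible quotients not via submodule matching but by the direct character computation in \autoref{cor:charid2} and, independently, by the explicitly constructed map $\psi^\pm$ in \autoref{prop:gvsi}. Relatedly, ``irreducible for all $\mu\notin\N$'' should be read generically, as in the paper's own \autoref{sec:reps}: at $t=1/p$ the Kac--Kazhdan locus also meets, e.g., $\mu=-2$ (take $m=1$, $n=2p$), where consistently $h(-2)=1=h^{1,3p}_{1,6p+1}$ lies on the Virasoro Kac curve — a subtlety your remark about matching the two reducibility loci correctly anticipates, though neither you nor the paper excises these points from the statement.
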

Again, because the weights of the singular vectors are preserved, this implies a character identity for the irreducible modules.
\begin{introProp}[\ref{cor:charid2} (and \autoref{prop:gvsi})]
Let $p\in\Ns$. Let $\mu\in\C\setminus\N$. Then
\begin{equation*}
\ch^*_{L_{-2+1/p}(\mu)}(q^{-1/2},q^3)=\ch^*_{L_\mathrm{Vir}(c_{1,3p},\mu(-2+3p(\mu+2))/4)}(q)
\end{equation*}
where $L_\mathrm{Vir}(c_{1,3p},\mu(-2+3p(\mu+2))/4)=M_\mathrm{Vir}(c_{1,3p},\mu(-2+3p(\mu+2))/4)$ and $L_{-2+1/p}(\mu)=M_{-2+1/p}(\mu)$ are irreducible Verma modules for $V^{-2+1/p}(\sl_2)$ and $V_\mathrm{Vir}(c_{1,3p},0)$, respectively.

On the other hand, let $r\in\N$. Then
\begin{equation*}
\ch^*_{L_{-2+1/p}(\mu_{r,0})}(q^{-1/2},q^3)=\ch^*_{M_{r+1,1;3p}}(q),
\end{equation*}
where $\smash{L_{-2+1/p}(\mu_{r,0})=V^{-2+1/p}(r)}$ is an irreducible Weyl module for $V^{-2+1/p}(\sl_2)$ and $\smash{M_{r+1,1;3p}=L_\mathrm{Vir}(c_{1,3p},h_{r+1,1}^{1,3p})}$ an irreducible module for $V_\mathrm{Vir}(c_{1,3p},0)$. Equivalently, there is a graded vector-space isomorphism
\begin{equation*}
\psi^+\colon L_{-2+1/p}(\mu_{r,0})\overset{\sim}{\longrightarrow}M_{r+1,1;3p}.
\end{equation*}
\end{introProp}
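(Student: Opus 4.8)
The plan is to obtain both identities by descending from the Verma-module correspondence of \autoref{prop:vermasingnear} to the irreducible quotients, using that in the near-admissible case only the single family \raisebox{.5pt}{\textcircled{\raisebox{-.9pt}{1}}} of singular vectors occurs. First I would recall the elementary mechanism behind all these identities, already used for the universal \voa{}s in \autoref{prop:chariduniversal}: since $\ch^*$ records the $(h_0,L_0)$-bigrading, the specialisation $(z,q)\mapsto(q^{-1/2},q^3)$ sends a bigraded component of weight $(f,n)$ to $q^{-f/2+3n}=q^{n'}$, which is exactly the weight transformation realised by $\varphi^+$. Consequently, any graded vector-space isomorphism compatible with this transformation yields the corresponding character identity and, conversely, an equality of specialised characters forces the graded dimensions to agree and hence produces such an isomorphism; this last equivalence is the content of \autoref{prop:gvsi}.

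For the first assertion, with $\mu\in\C\setminus\N$, \autoref{prop:vermasingnear} already guarantees that both Verma modules are irreducible, so that $L_{-2+1/p}(\mu)=M_{-2+1/p}(\mu)$ and likewise on the Virasoro side. Thus $\varphi^+$ is itself the required graded vector-space isomorphism, and the character identity is immediate from the mechanism above.

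For the second assertion, with $\mu=\mu_{r,0}=r\in\N$, the Verma modules are reducible and I would pass to the irreducible quotients through the single singular vector \raisebox{.5pt}{\textcircled{\raisebox{-.9pt}{1}}}. On the affine side, $L_{-2+1/p}(r)=V^{-2+1/p}(r)$ is the quotient of $M_{-2+1/p}(r)$ by the submodule $N$ generated by \raisebox{.5pt}{\textcircled{\raisebox{-.9pt}{1}}}, namely $f_0^{\,r+1}v$, a vector of $(h_0,L_0)$-weight $(-r-2,r(r+2)p/4)$; since a nonzero homomorphism of Verma modules over $\hat\sl_2$ is injective, $N\cong M_{-2+1/p}(-r-2)$, and hence $\ch^*_{L_{-2+1/p}(r)}=\ch^*_{M_{-2+1/p}(r)}-\ch^*_{M_{-2+1/p}(-r-2)}$. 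On the Virasoro side, $M_{r+1,1;3p}$ is the quotient of $M_\mathrm{Vir}(c_{1,3p},h_{r+1,1}^{1,3p})$ by the submodule generated by its level-$(r+1)$ singular vector, which (again by injectivity of Verma homomorphisms) is isomorphic to $M_\mathrm{Vir}(c_{1,3p},h_{r+1,-1}^{1,3p})$, so that $\ch^*_{M_{r+1,1;3p}}=\ch^*_{M_\mathrm{Vir}(c_{1,3p},h_{r+1,1}^{1,3p})}-\ch^*_{M_\mathrm{Vir}(c_{1,3p},h_{r+1,-1}^{1,3p})}$. The two leading terms match under the specialisation by the $\mu=r$ case of \autoref{prop:vermasingnear} (with the routine Kac-formula check $r(-2+3p(r+2))/4=h_{r+1,1}^{1,3p}$), and the two submodule terms match by the $\mu=-r-2$ case, since $(-r-2)(-2+3p(-r-2+2))/4=(r+2)(3pr+2)/4=h_{r+1,-1}^{1,3p}$, which is also the $L_0$-weight assigned to \raisebox{.5pt}{\textcircled{\raisebox{-.9pt}{1}}} by $\varphi^+$. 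Subtracting the two displayed identities gives the claimed character identity, whence $\psi^+$ exists by the equivalence recalled above.

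The step I expect to be the crux is the structural input that, on each side, the maximal proper submodule is generated by the single primitive singular vector \raisebox{.5pt}{\textcircled{\raisebox{-.9pt}{1}}} and is therefore isomorphic to one Verma module, so that the irreducible character is the clean difference $\ch^*M-\ch^*M'$ rather than an alternating sum over a longer embedding pattern. (It is harmless that $M'$ may itself be reducible, as only its graded dimension enters the computation.) On the affine side this is precisely the irreducibility of the Weyl module $V^{-2+1/p}(r)$, and on the Virasoro side it is the embedding structure of the logarithmic $(1,3p)$-model Verma modules $M_\mathrm{Vir}(c_{1,3p},h_{r+1,1}^{1,3p})$ \cite{FFT11}; I would establish or cite both in \autoref{sec:affine} and \autoref{sec:virasoro}. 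This single-singular-vector feature is exactly what distinguishes the near-admissible case from the admissible one, where the second family \raisebox{.5pt}{\textcircled{\raisebox{-.9pt}{2}}} forces an inclusion--exclusion over two singular vectors.
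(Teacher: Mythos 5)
Your proposal is correct, and while it lands on the same underlying mechanism as the paper, the actual verification is organised differently. The paper proves \autoref{cor:charid2} by brute force: it substitutes $(w,q)\mapsto(q^{1/2},q^3)$ into the recorded closed-form Weyl-module character $(w^r-w^{-(r+2)})\,q^{\ell_{r,0}^{1,p}}/[(w^2q;q)_\infty(q;q)_\infty(w^{-2};q)_\infty]$ (the $w\mapsto w^{-1}$ symmetry of this character makes the choice of sign immaterial) and manipulates $q$-Pochhammer factors until it matches $(1-q^{r+1})\,q^{h_{r+1,1}^{1,3p}}/(q;q)_\infty$; it then gives a second, independent proof by constructing $\psi^\pm$ explicitly on PBW bases in \autoref{prop:gvsi}. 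You instead run the inclusion--exclusion over the single singular vector \raisebox{.5pt}{\textcircled{\raisebox{-.9pt}{1}}}, writing each irreducible character as a clean difference of two Verma characters and invoking the Verma-level identity of \autoref{prop:vermacorr}/\autoref{prop:vermasingnear} twice, at $\mu=r$ and $\mu=-r-2$. This is the same content in disguise --- the factors $w^r-w^{-(r+2)}$ and $1-q^{r+1}$ in the paper's formulas are exactly your two-term differences, and your weight checks are right: $h(k_{1,p},-r-2)=(r+2)(3pr+2)/4=h_{r+1,1}^{1,3p}+(r+1)$, and $\ell_{r,0}^{1,p}=\ell_{-r-2,0}^{1,p}$, so both embeddings preserve the $L_0$-grading with no shift --- but your packaging makes transparent why preservation of the weight of \raisebox{.5pt}{\textcircled{\raisebox{-.9pt}{1}}} suffices, where the paper only gestures at this before computing. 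The price is the structural input you correctly isolate as the crux: irreducibility of the Weyl modules $V^{-2+1/p}(r)$ \cite{Cre17} and the single-singular-vector embedding structure of the $(1,3p)$ Verma modules \cite{FFT11}; the paper needs exactly the same inputs, hidden inside its quoted character formulas.

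Two small remarks. First, your parenthetical that the submodule $M'$ may itself be reducible is not just harmless but necessary: both $M_{-2+1/p}(-r-2)$ and $M_\mathrm{Vir}(c_{1,3p},h_{r+1,1}^{1,3p}+r+1)$ are in fact reducible (e.g.\ $h_{r+1,1}^{1,3p}+(r+1)=h_{1,3p(r+2)+1}^{1,3p}$ is a Kac-reducible weight, and Kac--Kazhdan applies analogously on the affine side, via $\mu+1=-r-1=1-p(r+2)(k+2)$). So on this point you are, if anything, more careful than the blanket phrase ``if $\mu\notin\N$, then both sides are already irreducible'' in \autoref{prop:vermasingnear}, which your first-part argument (exactly like the paper's) takes at face value and which is really only valid for generic such $\mu$. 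Second, the equivalence ``character identity $\Leftrightarrow$ existence of a graded vector-space isomorphism'' that you attribute to \autoref{prop:gvsi} is really the dimension-count remark surrounding \eqref{eq:psiplusminus}; \autoref{prop:gvsi} is the stronger statement that an explicit such map exists on PBW bases, which your argument does not reproduce --- though the ``equivalently'' clause of the statement only requires existence, so nothing is missing.
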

Analogous results for the map $\varphi^-$ (and $\psi^-$) from lowest-weight Verma modules for $V^{-2+1/p}(\sl_2)$ hold; see \autoref{prop:lowestvermasingnear}, \autoref{cor:lowestcharid2} and \autoref{prop:gvsi}.

For $r=0$ we obtain the graded vector-space isomorphisms $\psi^\pm$ and the character identities between the \voa{}s themselves. They coincide with the ones described above for universal \voa{}s (which are simple here).

In the near-admissible case (in contrast to the admissible case, where there is more than one singular vector), one can construct a graded vector-space isomorphism $\psi^\pm$ explicitly; see \autoref{prop:gvsi}. This was done in \cite{BN22} in the special case of $p=2$, but their proof generalises straightforwardly. This result partially served as motivation for this paper.

\medskip

\noindent\emph{Generalisations.} We also propose how to extend the vector-space isomorphisms to modules not necessarily having finite-dimensional weight spaces or being graded at all, in particular relaxed highest-weight modules and Whittaker modules.

The relaxed modules $\smash{\mathcal{E}^{\sl_2}_{\lambda,\chi}}$ \cite{AM95,RW15,CMY24}, defined in \eqref{eq:real-relaxed}, are $\N$-gradable $\smash{V^k(\sl_2)}$-modules whose top component is a weight $\sl_2$-module that is neither a lowest- nor highest-weight module. In \eqref{eq:Vir-relaxed}, we also introduce the analogous weight Virasoro modules $\mathcal{E}_{\lambda,\chi}^\mathrm{Vir}$ as certain $\N$-gradable $V_\mathrm{Vir}(c,0)$-modules, now with unbounded $L_0$-weights (and infinite-dimensional weight-spaces). While the former admit formal (Jacobi) characters, the latter do not.
\begin{introProp}[\ref{prop:relaxed}]
Let $k\in\C\setminus\{-2\}$ and $c\in\C$. Assume that $(\lambda,\chi)\in\C^2$. Then there is a graded vector-space isomorphism $\varphi^+\colon\mathcal{E}_{\lambda,\chi}^{\sl_2}\overset{\sim}{\longrightarrow}\mathcal{E}_{\lambda,\chi}^\mathrm{Vir}$.
\end{introProp}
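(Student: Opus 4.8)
The plan is to realise both modules, via the Poincaré–Birkhoff–Witt (PBW) theorem, as free modules over an enveloping algebra of negative modes built on one and the same top space $E_{\lambda,\chi}$, to let $\varphi^+$ be the induced bijection of PBW bases, and afterwards to check that it respects the gradings.

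First I would record the two PBW decompositions. On the affine side $\mathcal{E}^{\sl_2}_{\lambda,\chi}=\Ind_{\hat\g_{\geq0}}^{\hat\g}E_{\lambda,\chi}$, and since $\hat\g_{>0}$ is an ideal in $\hat\g_{\geq0}$, PBW gives a bigraded (by $h_0$ and $L_0$) vector-space isomorphism $\mathcal{E}^{\sl_2}_{\lambda,\chi}\cong U(\hat\g_{<0})\otimes E_{\lambda,\chi}$ with $\hat\g_{<0}=\bigoplus_{i\geq1}(\C e_{-i}\oplus\C h_{-i}\oplus\C f_{-i})$. On the Virasoro side I would read off from \eqref{eq:Vir-relaxed} that $\mathcal{E}^\mathrm{Vir}_{\lambda,\chi}$ is naturally induced from the subalgebra $\mathfrak{p}=\bigoplus_{n\geq-1}\C L_n\oplus\C C$, which is a genuine subalgebra of $\mathcal{L}$ on which no central term occurs (as $m^3-m=0$ for $m\in\{-1,0,1\}$). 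Because $\mathcal{L}=\mathcal{L}_{\leq-2}\oplus\mathfrak{p}$ with the complementary subalgebra $\mathcal{L}_{\leq-2}=\bigoplus_{m\geq2}\C L_{-m}$, PBW yields an $L_0$-graded isomorphism $\mathcal{E}^\mathrm{Vir}_{\lambda,\chi}\cong U(\mathcal{L}_{\leq-2})\otimes E_{\lambda,\chi}$.

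Next I would define $\varphi^+$ as a tensor product of two bijections. On the negative modes, the dictionary $e_{-i}\mapsto L_{-3i+1}$, $h_{-i}\mapsto L_{-3i}$, $f_{-i}\mapsto L_{-3i-1}$ for $i\geq1$ (as for the universal algebras, cf.\ \eqref{eq:intro1}) is a bijection between $\{e_{-i},h_{-i},f_{-i}:i\geq1\}$ and $\{L_{-m}:m\geq2\}$, since the residues $2,0,1\bmod3$ sweep out every integer $\geq2$; it therefore induces a linear isomorphism $U(\hat\g_{<0})\overset{\sim}{\to}U(\mathcal{L}_{\leq-2})$ matching ordered PBW monomials. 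On the tops I would identify $E_{\lambda,\chi}$ with itself, sending the $h_0$-weight-$f$ line to the $L_0$-weight-$(-f/2)$ line; conceptually this is the $\sl_2$-isomorphism $\sigma\colon e_0\mapsto-L_1$, $h_0\mapsto-2L_0$, $f_0\mapsto L_{-1}$ (compatible with the map $f_0\mapsto L_{-1}$ of \autoref{prop:vermacorr}), under which $L_0=-\tfrac12 h_0$ on the top. The gradedness check is then pure bookkeeping: $e_{-i}$ shifts the bidegree $(f,n)$ by $(2,i)$ and hence $n'=-f/2+3n$ by $3i-1=\wt(L_{-3i+1})$, and likewise $h_{-i}$ by $3i$ and $f_{-i}$ by $3i+1$; together with $n'=-f/2$ on the top ($n=0$) this shows $\varphi^+$ carries the $(f,n)$-component into the $L_0$-weight space of weight $-f/2+3n$, as required.

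The step I expect to be the main obstacle is the correct set-up on the Virasoro side, i.e.\ making precise that $\mathcal{E}^\mathrm{Vir}_{\lambda,\chi}$ is induced from a compatible $\mathfrak{p}$-module on $E_{\lambda,\chi}$. Unlike the affine case, where the finite-dimensional $\sl_2=\langle e_0,h_0,f_0\rangle$ is the zero-mode subalgebra and $\hat\g_{>0}$ annihilates the top, the Virasoro $\sl_2=\langle L_{-1},L_0,L_1\rangle$ is not a zero-mode subalgebra, and the positive modes $L_{\geq2}$ cannot annihilate the top (e.g.\ $[L_{-1},L_2]=-3L_1$ would force $L_1=0$, destroying the relaxed structure). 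One must instead equip $E_{\lambda,\chi}$ with a $\mathfrak{p}$-module structure in which the $L_{\geq2}$ act nontrivially (of intermediate-series/tensor-density type), restricting on $\langle L_{-1},L_0,L_1\rangle$ to the $\sl_2$-module matched to the affine top via $\sigma$ for the same $(\lambda,\chi)$. Once this is in place the two PBW bases above are manifest and $\varphi^+$ is the desired graded vector-space isomorphism. I would stress that $\varphi^+$ is only a vector-space isomorphism: the affine module carries the extra $h_0$-grading, refining the $L_0$-grading into finite-dimensional pieces and hence admitting a formal Jacobi character, whereas on the Virasoro side the $h_0$-direction is absorbed into the single $L_0$-grading, producing infinite-dimensional weight spaces and no Jacobi character. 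The lowest-weight variant $\varphi^-$ is obtained from the mirror dictionary $e_{-i}\mapsto L_{-3i-1}$, $h_{-i}\mapsto L_{-3i}$, $f_{-i}\mapsto L_{-3i+1}$.
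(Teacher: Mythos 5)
Your proof is, in its core mechanics, the same as the paper's: the paper likewise exhibits the PBW bases $\prod_{i\geq1}(e_{-i})^{k_i}(h_{-i})^{l_i}(f_{-i})^{m_i}v_{\lambda+2n}$ of $\mathcal{E}^{\sl_2}_{\lambda,\chi}$ and $\prod_{i\geq2}(L_{-i})^{m_i}v_{\lambda+2n}$ of $\mathcal{E}^{\mathrm{Vir}}_{\lambda,\chi}$, defines $\varphi^+$ by exactly your dictionary $e_{-i}\mapsto L_{-3i+1}$, $h_{-i}\mapsto L_{-3i}$, $f_{-i}\mapsto L_{-3i-1}$, $v_{\lambda+2n}\mapsto v_{\lambda+2n}$, and does the same grading bookkeeping. (The paper records the grading offset as $-3\chi$ in its normalisation, where the affine top space has $L_0$-eigenvalue $\chi/(2(k+2))$ and the Virasoro top vector $v_{\lambda+2n}$ has $L_0$-weight $-\lambda/2-n$; you normalise the top to degree $0$ and get $n'=-f/2+3n$. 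This discrepancy is purely a choice of offset and immaterial to the isomorphism.)

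Where you go beyond the paper is in the set-up on the Virasoro side, and the obstacle you flag is in fact a genuine objection to the paper's own definition, not merely a hypothetical difficulty: in \eqref{eq:Vir-relaxed} the paper simply declares that $\bigoplus_{s\geq2}\C L_s$ acts trivially on $E_{\lambda,\chi}$ while $\g_1=\langle L_{-1},L_0,L_1\rangle$ acts through the relaxed $\sl_2$-structure. As you observe, $\bigoplus_{s\geq2}\C L_s$ is not an ideal of $P_\mathcal{L}$, and the relation $[L_2,L_{-1}]=3L_1$ applied to any $v\in E_{\lambda,\chi}$ forces $L_1v=0$, contradicting the bijective action of $L_1=-e$; so, taken literally, the prescribed action is not a $P_\mathcal{L}$-module structure and the induction is ill-posed. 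Your repair — a nontrivial action of $L_{\geq2}$ on the top of intermediate-series/tensor-density type — is the right idea, and note that \emph{any} genuine $P_\mathcal{L}$-module structure on $E_{\lambda,\chi}$ compatible with the stated $L_0$-grading leaves the PBW basis, the map $\varphi^+$ and its grading property untouched, so the proposition itself survives the repair unchanged. The one gap in your own write-up is that the existence of such an extension for \emph{every} $(\lambda,\chi)\in\C^2$ is asserted rather than proved: for the density modules $\mathcal{F}_{\alpha,\beta}$ restricted to $\bigoplus_{s\geq-1}\C L_s$, the $\sl_2$-Casimir depends quadratically on $\beta$ and takes every complex value, while $\alpha$ adjusts the weight coset, so the generic case is immediate; but at degenerate parameters (where $E_{\lambda,\chi}$ is reducible) the structure constants of $\mathcal{F}_{\alpha,\beta}$ need not reproduce the bijective $e$-action of $E_{\lambda,\chi}$, and matching the $\sl_2$-structure exactly there requires a separate argument (a deformed module over $\bigoplus_{s\geq-1}\C L_s$, or induction from a smaller subalgebra followed by a quotient). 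Supplying that check would make your version strictly more complete than the paper's.
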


We also find a vector-space isomorphism between certain Whittaker modules $\operatorname{Wh}^k_{\hat{\sl}_2}(\lambda,\mu)$ for $V^k(\sl_2)$ and $\operatorname{Wh}^c_\mathrm{Vir}(a,b)$ for $V_\mathrm{Vir}(c,0)$. These are weak vertex algebra modules and admit neither $L_0$- nor $h_0$-gradings nor characters in the usual way. Note that both modules are irreducible if all parameters $\lambda,\mu,a,b\in\C$ are nonzero \cite{ALZ16,OW09}.
\begin{introProp}[\ref{prop:isomorphism-whittaker}]
Let $k\in\C$ and $c\in\C$. Let $\lambda,\mu\in\C\setminus\{0\}$ and $a,b\in\C\setminus\{0\}$. Then the map $\varphi^+\colon\operatorname{Wh}^k_{\hat{\sl}_2}(\lambda,\mu)\overset{\sim}{\longrightarrow}\operatorname{Wh}^c_\mathrm{Vir}(a,b)$ is a vector-space isomorphism.
\end{introProp}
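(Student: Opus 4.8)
The plan is to reduce the statement to a combinatorial bijection of Poincaré--Birkhoff--Witt (PBW) bases, since---unlike the Verma and relaxed cases---these modules carry no $L_0$- or $h_0$-grading, so no weight-space bookkeeping is available. First I would recall the two constructions. The positive nilpotent subalgebra $\hat{\mathfrak{n}}_+$ of $\hat{\sl}_2=A_1^{(1)}$ is generated by the two simple root vectors $e_0$ and $f_1$, so a Whittaker function $\psi\colon\hat{\mathfrak{n}}_+\to\C$ is determined by the two scalars $\lambda=\psi(e_0)$ and $\mu=\psi(f_1)$; the module $\operatorname{Wh}^k_{\hat{\sl}_2}(\lambda,\mu)$ is induced from $\psi$ (with $K$ acting by $k$). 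Likewise $\mathfrak{n}_+^{\mathrm{Vir}}=\bigoplus_{n\geq1}\C L_n$ is generated by $L_1,L_2$, and $\operatorname{Wh}^c_\mathrm{Vir}(a,b)$ is induced from the Whittaker function with values $a=\psi(L_1)$ and $b=\psi(L_2)$ (with $C$ acting by $c$). By PBW, each Whittaker module is free of rank one, generated by its Whittaker vector $w$, over the universal enveloping algebra of the subalgebra complementary to the inducing one in the chosen triangular decomposition.

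Next I would extend the assignment defining $\varphi^+$ to all modes by the same formulas, setting $e_i\mapsto L_{3i+1}$, $h_i\mapsto L_{3i}$ and $f_i\mapsto L_{3i-1}$ for every $i\in\Z$; at $i=0$ this recovers $f_0\mapsto L_{-1}$ and $h_0\mapsto L_0$, and on negative modes it reproduces the mode correspondence used for the Verma and relaxed modules. The key observation is that, since the residues $3i+1$, $3i$, $3i-1$ cover all residue classes modulo $3$, this assignment is a bijection from the mode basis $\{e_i,h_i,f_i:i\in\Z\}$ of $\hat{\sl}_2$ onto the mode basis $\{L_n:n\in\Z\}$ of the Virasoro algebra, and it respects the triangular decomposition: it carries $\hat{\mathfrak{n}}_\pm$ onto $\mathfrak{n}_\pm^\mathrm{Vir}$ and the Cartan $\C h_0\oplus\C K$ onto $\C L_0\oplus\C C$, matching in particular $e_0\leftrightarrow L_1$ and $f_1\leftrightarrow L_2$. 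Consequently it restricts to a bijection between the spanning sets of the two complementary (lowering) subalgebras that index the PBW bases above.

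With this in hand, I would define $\varphi^+$ by sending the Whittaker vector of $\operatorname{Wh}^k_{\hat{\sl}_2}(\lambda,\mu)$ to that of $\operatorname{Wh}^c_\mathrm{Vir}(a,b)$ and any ordered PBW monomial in the modes complementary to $\hat{\mathfrak{n}}_+$ to the corresponding ordered monomial in the complementary Virasoro modes, using compatible total orders. Since this sends one PBW basis bijectively to the other, $\varphi^+$ is a vector-space isomorphism; the map is purely linear, need not be an algebra or module map, and its well-definedness is immediate from PBW. The main obstacle is precisely the bookkeeping just described: with no grading to lean on, one must verify directly that the mode bijection respects the triangular decomposition, so that the inducing (Whittaker) parts correspond and the complementary lowering parts---on which the PBW bases are built---match up exactly, each $L_{-n}$ being hit once. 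The pairing of the positive parts simultaneously identifies the Whittaker data, $\lambda\leftrightarrow a$ and $\mu\leftrightarrow b$; under this identification the nonvanishing hypotheses on $\lambda,\mu,a,b$ are exactly the irreducibility criteria of \cite{ALZ16,OW09}, which ensure the Whittaker vectors are unique up to scalar and hence that $\varphi^+$ is canonical.
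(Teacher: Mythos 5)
Your proposal is correct and takes essentially the same route as the paper: the paper defines $\varphi^+$ precisely by sending the PBW basis $\prod_{i=0}^\infty(e_{-i-1})^{k_{i+1}}(h_{-i})^{l_i}(f_{-i})^{m_i}w^{\sl_2}_{\lambda,\mu}$ to the corresponding monomials $\prod(L_{-3i+1})^{\bullet}(L_{-3i})^{\bullet}(L_{-3i-1})^{\bullet}w^\mathrm{Vir}_{a,b}$, i.e., via exactly your mode bijection of $\{e_{-i}\}_{i\geq1}\cup\{h_{-i}\}_{i\geq0}\cup\{f_{-i}\}_{i\geq0}$ onto $\{L_{-n}\}_{n\geq0}$, and the isomorphism follows at once from the bijection of bases. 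Only your closing remark is superfluous (and slightly off): irreducibility plays no role in the linear isomorphism, and on the affine side it would additionally require $k\neq-2$, whereas the proposition allows all $k\in\C$.
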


\medskip

\noindent\emph{Integral Levels.} Returning to highest- and lowest weight modules, we mention a few interesting special cases. The nicest admissible levels for $V^{-2+q/p}(\sl_2)$ are the integral levels $k=-2+q$ with $q\geq2$, i.e.\ the case $p=1$. Here, both \voa{}s $L_{-2+q}(\sl_2)$ and $L_\mathrm{Vir}(c_{q,3},0)$ are \strat{} and the correspondence in \autoref{prop:vermasingadm} and \autoref{cor:charid} (or \autoref{prop:lowestvermasingadm} and \autoref{cor:lowestcharid}) defines a bijection between the irreducible modules that respects the fusion rules.
\begin{introProp}[\ref{prop:fusionrings}]
Let $q\in\Z$ with $q\geq 2$ and $3\nmid q$. Then the correspondence $L_{-2+q}(\mu_{r,0})\mapsto L_\mathrm{Vir}(c_{q,3},h_{r+1,1}^{q,3})$ for $r\in\Z$ with $0\leq r\leq q-2$ defines an isomorphism of fusion rings of the modular tensor categories $\Rep(L_{-2+q}(\sl_2))$ and $\Rep(L_\mathrm{Vir}(c_{q,3},0))$.
\end{introProp}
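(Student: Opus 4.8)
The plan is to prove this by directly comparing the two fusion rings, both of which are completely explicit in this integral case. First I would set the stage: for $p=1$ the level is $k=q-2\in\N$, so $L_{-2+q}(\sl_2)$ is the affine \voa{} at nonnegative integral level, whose simple modules are exactly $L_{-2+q}(\mu_{r,0})$ for $0\le r\le q-2$; since it is \strat{} (as asserted in the integral-level paragraph), its fusion ring is the Verlinde ring of $\widehat{\sl_2}$ at level $k=q-2$. On the Virasoro side, $L_\mathrm{Vir}(c_{q,3},0)$ is the $(q,3)$-minimal model (here $3\nmid q$ guarantees $\gcd(q,3)=1$, so that $c_{q,3}$ is a genuine minimal-model central charge), with simple modules $L_\mathrm{Vir}(c_{q,3},h^{q,3}_{r,s})$ indexed by $1\le r\le q-1$ and $1\le s\le2$ modulo the Kac-table identification $(r,s)\sim(q-r,3-s)$. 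The first thing I would record is that this identification lets every simple be represented uniquely with $s=1$, giving the $q-1$ representatives $(r,1)$, $1\le r\le q-1$; this matches the count $q-1$ of affine simples and shows that $r\mapsto(r+1,1)$ is a bijection of the two sets of isomorphism classes. I would also check at the outset that the unit is preserved: the affine vacuum $r=0$ corresponds to $(1,1)$, i.e.\ the Virasoro vacuum $L_\mathrm{Vir}(c_{q,3},0)$.

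The core of the argument is a side-by-side comparison of structure constants. On the affine side the level-$(q-2)$ fusion rule is the truncated $\sl_2$ tensor product: $L_{-2+q}(\mu_{a,0})\boxtimes L_{-2+q}(\mu_{b,0})=\sum_c L_{-2+q}(\mu_{c,0})$, with $c$ running over $|a-b|,|a-b|+2,\dots,\min(a+b,2(q-2)-a-b)$. On the Virasoro side I would invoke the Belavin--Polyakov--Zamolodchikov fusion rules for the minimal model, in which the two Kac indices fuse independently. The key degeneration is in the $s$-direction: with $p=3$ and $s_1=s_2=1$, the range $s_3\in\{1,\dots,\min(s_1+s_2-1,\,2p-1-s_1-s_2)\}=\{1,\dots,\min(1,3)\}$ collapses to $s_3=1$, so the row $s=1$ is closed under fusion, while the $r$-direction runs over $|r_1-r_2|+1,|r_1-r_2|+3,\dots,\min(r_1+r_2-1,\,2q-1-r_1-r_2)$. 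Substituting $r_1=a+1$, $r_2=b+1$ and writing the affine output label as $r_3=c+1$, both the lower bound $|a-b|+1$ and the truncated upper bound $\min(a+b+1,\,2q-3-a-b)$ coincide with the affine bounds. Hence the fusion coefficients agree termwise under $r\mapsto(r+1,1)$, and since both rings are commutative, associative and unital with this common table of nonnegative-integer structure constants, the bijection extends $\Z$-linearly to an isomorphism of fusion rings.

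The one point requiring genuine care --- and the main potential obstacle --- is the bookkeeping around the Kac-table identification: one must confirm that every fusion product of two $s=1$ modules already lands in the fundamental domain $\{(r_3,1):1\le r_3\le q-1\}$, with no wraparound through $(r,s)\sim(q-r,3-s)$. I would verify this by checking that the truncated upper bound never exceeds $q-1$: when $a+b\le q-2$ it equals $a+b+1\le q-1$, and when $a+b>q-2$ the truncation gives $2q-3-a-b<q-1$; in all cases $1\le r_3\le q-1$, so no identification is needed. This closes the direct argument. As an alternative, more conceptual route, one could instead derive the isomorphism from the modular data: the character identity of \autoref{cor:charid} relates the modular $S$-matrices of the two categories by a Galois conjugation, and since the Verlinde formula expresses the fusion coefficients as Galois-invariant nonnegative integers, the Galois-conjugate modular data yields the same fusion coefficients up to the induced relabeling, which one identifies with $r\mapsto(r+1,1)$. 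I would nonetheless present the direct comparison as the primary proof, since it is self-contained and does not rely on the Galois symmetry of modular tensor categories.
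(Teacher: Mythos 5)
Your proposal is correct and takes essentially the same route as the paper: the paper proves Proposition~\ref{prop:fusionrings} precisely by writing out the explicit affine level-$(q-2)$ fusion rules and the $(q,3)$-minimal-model fusion rules (already in the matching indices $h^{q,3}_{r+1,1}$) and observing that the structure constants coincide termwise under $r\mapsto(r+1,1)$. Your extra bookkeeping --- the collapse of the $s$-direction in the BPZ rules for $p=3$ and the check that no Kac-table wraparound $(r,s)\sim(q-r,3-s)$ occurs --- fills in details the paper leaves implicit, and you are right to demote the Galois argument to a secondary remark, since in the paper the Galois relationship is only stated as Conjecture~\ref{conj:galois} rather than used in the proof.
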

The modular tensor categories $\Rep(L_{-2+q}(\sl_2))$ and $\Rep(L_\mathrm{Vir}(c_{q,3},0))$ are in general not equivalent, not even as fusion categories. However, we suspect that they are related by a certain Galois symmetry \cite{BG91,CG94}.
\begin{introConj}[\ref{conj:galois}]
For $q\in\Z$ with $q\geq 2$ and $3\nmid q$, the modular tensor categories $\Rep(L_{-2+q}(\sl_2))$ and $\Rep(L_\mathrm{Vir}(c_{q,3},0))$ are Galois conjugates.
\end{introConj}
The assertion is known to be true when $q$ is odd \cite{Gan03}. Moreover, for small values of $q$, the vector-valued characters of $L_{-2+q}(\sl_2)$ and $L_\mathrm{Vir}(c_{q,3},0)$ are related by certain Hecke operators \cite{HHW20,Wu20,DLS22}.

\medskip

\noindent\emph{Free Fermions.} We further specialise to the integral level $k=2$, i.e.\ the case $(q,p)=(4,1)$. Then both the simple affine \voa{} $L_2(\sl_2)$ and the simple Virasoro \voa{} $L_\mathrm{Vir}(c_{4,3},0)$, as well as their irreducible modules with lowest $L_0$-weight $1/2$, can be embedded into the \svoa{} of three and one free fermion, respectively:
\begin{equation*}
F^{3/2}\cong L_2(\mu_{0,0})\oplus L_2(\mu_{2,0}),\quad F^{1/2}\cong L_\mathrm{Vir}(c_{4,3},h_{1,1}^{4,3})\oplus L_\mathrm{Vir}(c_{4,3},h_{3,1}^{4,3}).
\end{equation*}
The character identities in \autoref{cor:charid} and \autoref{cor:lowestcharid} then naturally extend to the character identities (see \autoref{prop:charidFF})
\begin{equation*}
\ch^*_{F^{3/2}}(q^{\mp1/2},q^3)=\ch^*_{F^{1/2}}(q),
\end{equation*}
which imply the existence (nonuniquely) of graded vector-space isomorphisms
\begin{equation*}
\chi^\pm\colon F^{3/2}\overset{\sim}{\longrightarrow}F^{1/2}
\end{equation*}
that map a homogeneous vector of $h_0$-weight $f$ and $L_0$-weight $n$ to a vector of $L_0$-weight $n'=\mp f/2+3n$.

Because $\smash{F^{3/2}=\langle\Psi^{(0)},\Psi^+,\Psi^-\rangle}$ and $\smash{F^{1/2}=\langle\Phi\rangle}$ are free-field \svoa{}s with three and one strong generator, respectively, we are able to explicitly construct the graded vector-space isomorphisms $\chi^\pm$, in contrast to our general remark above (see \autoref{rem:inducedGVSI}). They are defined by mapping
\begin{equation}\label{eq:intro2}
\Psi^+_{-i}\mapsto\Phi_{-3i+1\pm1},\quad\Psi^-_{-i}\mapsto\Phi_{-3i+1\mp1},\quad\Phi^{(0)}_{-i}\mapsto\Phi_{-3i+1}
\end{equation}
for $i\in\Ns$ and $\vac\mapsto\vac$, similarly to \eqref{eq:intro1}. See also \autoref{rem:twisted} for an interpretation in terms of twisted modules for permutation automorphisms. The map \eqref{eq:intro2} also endows the \svoa{}s $F^{3/2}$ and $F^{1/2}$ with a new type of duality that was first observed in \cite{AW23} in a different context (see \autoref{rem:AWduality}):
\begin{introProp}[\ref{prop:FFcorrespondence}]
The \svoa{} $F^{1/2}$ is an irreducible $F^{3/2}$-module and \eqref{eq:intro2} is an isomorphism of $F^{3/2}$-modules $\smash{\chi^\pm\colon F^{3/2}\overset{\sim}{\longrightarrow}F^{1/2}}$.

Conversely, the \svoa{} $F^{3/2}$ is an irreducible $F^{1/2}$-module and \eqref{eq:intro2} is an isomorphism of $F^{1/2}$-modules $\smash{(\chi^\pm)^{-1}\colon F^{1/2}\overset{\sim}{\longrightarrow}F^{3/2}}$.
\end{introProp}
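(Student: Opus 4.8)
The plan is to realise both $F^{3/2}$ and $F^{1/2}$ as Fock spaces for the Clifford algebras generated by their fermionic modes, and to show that \eqref{eq:intro2} is, on the level of modes, an isomorphism of these Clifford algebras respecting the polarisation into creation and annihilation operators. Writing the generating fields with integer-indexed modes (so that a weight-$1/2$ fermion $X$ satisfies $\{X_m,X_n\}=\delta_{m+n+1,0}$ and $X_m\vac=0$ for $m\geq0$), the three free fermions obey $\{\Psi^{(0)}_m,\Psi^{(0)}_n\}=\{\Psi^+_m,\Psi^-_n\}=\delta_{m+n+1,0}$ with all remaining anticommutators zero. First I would extend \eqref{eq:intro2} to all integer modes by $\Psi^{(0)}_m\mapsto\Phi_{3m+1}$, $\Psi^+_m\mapsto\Phi_{3m+1\pm1}$, $\Psi^-_m\mapsto\Phi_{3m+1\mp1}$, which recovers \eqref{eq:intro2} at $m=-i$. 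Since every integer is uniquely of the form $3m$, $3m+1$ or $3m+2$, this is a bijection on the mode index sets sending creation modes ($m<0$) to creation modes and annihilation modes ($m\geq0$) to annihilation modes.

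The central computation is then to check that this relabeling preserves all anticommutators, which is immediate from the residue arithmetic modulo $3$. For instance, $\{\Phi_{3m+1},\Phi_{3n+1}\}=\delta_{3(m+n)+3,0}=\delta_{m+n+1,0}$ matches $\{\Psi^{(0)}_m,\Psi^{(0)}_n\}$, and $\{\Phi_{3m+1\pm1},\Phi_{3n+1\mp1}\}=\delta_{3(m+n)+3,0}=\delta_{m+n+1,0}$ matches $\{\Psi^+_m,\Psi^-_n\}$, while every bracket not of this shape has index sum $+1$ with nonzero residue modulo $3$ and hence vanishes, exactly as on the $F^{3/2}$ side. Thus \eqref{eq:intro2} induces an isomorphism of the mode Clifford algebras carrying the vacuum-annihilating subalgebra of $F^{3/2}$ onto that of $F^{1/2}$ and fixing $\vac$.

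Because the free-fermion \svoa{}s $F^{3/2}$ and $F^{1/2}$ are strongly — in fact freely — generated by their fermions, whose only nontrivial products are the Clifford brackets above, a weak module over either is the same datum as a restricted module over the corresponding mode Clifford algebra, the vacuum module being exactly the irreducible Fock space. Letting each mode of $F^{3/2}$ act on $F^{1/2}$ as the $\Phi$-mode prescribed by \eqref{eq:intro2} therefore turns $F^{1/2}$ into an $F^{3/2}$-module, and since $\chi^\pm$ fixes $\vac$ and intertwines all generating modes, it intertwines every vertex operator $Y(v,z)$ and is an isomorphism of $F^{3/2}$-modules. Irreducibility of $F^{1/2}$ as an $F^{3/2}$-module follows because $F^{3/2}$ is a simple \svoa{}, hence irreducible over itself, and $\chi^\pm$ is an isomorphism onto it. I would spell out only $\chi^+$, the case $\chi^-$ following by precomposition with the order-two automorphism of $F^{3/2}$ exchanging $\Psi^+$ and $\Psi^-$ (the lift of $h\mapsto-h$). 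The converse is entirely symmetric: the inverse relabeling $\Phi_{3m}\mapsto\Psi^-_m$, $\Phi_{3m+1}\mapsto\Psi^{(0)}_m$, $\Phi_{3m+2}\mapsto\Psi^+_m$ is again a polarisation- and vacuum-preserving Clifford isomorphism, so $(\chi^\pm)^{-1}$ exhibits $F^{3/2}$ as an irreducible $F^{1/2}$-module.

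I expect the main subtlety to be conceptual rather than computational: one must keep in mind that $\chi^\pm$ is an isomorphism of \emph{modules}, not of \svoa{}s, since it does not intertwine the two conformal vectors. Indeed, as recorded in \autoref{prop:charidFF}, $\chi^\pm$ carries a vector of $(h_0,L_0)$-weight $(f,n)$ to one of native $L_0$-weight $\mp f/2+3n$, so the grading on $F^{1/2}$ as an $F^{3/2}$-module — given by the zero mode of the image of the $F^{3/2}$ conformal vector — differs from its intrinsic Virasoro grading; verifying that this induced grading is still bounded below with finite-dimensional homogeneous pieces (which it is, the two gradings being related through the $h_0$-action) is the only point where care is needed. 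The remaining step — that preserving the Clifford brackets suffices to produce a genuine \svoa{}-module intertwiner rather than merely a Clifford-module map — is standard for freely generated free-fermion algebras but should be stated explicitly.
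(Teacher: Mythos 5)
Your proposal is correct and is essentially the paper's own approach: the proposition is stated in the paper without a separate proof, with the module structures given exactly by your extended mode relabeling, namely $Y_{F^{1/2}}(\Psi^+,z)=\sum_{i\in\Z}\Phi_{3i+1\pm1}z^{-i-1}$ and its analogues in one direction and the inverse relabeling defining $Y_{F^{3/2}}(\Phi,z)$ in the other. Your mod-$3$ check of the Clifford anticommutators, the bookkeeping that the bijection of mode indices preserves the creation/annihilation polarisation and the vacuum, and the appeal to the standard equivalence between weak modules over a free-fermion \svoa{} and restricted modules over its mode Clifford superalgebra supply precisely the routine verification that the paper leaves implicit.
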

By restriction to the even parts of these \svoa{}s, we then obtain analogous isomorphisms of $L_2(\sl_2)$-modules $\smash{\chi^\pm\colon L_2(\sl_2)\overset{\sim}{\longrightarrow}L_\mathrm{Vir}(c_{4,3},0)}$ and of $L_\mathrm{Vir}(c_{4,3},0)$-modules $\smash{(\chi^\pm)^{-1}\colon L_\mathrm{Vir}(c_{4,3},0)\overset{\sim}{\longrightarrow}L_2(\sl_2)}$, constituting again a duality in the sense of \cite{AW23}; see \autoref{cor:dual}.

\medskip

\noindent\emph{Schur Indices.} In the following, we discuss two applications of the graded vector-space isomorphisms in this text to vertex operator (super)algebras appearing in the context of the 4d/2d-correspondence \cite{BLLPRR15} in physics. The idea is that the character identities can be interpreted as identities of Schur indices of 4d $\mathcal{N}=2$ superconformal field theories. First, we consider the (nonadmissible) near-admissible case $q=1$ and then the (boundary) admissible case $q=2$.

Let $p\geq1$. The vector-space isomorphisms $\psi^\pm\colon V^{-2+1/p}(\mu_{r,0})\overset{\sim}{\longrightarrow}M_{r+1,1;3p}$ for $r\in\N$ from \autoref{prop:gvsi} extend to the infinite direct sums
\begin{equation*}
\psi^\pm\colon\mathcal{V}^{(p)}=\bigoplus_{r=0}^\infty(r+1)V^{-2+1/p}(\mu_{r,0})\overset{\sim}{\longrightarrow}\mathcal{A}^{(3p)}=\bigoplus_{r=0}^\infty(r+1)M_{r+1,1;3p}
\end{equation*}
of these irreducible modules (see \autoref{prop:gvsispecial}), which admit well-known structures of generalised \voa{}s; see \cite{Ada16,ACGY21} for $\mathcal{V}^{(p)}$ and \cite{FFT11,FT11,AM13} for the doublet $\mathcal{A}^{(p)}$. On the level of the characters:
\begin{introProp}[\ref{prop:charid2special}]
Let $p\in\Ns$. Then the following character identities hold:
\begin{equation*}
\ch^*_{\mathcal{V}^{(p)}}(q^{\mp1/2},q^3)=\ch^*_{\mathcal{A}^{(3p)}}(q).
\end{equation*}
Moreover, if $p\equiv2\pmod4$, then both sides are \svoa{}s and also the supercharacters satisfy
\begin{equation*}
\sch^*_{\mathcal{V}^{(p)}}(q^{\mp1/2},q^3)=\sch^*_{\mathcal{A}^{(3p)}}(q).
\end{equation*}
\end{introProp}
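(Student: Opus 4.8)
The plan is to deduce both identities from the term-wise results of \autoref{cor:charid2} and \autoref{prop:gvsi} by additivity of (super)characters over the defining direct-sum decompositions of $\mathcal{V}^{(p)}$ and $\mathcal{A}^{(3p)}$. For the character identity, additivity over direct sums gives
\begin{equation*}
\ch^*_{\mathcal{V}^{(p)}}(q^{\mp1/2},q^3)=\sum_{r=0}^\infty(r+1)\,\ch^*_{V^{-2+1/p}(\mu_{r,0})}(q^{\mp1/2},q^3),
\end{equation*}
and likewise $\ch^*_{\mathcal{A}^{(3p)}}(q)=\sum_{r=0}^\infty(r+1)\,\ch^*_{M_{r+1,1;3p}}(q)$; both formal sums are well-defined because the lowest conformal weights $h_r=pr(r+2)/4$ of the summands tend to infinity, so each power of $q$ receives only finitely many contributions. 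Applying the term-wise identity of \autoref{cor:charid2} (for $\psi^+$) and its lowest-weight counterpart \autoref{cor:lowestcharid2} (for $\psi^-$) to each summand, and using that the isomorphism $\psi^\pm$ of \autoref{prop:gvsispecial} matches the two sides summand by summand with equal multiplicities $(r+1)$, the two sums agree term by term, which proves the character identity.

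For the supercharacter identity, I would first invoke the \svoa{} structures on $\mathcal{V}^{(p)}$ (from \cite{Ada16,ACGY21}) and on the doublet $\mathcal{A}^{(3p)}$ (from \cite{FFT11,FT11,AM13}), under which the parity of a homogeneous vector is the parity of the index $r$ of the summand containing it. The hypothesis $p\equiv2\pmod4$ is exactly what turns this $\Z/2\Z$-grading into that of a genuine \svoa{}: writing $p=4m+2$, the lowest conformal weight $h_r=pr(r+2)/4$ is an integer when $r$ is even and a strict half-integer when $r$ is odd, so the even and odd summands sit at integer and half-integer conformal weights, respectively. The analogous integer/half-integer split holds on the doublet side after the weight transformation $n'=\mp f/2+3n$ of \autoref{prop:gvsispecial}, so that the assignment parity $=r\bmod2$ is consistent on both sides and is preserved by $\psi^\pm$, which fixes the summand index $r$. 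Since the supercharacter is obtained from the character by inserting the sign $(-1)^r$ into the $r$-th summand, repeating the additivity argument verbatim with this sign yields
\begin{equation*}
\sch^*_{\mathcal{V}^{(p)}}(q^{\mp1/2},q^3)=\sum_{r=0}^\infty(-1)^r(r+1)\,\ch^*_{M_{r+1,1;3p}}(q)=\sch^*_{\mathcal{A}^{(3p)}}(q).
\end{equation*}

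I expect the main obstacle to lie in the supercharacter case, specifically in verifying that the two \svoa{} parities genuinely correspond under $\psi^\pm$ — that is, that the assignment parity $=r\bmod2$ is the correct one for both the affine and the doublet side, and that $p\equiv2\pmod4$ is precisely the congruence under which both sides are honest \svoa{}s rather than merely $\N$-graded vertex algebras. The character identity itself requires no new input beyond term-wise additivity and the routine convergence of the graded dimensions as formal power series.
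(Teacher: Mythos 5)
Your proposal is correct and follows essentially the same route as the paper: the paper deduces Proposition~\ref{prop:charid2special} by summing the termwise identities of \autoref{cor:charid2} and \autoref{cor:lowestcharid2} over the decompositions $\mathcal{V}^{(p)}=\bigoplus_{r=0}^\infty(r+1)V^{-2+1/p}(\mu_{r,0})$ and $\mathcal{A}^{(3p)}=\bigoplus_{r=0}^\infty(r+1)M_{r+1,1;3p}$, and handles the supercharacter case via the observation (made in the proof of \autoref{prop:gvsispecial}) that for $p\equiv2\pmod4$ the parity on both sides is exactly $r\bmod2$, so that $\psi^\pm$ is parity-preserving. Your added checks — convergence of the formal sums from $\ell_{r,0}^{1,p}=pr(r+2)/4\to\infty$ and the integer/half-integer split of conformal weights confirming the parity assignment — are sound and merely make explicit what the paper leaves implicit.
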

We remark that the \aia{}s $\mathcal{V}^{(p)}$ and $\mathcal{A}^{(p)}$ for the same $p$ are related by quantum Drinfeld-Sokolov reduction \cite{ACGY21}, but here we relate them for $p$ and $3p$.

For $p=2$ \cite{BN22}, the \svoa{} $\mathcal{V}^{(2)}$ is the simple quotient of the small $\mathcal{N}=4$ superconformal algebra of central charge $c=-9$ \cite{Kac98,Ada16}. In the context of the 4d/2d-correspondence \cite{BMR19}, it is the \svoa{} $\smash{\mathcal{V}^{(2)}\cong\mathbb{V}(\T_{\SU(2)})}$ for the 4d $\mathcal{N}=4$ supersymmetric Yang-Mills theory $\T_{\SU(2)}$ with gauge group $\SU(2)$ \cite{AKM23}. On the other hand, the doublet \svoa{} $\smash{\mathcal{A}^{(6)}\cong\mathbb{V}(\T_{(3,2)})}$ is the \svoa{} of a certain 4d $\mathcal{N}=2$ theory $\T_{(3,2)}$ obtained as marginal diagonal $\SU(2)$ gauging of three $D_3(\SU(2))\cong(A_1,A_3)$ Argyres-Douglas theories \cite{BN16,DVX15}.

The character identity in \autoref{prop:charid2special} then implies the identity \cite{BN22}
\begin{equation*}
\I_{\SU(2)}(q^{\mp1/2},q^3)=\I_{(3,2)}(q)
\end{equation*}
of the Schur indices of these 4d $\mathcal{N}=2$ superconformal field theories. This Schur index identity can be generalised considerably to further 4d $\mathcal{N}=2$ superconformal field theories, as shown in \cite{KLS21}. It would be interesting to also find an underlying vertex algebraic mechanism explaining this (see \autoref{prob:generalschur}). Moreover, one can ask if the correspondence between $\mathcal{V}^{(p)}$ and $\mathcal{A}^{(3p)}$ for $p>2$ has any physical interpretation (see \autoref{prob:VpA3p}).

\medskip

\noindent\emph{Boundary Admissible Levels.} Finally, we consider the case $q=2$. Let $p\in\Ns$ be odd. In \autoref{cor:charid} and \autoref{cor:lowestcharid} we established the character identities $\smash{\ch^*_{L_{-2+2/p}(\sl_2)}(q^{\mp1/2},q^3)=\ch^*_{L_\mathrm{Vir}(c_{2,3p},0)}(q)}$ between the simple \voa{}s. It is known (see, e.g., \cite{BR18}) that these \voa{}s correspond to 4d $\mathcal{N}=2$ Argyres-Douglas theories of type $(A_1,D_{2n+1})$ and $(A_1,A_{2n})$, respectively, i.e.\ $\smash{L_{-2+2/(2n+1)}(\sl_2)\cong\mathbb{V}(\T_{(A_1,D_{2n+1})})}$ and $\smash{L_\mathrm{Vir}(c_{2,2n+3},0)\cong\mathbb{V}(\T_{(A_1,A_{2n})})}$ for $n\in\Ns$. On the level of Schur indices, it then follows:
\begin{introCor}[\ref{cor:schur}]
For $n\in\Ns$, the following identity of Schur indices holds:
\begin{equation*}
\I_{(A_1,D_{2n+1})}(q^{\mp1/2},q^3)=\I_{(A_1,A_{6n})}(q).
\end{equation*}
\end{introCor}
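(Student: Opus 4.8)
The plan is to deduce this Schur-index identity directly from the vertex-algebraic character identity for the simple \voa{}s, reinterpreting both sides through the 4d/2d-correspondence \cite{BLLPRR15}, exactly in parallel to the near-admissible ($q=1$) identity $\I_{\SU(2)}=\I_{(3,2)}$ discussed above. Since the analytic content has already been established in \autoref{cor:charid} and \autoref{cor:lowestcharid}, the proof amounts to combining that identity with the known \voa{} identifications and matching the fugacity and nome variables correctly.

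First, I would specialise the character identity for the simple \voa{}s to the boundary admissible level $q=2$ with $p=2n+1$ odd, so that $3p=6n+3=2(3n)+3$, giving
\begin{equation*}
\ch^*_{L_{-2+2/(2n+1)}(\sl_2)}(q^{\mp1/2},q^3)=\ch^*_{L_\mathrm{Vir}(c_{2,6n+3},0)}(q).
\end{equation*}
Next, I would invoke the 4d/2d-correspondence, which identifies the Schur index of a 4d $\mathcal{N}=2$ superconformal field theory $\T$ with the (flavoured) vacuum character of its associated \voa{} $\mathbb{V}(\T)$. Together with the identifications from \cite{BR18}, namely $L_{-2+2/(2n+1)}(\sl_2)\cong\mathbb{V}(\T_{(A_1,D_{2n+1})})$ and, via $(A_1,A_{2m})$ with $m=3n$, $L_\mathrm{Vir}(c_{2,6n+3},0)\cong\mathbb{V}(\T_{(A_1,A_{6n})})$, one reads off $\I_{(A_1,D_{2n+1})}=\ch^*_{L_{-2+2/(2n+1)}(\sl_2)}$ and $\I_{(A_1,A_{6n})}=\ch^*_{L_\mathrm{Vir}(c_{2,6n+3},0)}$. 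Substituting these into the displayed identity then yields the claim.

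The main point requiring care is the bookkeeping of variables. On the $(A_1,D_{2n+1})$ side, the $\SU(2)$ flavour fugacity of this Argyres-Douglas theory must be identified with the first (Jacobi) argument of the affine character and specialised to $q^{\mp1/2}$, while its superconformal nome is set to $q^3$; on the $(A_1,A_{6n})$ side, the absence of continuous flavour symmetry matches the fact that the minimal-model character $\ch^*_{L_\mathrm{Vir}(c_{2,6n+3},0)}$ is a function of $q$ alone. One also has to confirm that the normalisation conventions (the $q^{-c/24}$-type prefactors) used to define $\ch^*$ are consistent with those used for the Schur index on both sides, so that the equality of characters transfers verbatim to an equality of indices. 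I expect this variable- and normalisation-matching, together with confirming the label arithmetic $(A_1,A_{2m})$ with $m=3n$, rather than any further computation, to be the only genuine obstacle, since all the analytic work is already contained in \autoref{cor:charid} and \autoref{cor:lowestcharid}.
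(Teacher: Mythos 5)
Your proposal is correct and follows essentially the same route as the paper: it specialises \autoref{cor:charid} and \autoref{cor:lowestcharid} to $q=2$, $p=2n+1$ (so $3p=6n+3$, confirming the label arithmetic $(A_1,A_{2m})$ with $m=3n$) and transports the resulting character identity through the identifications $\mathbb{V}(\T_{(A_1,D_{2n+1})})\cong L_{-2+2/(2n+1)}(\sl_2)$ and $\mathbb{V}(\T_{(A_1,A_{6n})})\cong L_\mathrm{Vir}(c_{2,6n+3},0)$ from \cite{BR18}, exactly as in \autoref{sec:boundary}. The only cosmetic remark is that the 4d/2d dictionary equates the Schur index with the \emph{super}character $\sch^*_{\mathbb{V}(\T)}(q)$ rather than the character, but since both vacuum modules here are purely even this distinction is vacuous, and your fugacity and normalisation bookkeeping is precisely what the paper implicitly relies on.
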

We ask if there is a physical interpretation of this Schur index identity, similar to \cite{BN22,KLS21} (see \autoref{prob:argyresdouglas})?

\medskip

\noindent\emph{Classical Freeness.} The boundary admissible cases are also interesting from a mathematical point of view. First of all, we note that it was shown in \cite{KW17} that the characters of the irreducible modules $L_{-2+2/p}(\mu_{0,s})$ for $L_{-2+2/p}(\sl_2)$ satisfy a nice product formula involving the Jacobi theta function $\vartheta_{11}(w,q)$. With our character identities in \autoref{cor:charid} and \autoref{cor:lowestcharid}, it follows immediately that this is also the case for the Virasoro \voa{} $L_\mathrm{Vir}(c_{2,3p},0)$.

Furthermore, it is expected that the \voa{}s $L_{-2+2/p}(\sl_2)$ for odd $p\in\Ns$ are classically free in the sense of \cite{EH21,AEH23}. On the other hand, it was proved in \cite{EH21} that the \voa{}s $L_\mathrm{Vir}(c_{2,p},0)$ are classically free. We believe that (some choice of) our vector-space isomorphism between the \voa{}s $\smash{L_{-2+2/p}(\sl_2)}$ and $\smash{L_\mathrm{Vir}(c_{2,3p},0)}$ can be used to prove the classical freeness of these affine \voa{}s (see \autoref{rem:classicallyfree}).


\subsection*{Notation}

All vector spaces, Lie algebras, vertex (operator) algebras, etc.\ are over the base field $\C$.

Denote by $(a;q)_\infty=\prod_{n=0}^\infty(1-aq^n)$ the infinite $q$-Pochhammer symbol and by $\eta(q)=\smash{q^{1/24}(q;q)_\infty}$ the Dedekind eta function. We shall also be using the standard Jacobi theta functions
{\allowdisplaybreaks
\begin{align*}
\vartheta_{00}(w,q)&=\vartheta(w,q)=\sum_{n\in\Z}w^nq^{n^2/2}\\
&=(q;q)_\infty(-wq^{1/2};q)_\infty(-w^{-1}q^{1/2};q)_\infty,\\
\vartheta_{01}(w,q)&=\sum_{n\in\Z}(-1)^nw^nq^{n^2/2}\\
&=(q;q)_\infty(wq^{1/2};q)_\infty(w^{-1}q^{1/2};q)_\infty,\\
\vartheta_{11}(w,q)&=\i\sum_{n\in\Z}(-1)^nw^{n+1/2}q^{(n+1/2)^2/2}\\
&=\i w^{1/2}q^{1/8}(q;q)_\infty(wq;q)_\infty(w^{-1};q)_\infty,
\end{align*}
}%
as well as the theta functions
\begin{equation*}
\theta_{n,m}(w,q)\coloneqq\sum_{j\in n/m+\Z}q^{mj^2/2}w^{mj}
\end{equation*}
for $m,n\in\Z$ from \cite{KW88}. Throughout, we shall treat $w$ and $q$ as formal variables, ignoring questions of convergence.


\subsection*{Acknowledgements}

We are happy to thank Thomas Creutzig, Jeffrey Harvey, Reimundo Heluani, Victor Kac, Ching Hung Lam, Antun Milas, Brandon Rayhaun, Kaiwen Sun and Qing Wang for helpful comments and discussions. We also thank the referee for their helpful remarks.

Sven Möller acknowledges support from the German Research Foundation (DFG) through the Emmy Noether Programme and the CRC 1624 ``Higher Structures, Moduli Spaces and Integrability'', project nos. 460925688 and 506632645.

Dra\v{z}en Adamovi\'{c} is partially supported by the Croatian Science Foundation under the project IP-2022-10-9006 and by the project﻿ ``Implementation of
cutting-edge research and its application as part of the Scientific Center of Excellence for Quantum and Complex Systems, and Representations of Lie Algebras'', grant no. PK.1.1.10.0004, co-financed by the European Union through the European Regional Development Fund -- Competitiveness and Cohesion Programme 2021-2027.

Sven Möller thanks the Department of Mathematics at the University of Zagreb for its hospitality and support during his research stay, during which part of this work was carried out. Dra\v{z}en Adamovi\'{c} thanks the Department of Mathematics at the University of Hamburg for its hospitality during several recent visits.


\section{Vertex Operator Algebras and Their Representations}\label{sec:reps}

In this work, we shall compare the formal characters (i.e.\ graded dimensions) of modules of the simple affine \voa{}s $L_k(\sl_2)$ of level $k\in\C\setminus\{-2\}$ with those of modules of the simple Virasoro \voa{}s $L_\mathrm{Vir}(c,0)$ of central charge $c\in\C$.

For the latter, we look at ordinary modules~$M$ (with $L_0$ acting semisimply, $L_0$-eigenvalues bounded from below and finite-dimensional $L_0$-eigenspaces). For these, it makes sense to consider the usual formal $q$-characters $\ch^*_M(q)\coloneqq\tr_Mq^{L_0}$, here, for convenience, defined without the usual modular correction factor $q^{-c/24}$.

For the simple affine \voa{}s $L_k(\sl_2)$, we consider modules $M$ for which $L_0$ and $h_0$ (fixing a choice of $\sl_2$-triple $\{e,f,h\}$ in $\sl_2$) act semisimply on $M$ (and commute), with finite-dimensional simultaneous eigenspaces, and the $L_0$-eigenvalues again bounded from below. For these modules, we can consider the formal ``Jacobi'' character $\ch^*_M(w,q)\coloneqq\tr_Mq^{L_0}w^{h_0}$. As these modules are allowed to have infinite-dimensional $L_0$-eigenspaces, they are not necessarily ordinary modules anymore, but they are $\N$-gradable.\footnote{Recall that for \voa{}s, especially in the context of defining (strong) rationality, one typically considers weak, $\N$-gradable (also called $\N$-graded or admissible, which we avoid in order to avoid confusion) and ordinary modules; see, e.g., \cite{DLM97}.}

In \autoref{sec:relaxed}, we shall briefly also consider ($\N$-gradable or weak) modules for the universal vertex (operator) algebras $V^k(\sl_2)$ and $V_\mathrm{Vir}(c,0)$ that do not necessarily admit formal characters anymore in the above sense.

\subsection{Affine \VOA{}s for \texorpdfstring{$\sl_2$}{sl\_2}}\label{sec:affine}

We consider the universal affine \voa{} $V^k(\sl_2)$ of level $k\in\C\setminus\{-2\}$, which has central charge $c=3k/(k+2)$ and is neither rational nor $C_2$-cofinite. For a general introduction to the representation theory of affine \voa{}s, we refer the reader to \cite{LL04}. We denote by $L_k(\sl_2)$ the unique simple quotient of $V^k(\sl_2)$. Generically, $L_k(\sl_2)=V^k(\sl_2)$. However, $V^k(\sl_2)$ is not simple if and only if
\begin{equation*}
k=k_{q,p}\coloneqq-2+q/p\quad\text{with}\quad q,p\in\Z, q\geq2, p\geq1\text{ and }(p,q)=1
\end{equation*}
\cite{GK07}, in which case the central charge is $c=3-6p/q$. These levels $k=-2+q/p$ are called \emph{admissible levels}. Equivalently, $L_k(\sl_2)$ is an admissible module for $\smash{\hat\sl_2}$ \cite{KW88,KW89,KW08}. Indeed, for an \emph{admissible level} $k=-2+q/p$, the universal affine \voa{} $V^k(\sl_2)$ possesses a nontrivial singular vector $\raisebox{.5pt}{\textcircled{\raisebox{-.9pt}{2}}}$ (see below) of $h_0$-weight $2(q-1)$ and $L_0$-weight $p(q-1)$.

The admissible cases can be further split into the \emph{integral} cases, where $p=1$, and the remaining ones. In the integral case, meaning exactly that $k\in\N$, the simple affine \voa{} $L_k(\sl_2)$ is \strat{} and called \emph{Wess-Zumino-Witten model}. On the other hand, if $p>1$, or equivalently if $k=k_{q,p}\notin\N$, then $L_k(\sl_2)$ is neither rational nor $C_2$-cofinite.

Of the nonadmissible cases, we shall only study the \emph{near-admissible} cases of level $k=k_{1,p}\coloneqq-2+1/p$ for $p\in\Ns$, which still have some nice properties. As stated above, $\smash{L_{-2+1/p}(\sl_2)=V^{-2+1/p}(\sl_2)}$ is neither rational nor $C_2$-cofinite.

\medskip

In the following, we review the representation theory of $V^k(\sl_2)$ and $L_k(\sl_2)$, depending on the level $k$. To explain the notation, let $\Lambda_0$ and $\Lambda_1$ denote the fundamental weights of $\smash{\hat\sl_2}=\sl_2\otimes\C[t,t^{-1}]\oplus\C K$, so that any weight $\lambda$ for $\smash{\hat{\sl}_2}$ can be written as a complex linear combination of $\Lambda_0$ and $\Lambda_1$. As we fix the level $k$ of the $\smash{\hat{\sl}_2}$-modules, we can write $\lambda=(k-\mu)\Lambda_0+\mu\Lambda_1$ for some $\mu\in\C$. It is common to only use $\mu$ (often also called~$j$) rather than $\lambda$ to label (certain) modules of $\smash{\hat{\sl}_2}$ and $V^k(\sl_2)$.

In general, the representation theory of $V^k(\sl_2)$ and $L_k(\sl_2)$ is quite complicated; see, e.g., \cite{AM95,DLM97c,RW15,KR19}. Any module for $L_k(\sl_2)$ is naturally also a module for $V^k(\sl_2)$, but the converse is usually not the case if $V^k(\sl_2)\neq L_k(\sl_2)$, i.e.\ for an admissible level. We shall discuss the admissible case in more detail below, in particular which $V^k(\sl_2)$-modules survive as $L_k(\sl_2)$-modules.

A common way to obtain $V^k(\sl_2)$-modules is to start with an irreducible $\smash{\sl_2}$-module~$U$, consider it as a module for the Lie subalgebra $P=\smash{\smash{\hat{\sl}}_2^{\geq0}}=\sl_2\otimes\C[t]\oplus\C K$ of $\smash{\hat\sl_2}=\sl_2\otimes\C[t,t^{-1}]\oplus\C K$ such that $\sl_2\otimes t\C[t]$ acts trivially and $K$ as $k\id$ and induce it up to a (smooth) $\smash{\hat\sl_2}$-module $\smash{V^k(U)=U(\hat{\sl}_2)\otimes_{U(P)}U}$, which will then be a module for $V^k(\sl_2)$ as well. (There are also other types of induction to obtain $\smash{\hat\sl_2}$-modules, which we mention briefly in the following as well.) $V^k(U)$ may or may not be irreducible, but we can form the irreducible quotient $L_k(U)$ of $V^k(U)$. Restricting to weight modules of $\sl_2$ \cite{Fer90,Mat00} (see also \cite{Maz10}), we obtain the following, denoting by $\Lambda_1$ the fundamental weight of $\sl_2$.
\begin{enumerate}[wide]
\item\label{item:1} Finite-dimensional modules: Suppose $U$ is the $(r+1)$-dimensional irreducible module of $\sl_2$ for $r\in\N$. Then $U$ has highest weight~$\mu\Lambda_1=r\Lambda_1$ (and lowest weight~$-r\Lambda_1$). We denote the induced modules by $V^k(r)$. They are sometimes called \emph{Weyl modules}. These are ordinary $V^k(\sl_2)$-modules, i.e.\ they have finite-dimensional $L_0$-weight spaces in particular. For $r=0$, $V^k(0)=V^k(\sl_2)$ is the vacuum module, i.e.\ recovers the universal affine \voa{} itself.

We can form the irreducible quotients, which we denote by $L_k(r)$. The modules $L_k(r)$ for $r\in\N$ exhaust all irreducible ordinary $V^k(\sl_2)$-modules up to isomorphism. $L_k(0)=L_k(\sl_2)$ is the simple affine \voa{} itself.

\item Highest-weight modules: Suppose that $U$ is an infinite-dimensional highest-weight Verma module with highest weight $\mu\Lambda_1$ for $\mu\notin\N$. We denote the induced $\smash{\hat\sl_2}$-modules by $M_k(\mu)$. They are usually called \emph{Verma modules} as well. They can also be constructed by inducing up a one-dimensional module $\C_\mu$ of the standard Borel subalgebra of $\smash{\hat\sl_2}$, on which $h_0$ acts as $\mu$. Using the latter construction, we obtain Verma modules $M_k(\mu)$ for all $\mu\in\C$. Again, we can consider the irreducible quotients, which we denote by $L_k(\mu)$.

The Weyl module $V^k(r)$ for $r\in\N$ considered in \eqref{item:1} is a proper quotient of $M_k(r)$, by a submodule generated by one singular vector \raisebox{.5pt}{\textcircled{\raisebox{-.9pt}{1}}}, which we describe below. In particular, the irreducible quotient $L_k(r)$ considered in \eqref{item:1} coincides with the one here, justifying our notation.

The Verma modules $M_k(\mu)$ are not ordinary modules of $V^k(\sl_2)$ but they are $\N$-gradable, and neither are their irreducible quotients (unless $\mu\in\N$ in the admissible case; see below).

\item Lowest-weight modules: Suppose that $U$ is an infinite-dimensional lowest-weight Verma module with lowest weight $-\mu\Lambda_1$ for $\mu\notin\N$. Then we can consider the induced $\smash{\hat\sl_2}$-module $M^-_k(-\mu)$, which we call \emph{lowest-weight Verma module} for $V^k(\sl_2)$. It can also be constructed by inducing up a one-dimensional module of the Borel subalgebra of $\smash{\hat\sl_2}$ that is obtained by applying the Weyl reflection to the standard Borel subalgebra. Again, we can also consider the irreducible quotients $L^-_k(-\mu)$. As the Verma modules above, these modules (and their irreducible quotients unless $\mu\in\N$ in the admissible case) are not ordinary, but only $\N$-gradable $V^k(\sl_2)$-modules.

The Weyl module $V^k(r)$ for $r\in\N$ considered in \eqref{item:1} is a proper quotient of both $M_k(r)$ and $M^-_k(-r)$. Hence, the irreducible quotients satisfy $L_k(r)=L^-_k(-r)$.

\item Dense modules: further infinite-dimensional weight modules that are neither highest nor lowest weight, called dense modules. The induced modules for $V^k(\sl_2)$ (or their irreducible quotients) are called \emph{relaxed highest-weight modules}. In this work, we shall only treat them briefly in \autoref{sec:relaxed}.
\end{enumerate}

For later reference, we note that the highest-weight vector of the Verma module $M_k(\mu)$ for $\mu\in\C$ has $(h_0,L_0)$-weight $(\mu,\mu(\mu+2)/(4(k+2)))$ and the character is
{\allowdisplaybreaks
\begin{align*}
&\ch^*_{M_k(\mu)}(w,q)=\tr_{M_k(\mu)}q^{L_0}w^{h_0}\\
&=\frac{w^\mu}{1-w^{-2}}\frac{q^{\mu(\mu+2)/(4(k+2))}}{\prod_{n=1}^\infty(1-w^2q^n)(1-q^n)(1-w^{-2}q^n)}\\
&=\frac{w^\mu q^{\mu(\mu+2)/(4(k+2))}}{\prod_{n=1}^\infty(1-w^2q^n)(1-q^n)(1-w^{-2}q^{n-1})}\\
&=\frac{w^\mu q^{\mu(\mu+2)/(4(k+2))}}{(w^2q;q)_\infty(q;q)_\infty(w^{-2};q)_\infty}.
\end{align*}
}%
Similarly, the lowest-weight vector of the Verma module $M^-_k(-\mu)$ for $\mu\in\C$ has $(h_0,L_0)$-weight $(-\mu,\mu(\mu+2)/(4(k+2)))$ so that the character is
\begin{equation*}
\ch^*_{M^-_k(-\mu)}(w,q)=\frac{w^{-\mu}q^{\mu(\mu+2)/(4(k+2))}}{(w^2;q)_\infty(q;q)_\infty(w^{-2}q;q)_\infty}.
\end{equation*}
We note that $M^-_k(-\mu)$ can be obtained from $M_k(\mu)$ by composing the action of $V^k(\sl_2)$ on $M_k(\mu)$ with the inner automorphism of $V^k(\sl_2)$ lifted from the inner automorphism $e\mapsto f$, $f\mapsto e$, $h\mapsto -h$ of $\sl_2$. On the level of the characters, this amounts to replacing $w\mapsto w^{-1}$ and $q\mapsto q$.


\subsubsection{Admissible Case}

We now consider the simple affine \voa{}s $L_{-2+q/p}(\sl_2)$ of admissible levels $k=-2+q/p$ with $q,p\in\Z$, $q\geq2$, $p\geq1$ and $(p,q)=1$, i.e.\ when $V^k(\sl_2)\neq L_k(\sl_2)$. The central charge is $c=3-6p/q$.

Of the irreducible $V^k(\sl_2)$-modules discussed above, the ones that are $L_k(\sl_2)$-modules are the following \cite{AM95,RW15}, ignoring from now on the relaxed highest-weight modules, which we will return to briefly in \autoref{sec:relaxed}.

\begin{enumerate}
\item\label{item:adm1} The irreducible quotients $L_{-2+q/p}(\mu_{r,0})$ of the Weyl modules, both highest- and lowest-weight modules,
\item\label{item:adm2} The highest-weight modules $L_{-2+q/p}(\mu_{r,s})$,
\item\label{item:adm3} The lowest-weight modules $L^-_{-2+q/p}(-\mu_{r,s})$
\end{enumerate}
for $r,s\in\Z$ with $0\leq r\leq q-2$ and $1\leq s\leq p-1$ where $\mu_{r,s}=r-qs/p$ or equivalently the full $\smash{\hat\sl_2}$-weight
\begin{equation*}
\lambda_{r,s}=\lambda_r^\text{int}-\frac{q}{p}\lambda_s^\text{fr}
\end{equation*}
with
\begin{equation*}
\lambda_r^\text{int}=(q-r-2)\Lambda_0+r\Lambda_1,\quad\lambda_s^\text{fr}=(p-s-1)\Lambda_0+s\Lambda_1,
\end{equation*}
recalling that $\Lambda_0$ and $\Lambda_1$ are the fundamental weights of $\smash{\hat\sl_2}$.

The irreducible highest-weight modules $L_{-2+q/p}(\mu_{r,s})$ in \eqref{item:adm1} and \eqref{item:adm2} are exactly the irreducible admissible (or equivalently modular invariant) modules of $\smash{\hat{\sl}_2}$, which are considered in \cite{KW89}. They are indexed by the highest weights $\lambda_{r,s}$ with $0\leq r\leq q-2$ and $0\leq s\leq p-1$. We mostly focus on these modules in the following, noting that the lowest-weight modules in \eqref{item:adm3} can be described completely analogously.

The $L_0$-grading of these modules is bounded from below. Indeed, the lowest $L_0$-eigenvalue of $L_{-2+q/p}(\mu_{r,s})$ is
\begin{equation*}
\ell_{r,s}^{q,p}\coloneqq\frac{(p(r+1)-qs)^2-p^2}{4pq}.
\end{equation*}
However, the $L_0$-eigenspaces are usually not finite-dimensional, meaning that these irreducible modules are in general not ordinary modules (but they are all $\N$-gradable). As we explained above, $L_{-2+q/p}(\mu_{r,s})$ is an ordinary $L_{-2+q/p}(\sl_2)$-module if and only if $s=0$, i.e.\ if $\mu_{r,s}$, the coefficient of $\Lambda_1$ in $\lambda_{r,s}$, is an integer. As a special case, for $r=s=0$, i.e.\ $\mu_{r,s}=0$, we recover the vacuum module $L_{-2+q/p}(0)=L_{-2+q/p}(\sl_2)$.

\medskip

The irreducible module $L_{-2+q/p}(\mu_{r,s})$ can be obtained from the highest-weight Verma module $M_{-2+q/p}(\mu_{r,s})$ by taking the quotient by the maximal proper submodule of $M_{-2+q/p}(\mu_{r,s})$, which is generated by two singular vectors of $(h_0,L_0)$-weights $(r-qs/p,\ell_{r,s}^{q,p})$ plus
\begin{equation*}
\raisebox{.5pt}{\textcircled{\raisebox{-.9pt}{1}}}~(-2(r+1),s(r+1))\quad\text{and}\quad\raisebox{.5pt}{\textcircled{\raisebox{-.9pt}{2}}}~(2(q-r-1),(p-s)(q-r-1)).
\end{equation*}
For the ordinary modules $L_{-2+q/p}(\mu_{r,0})=L_{-2+q/p}(r)$, i.e.\ for $s=0$, this becomes $(r,\ell_{r,0}^{q,p})$ plus
\begin{equation*}
\raisebox{.5pt}{\textcircled{\raisebox{-.9pt}{1}}}~(-2(r+1),0)\quad\text{and}\quad\raisebox{.5pt}{\textcircled{\raisebox{-.9pt}{2}}}~(2(q-r-1),p(q-r-1)).
\end{equation*}
Here, taking the quotient by the submodule in $\smash{M_{-2+q/p}(\mu_{r,0})=M_{-2+q/p}(r)}$ generated by the vector~\raisebox{.5pt}{\textcircled{\raisebox{-.9pt}{1}}} yields the Weyl module $\smash{V^{-2+q/p}(r)}$ and taking the quotient of this again by \raisebox{.5pt}{\textcircled{\raisebox{-.9pt}{2}}} yields $\smash{L_{-2+q/p}(\mu_{r,0})=L_{-2+q/p}(r)}$.

In particular, for $r=0$ we recover the statement from above about the singular vector~\raisebox{.5pt}{\textcircled{\raisebox{-.9pt}{2}}} of weight $(2(q-1),p(q-1))$ in the universal affine \voa{} $V^{-2+q/p}(\sl_2)$ that yields $L_{-2+q/p}(\sl_2)$ upon taking the quotient by the submodule it generates.

\medskip

We briefly comment on the lowest-weight modules $\smash{L^-_{-2+q/p}(-\mu_{r,s})}$ as well. They are obtained from the lowest-weight Verma modules $\smash{M^-_{-2+q/p}(-\mu_{r,s})}$ by taking the quotient by two singular vectors of $(h_0,L_0)$-weights $(-r+qs/p,\ell_{r,s}^{q,p})$ plus
\begin{equation*}
\raisebox{.5pt}{\textcircled{\raisebox{-.9pt}{1}}}~(2(r+1),s(r+1))\quad\text{and}\quad\raisebox{.5pt}{\textcircled{\raisebox{-.9pt}{2}}}~(-2(q-r-1),(p-s)(q-r-1)).
\end{equation*}
For the ordinary modules $\smash{L^-_{-2+q/p}(-\mu_{r,0})=L_{-2+q/p}(\mu_{r,0})}$ this is $(-r,\ell_{r,0}^{q,p})$ plus
\begin{equation*}
\raisebox{.5pt}{\textcircled{\raisebox{-.9pt}{1}}}~(2(r+1),0)\quad\text{and}\quad\raisebox{.5pt}{\textcircled{\raisebox{-.9pt}{2}}}~(-2(q-r-1),p(q-r-1)).
\end{equation*}
Taking the quotient by the submodule in $\smash{M^-_{-2+q/p}(-\mu_{r,0})}$ generated by the singular vector~\raisebox{.5pt}{\textcircled{\raisebox{-.9pt}{1}}} yields the Weyl module $V^{-2+q/p}(r)$ and taking the quotient again by \raisebox{.5pt}{\textcircled{\raisebox{-.9pt}{2}}} yields $\smash{L^-_{-2+q/p}(-\mu_{r,0})=L_{-2+q/p}(\mu_{r,0})}$.

\medskip

We summarise the various $L_{-2+q/p}(\sl_2)$-modules in \autoref{fig:admrep}. Here, single arrows correspond to taking the quotient by a single singular vector, double arrows to taking the quotient by two.
\begin{figure}[ht]
\begin{tikzcd}[column sep=-2pt]
\{M_{-2+q/p}(\mu_{r,s})\}\arrow[Rightarrow]{dd}{\raisebox{.5pt}{\textcircled{\raisebox{-.9pt}{1}}},\,\raisebox{.5pt}{\textcircled{\raisebox{-.9pt}{2}}}} &\makebox[0pt]{$\supseteq$}& \{M_{-2+q/p}(\mu_{r,0})\}\arrow[shorten >= 10pt]{dr}{\raisebox{.5pt}{\textcircled{\raisebox{-.9pt}{1}}}} && \{M^-_{-2+q/p}(-\mu_{r,0})\}\arrow[shorten >= 10pt]{dl}[swap]{\raisebox{.5pt}{\textcircled{\raisebox{-.9pt}{1}}}} &\makebox[0pt]{$\subseteq$}& \{M^-_{-2+q/p}(-\mu_{r,s})\}\arrow[Rightarrow]{dd}{\raisebox{.5pt}{\textcircled{\raisebox{-.9pt}{1}}},\,\raisebox{.5pt}{\textcircled{\raisebox{-.9pt}{2}}}}\\
&&&\makebox[0pt]{$\{V^{-2+q/p}(r)\}$}\arrow{d}{\raisebox{.5pt}{\textcircled{\raisebox{-.9pt}{2}}}}\\
\{L_{-2+q/p}(\mu_{r,s})\} &\makebox[0pt]{$\supseteq$}&& \makebox[0pt]{$\hphantom{-}\{L_{-2+q/p}(\mu_{r,0})=L^-_{-2+q/p}(-\mu_{r,0})\}$} &&\makebox[0pt]{$\subseteq$}& \{L^-_{-2+q/p}(-\mu_{r,s})\}
\end{tikzcd}
\caption{Representations of admissible level $L_{-2+q/p}(\sl_2)$.}
\label{fig:admrep}
\end{figure}

\medskip

The formal characters of the irreducible highest-weight modules $L_{-2+q/p}(\mu_{r,s})$ for $L_{-2+q/p}(\sl_2)$ are of the form \cite{KW88}
{\allowdisplaybreaks
\begin{align*}
&\ch^*_{L_{-2+q/p}(\mu_{r,s})}(w,q)=\tr_{L_{-2+q/p}(\mu_{r,s})}q^{L_0}w^{h_0}\\
&=q^{1/8-p/4q}\frac{\theta_{b_+,2a}(w^{1/p},q)-\theta_{b_-,2a}(w^{1/p},q)}{\theta_{1,4}(w,q)-\theta_{-1,4}(w,q)}\\
&=q^{1/8-p/4q}\frac{\theta_{b_+,2a}(w^{1/p},q)-\theta_{b_-,2a}(w^{1/p},q)}{wq^{1/8}\prod_{n=1}^\infty(1-w^2q^n)(1-q^n)(1-w^{-2}q^{n-1})}\\
&=q^{1/8-p/4q}\frac{\theta_{b_+,2a}(w^{1/p},q)-\theta_{b_-,2a}(w^{1/p},q)}{wq^{1/8}(w^2q;q)_\infty(q;q)_\infty(w^{-2};q)_\infty}
\end{align*}
}%
with $a=pq$ and $b_\pm=\pm p(r+1)-qs$. On the other hand, the characters of the irreducible lowest-weight modules $\smash{L^-_{-2+q/p}(-\mu_{r,s})}$ for $\smash{L_{-2+q/p}(\sl_2)}$ are
\begin{align*}
\ch^*_{L^-_{-2+q/p}(-\mu_{r,s})}(w,q)&=q^{1/8-p/4q}\frac{\theta_{-b_+,2a}(w^{1/p},q)-\theta_{-b_-,2a}(w^{1/p},q)}{\theta_{-1,4}(w,q)-\theta_{1,4}(w,q)}
\end{align*}
with $a=pq$ and $b_\pm=\pm p(r+1)-qs$. We note that for $s=0$, this character coincides with that of $L_{-2+q/p}(\mu_{r,0})$, as it should.

\subsubsection*{Boundary Admissible Case \texorpdfstring{$q=2$}{q=2}}

The characters of the irreducible $L_{-2+q/p}(\sl_2)$-modules simplify in the \emph{boundary admissible} case $k=-2+2/p$, i.e.\ for $q=2$, the smallest possible admissible value of~$q$. Then $p\in\Ns$ must be odd. The central charge is $c=3-3p$. In that case, the $p$ irreducible modules are $L_{-2+2/p}(\mu_{0,s})$ for $0\leq s\leq p-1$. Their characters take the simpler form \cite{KW17}
\begin{align*}
\ch^*_{L_{-2+2/p}(\mu_{0,s})}(w,q)&=q^{(1-p)/8}w^{-2s/p}q^{s^2/2p}\frac{\vartheta_{11}(w^{-2}q^s,q^p)}{\vartheta_{11}(w^{-2},q)}\\
&=q^{(1-p)/8}w^{-2s/p}q^{s^2/2p}\frac{\vartheta_{11}(w^2q^{-s},q^p)}{\vartheta_{11}(w^2,q)}.
\end{align*}
The factor of $q^{c/24}=q^{(1-p)/8}$ is present compared to \cite{KW17} because we do not include the factor $q^{-c/24}$ to make the characters modular invariant. Similarly, the characters of the lowest-weight modules are
\begin{align*}
\ch^*_{L^-_{-2+2/p}(-\mu_{0,s})}(w,q)&=q^{(1-p)/8}w^{2s/p}q^{s^2/2p}\frac{\vartheta_{11}(w^2q^s,q^p)}{\vartheta_{11}(w^2,q)}\\
&=q^{(1-p)/8}w^{2s/p}q^{s^2/2p}\frac{\vartheta_{11}(w^{-2}q^{-s},q^p)}{\vartheta_{11}(w^{-2},q)}
\end{align*}
for $0\leq s\leq p-1$.

\subsubsection*{Integral Case \texorpdfstring{$p=1$}{p=1}}

We discuss the special (admissible) case of $p=1$, referred to as the integral case or the Wess-Zumino-Witten models. Here, $L_{-2+q}(\sl_2)$ for $q\geq2$ has level $k=-2+q$ and central charge $c=3-6/q$ and is \strat{}. This implies that all $\N$-gradable (in fact, all weak) $L_{-2+q}(\sl_2)$-modules are completely reducible with the only irreducible modules being ordinary modules. As we explained above, the $q-1$ many irreducible (ordinary) modules are exactly the $L_{-2+q}(\mu_{r,0})=L_{-2+q}(r)$ for $0\leq r\leq q-2$ with lowest $L_0$-weights
\begin{equation*}
\ell_{r,0}^{q,1}=\frac{r(r+2)}{4q}.
\end{equation*}
The representation category $\Rep(L_{-2+q}(\sl_2))$ is a modular tensor category, whose fusion rules are
\begin{equation*}
L_{-2+q}(\mu_{r_1,0})\boxtimes L_{-2+q}(\mu_{r_2,0})\cong\bigoplus_{\substack{r_3=|r_1-r_2|\\r_3\in|r_1-r_2|+2\Z}}^{\min(r_1+r_2,-4+2q-r_1-r_2)}L_{-2+q}(\mu_{r_3,0})
\end{equation*}
for $0\leq r_1\leq q-2$ and $0\leq r_2\leq q-2$.


\subsubsection{Near-Admissible Case \texorpdfstring{$q=1$}{q=1}}

Of the nonadmissible affine \voa{}s $V^k(\sl_2)=L_k(\sl_2)$, we only consider the case of $k=-2+1/p$ for $p\in\Ns$, which we call near-admissible levels. By abuse of notation, we can think of this as setting $q=1$ in the admissible levels $k_{q,p}=-2+q/p$.

We described all the irreducible weight modules for $V^{-2+1/p}(\sl_2)=L_{-2+1/p}(\sl_2)$ above. Ignoring (for now) again the relaxed highest-weight modules, these are:
\begin{enumerate}
\item The irreducible quotients $L_{-2+1/p}(r)$ of the Weyl modules for $r\in\N$, both highest- and lowest-weight modules,
\item The highest-weight modules $L_{-2+1/p}(\mu)$ for $\mu\notin\N$,
\item The lowest-weight modules $L^-_{-2+1/p}(-\mu)$ for $\mu\notin\N$.
\end{enumerate}

All the Weyl modules are irreducible, i.e.\ $V^{-2+1/p}(r)=L_{-2+1/p}(r)$ for $r\in\N$ \cite{Cre17}. For $r=0$, we recover the vacuum module $L_{-2+1/p}(0)=L_{-2+1/p}(\sl_2)$.

By extending the notation from the admissible case, these ordinary modules have the highest weight $\mu_{r,0}=r$ (or $\lambda_{r,0}=(-2+q/p-r)\Lambda_0+r\Lambda_1$) and smallest $L_0$-eigenvalue
\begin{equation*}
\ell_{r,0}^{1,p}=\frac{pr(r+2)}{4}=\frac{r(r+2)}{4(k+2)}
\end{equation*}
for $r\in\N$. They can be obtained from the Verma modules $M_{-2+1/p}(r)$ by taking the quotient by a singular vector of $(h_0,L_0)$-weight
\begin{equation*}
\raisebox{.5pt}{\textcircled{\raisebox{-.9pt}{1}}}~(r,\ell_{r,0}^{1,p})+(-2(r+1),0)=(-r-2,\ell_{r,0}^{1,p}).
\end{equation*}
This formally corresponds to the first singular vector~$\raisebox{.5pt}{\textcircled{\raisebox{-.9pt}{1}}}$ in the Verma module of highest weight $\mu_{r,s}$ (with $s=0$) in the admissible case discussed above.

On the other hand, the irreducible Weyl modules $\smash{V^{-2+1/p}(r)=L^-_{-2+1/p}(-r)}$ have lowest weight $-\mu_{r,0}=-r$ for $r\in\N$. They can also be obtained from the lowest-weight Verma modules $\smash{M^-_{-2+1/p}(-r)}$ by taking the quotient by the submodule generated by a singular vector of $(h_0,L_0)$-weight
\begin{equation*}
\raisebox{.5pt}{\textcircled{\raisebox{-.9pt}{1}}}~(-r,\ell_{r,0}^{1,p})+(2(r+1),0)=(r+2,\ell_{r,0}^{1,p}).
\end{equation*}

The Verma modules $M_{-2+1/p}(\mu)$ for generic $\mu\notin\N$ are already irreducible, i.e.\ $M_{-2+1/p}(\mu)=L_{-2+1/p}(\mu)$. The lowest-weight Verma modules $\smash{M^-_{-2+1/p}(-\mu)}$ for generic $\mu\notin\N$ are also irreducible, i.e.\ $\smash{M^-_{-2+1/p}(-\mu)=L^-_{-2+1/p}(-\mu)}$.

We summarise the various $V^{-2+1/p}(\sl_2)$-modules in \autoref{fig:nearadmrep}. Again, single arrows correspond to taking the quotient by one singular vector.
\begin{figure}[ht]
\begin{tikzcd}[column sep=-2pt]
\mu\notin\N{:}&&\makebox[0pt]{$r\in\N$:}&&\mu\notin\N{:}\\[-10pt]
&\makebox[0pt]{$M_{-2+1/p}(\mu_{r,0})$}\arrow[shorten <= 5pt]{dr}[swap]{\raisebox{.5pt}{\textcircled{\raisebox{-.9pt}{1}}}}&&\makebox[0pt]{$M^-_{-2+1/p}(-\mu_{r,0})$}\arrow[shorten <= 5pt]{dl}{\raisebox{.5pt}{\textcircled{\raisebox{-.9pt}{1}}}}\\
M_{-2+1/p}(\mu)=L_{-2+1/p}(\mu)&&\begin{array}{c}V^{-2+1/p}(r)\\=L_{-2+1/p}(r)\\=L^-_{-2+1/p}(-r)\end{array}&&M^-_{-2+1/p}(-\mu)=L^-_{-2+1/p}(-\mu)
\end{tikzcd}
\caption{Representations of near-admissible level $L_{-2+1/p}(\sl_2)$.}
\label{fig:nearadmrep}
\end{figure}

The characters of the irreducible Verma modules $M_{-2+1/p}(\mu)=L_{-2+1/p}(\mu)$ are
\begin{align*}
&\ch^*_{M_{-2+1/p}(\mu)}(w,q)\\
&=\frac{w^\mu}{1-w^{-2}}\frac{q^{\ell_{\mu,0}^{1,p}}}{\prod_{n=1}^\infty(1-w^2q^n)(1-q^n)(1-w^{-2}q^n)}\\
&=\frac{w^\mu q^{\ell_{\mu,0}^{1,p}}}{(w^2q;q)_\infty(q;q)_\infty(w^{-2};q)_\infty}
\end{align*}
for $\mu\in\C\setminus\N$. Furthermore, the characters of the irreducible Weyl modules $\smash{V^{-2+1/p}(r)=L_{-2+1/p}(r)=L^-_{-2+1/p}(-r)}$ for $r\in\N$ are
\begin{align*}
&\ch^*_{V^{-2+1/p}(r)}(w,q)\\
&=\frac{w^{r+1}-w^{-(r+1)}}{w-w^{-1}}\frac{q^{\ell_{r,0}^{1,p}}}{\prod_{n=1}^\infty(1-w^2q^n)(1-q^n)(1-w^{-2}q^n)}\\
&=(w^r-w^{-(r+2)})\frac{q^{\ell_{r,0}^{1,p}}}{(w^2q;q)_\infty(q;q)_\infty(w^{-2};q)_\infty}.
\end{align*}
For the lowest-weight Verma modules $\smash{M^-_{-2+1/p}(-\mu)=L^-_{-2+1/p}(-\mu)}$ they are
\begin{equation*}
\ch^*_{M^-_{-2+1/p}(-\mu)}(w,q)=\frac{w^{-\mu}q^{\ell_{\mu,0}^{1,p}}}{(w^2;q)_\infty(q;q)_\infty(w^{-2}q;q)_\infty}
\end{equation*}
for $\mu\in\C\setminus\N$.


\subsection{Virasoro \VOA{}s}\label{sec:virasoro}

We consider the universal Virasoro \voa{} $V_\mathrm{Vir}(c,0)$ of central charge $c\in\C$, which is neither rational nor $C_2$-cofinite. For a more thorough introduction, we refer the reader to, e.g., \cite{LL04,DFMS97}. Let $L_\mathrm{Vir}(c,0)$ denote the unique simple quotient. Generically, $V_\mathrm{Vir}(c,0)=L_\mathrm{Vir}(c,0)$. One can show that $V_\mathrm{Vir}(c,0)$ is not simple if and only if
\begin{equation*}
c=c_{q,p}\coloneqq 1-6(p-q)^2/(pq)\quad\text{with}\quad p,q\in\Z, p,q\geq2\text{ and }(p,q)=1.
\end{equation*}
Indeed, in that case, there is a nontrivial singular vector~\raisebox{.5pt}{\textcircled{\raisebox{-.9pt}{2}}} in $V_\mathrm{Vir}(c_{q,p},0)$ with $L_0$-weight $(p-1)(q-1)$. The simple Virasoro \voa{}s $L_\mathrm{Vir}(c_{q,p},0)$ for $p$ and $q$ as above are called \emph{minimal models}. They are \strat{}.

We also consider the case of ``$q=1$'' with central charge $c_{1,p}\coloneqq 13-6p-6/p$ for $p\in\Z_{\geq2}$. Then $L_\mathrm{Vir}(c_{1,p},0)=V_\mathrm{Vir}(c_{1,p},0)$ is called \emph{logarithmic $(1,p)$-minimal model}. As stated above, it is neither rational nor $C_2$-cofinite.

\medskip

Let $M_\mathrm{Vir}(c,h)\cong U(\mathcal{L}_{<0})$ be the Verma module of highest $L_0$-weight $h\in\C$ for the Virasoro algebra $\smash{\mathcal{L}=\bigoplus_{s\in\Z}\C L_s\oplus\C C}$. It is constructed from the $1$-dimensional module $\C_{c,h}$ for the subalgebra $\smash{\mathcal{L}_{\geq0}=\bigoplus_{s\in\N}\C L_s\oplus\C C}$ on which $L_0$ acts by $h$, $C$ by $c$ and $\smash{\mathcal{L}_{>0}=\bigoplus_{s\in\Ns}\C L_s}$ trivially and then forming the induced $\mathcal{L}$-module $\smash{M_\mathrm{Vir}(c,h)=U(\mathcal{L})\otimes_{U(\mathcal{L}_{\geq0})}\C_{c,h}}$.

The universal Virasoro \voa{} $V_\mathrm{Vir}(c,0)\cong U(\mathcal{L}_{\leq2})$ is formed from $M_\mathrm{Vir}(c,0)$ by taking the quotient $V_\mathrm{Vir}(c,0)=M_\mathrm{Vir}(c,0)/U(\mathcal{L}_{<0})L_{-1}$ by the right ideal $U(\mathcal{L}_{<0})L_{-1}$ of $U(\mathcal{L}_{<0})$, corresponding to the singular vector \raisebox{.5pt}{\textcircled{\raisebox{-.9pt}{1}}}, here $L_{-1}\vac$, of $L_0$-weight~$1$. Then $V_\mathrm{Vir}(c,0)$ can be equipped with the structure of a \voa{}, which may or may not be simple.

Let $L_\mathrm{Vir}(c,h)$ denote the quotient of $M_\mathrm{Vir}(c,h)$ by the (unique) maximal proper $\mathcal{L}$-submodule. Then $L_\mathrm{Vir}(c,h)$ naturally carries the structure of an irreducible module for the \voa{} $V_\mathrm{Vir}(c,0)$. Moreover, for $h\in\C$ these exhaust all irreducible $V_\mathrm{Vir}(c,0)$-modules up to equivalence.

Hence, if $c\neq c_{q,p}$ for $p,q\in\Z_{\geq2}$, this gives a classification of the irreducible modules of the simple Virasoro \voa{} $L_\mathrm{Vir}(c,0)=V_\mathrm{Vir}(c,0)$. As these are infinitely many, $L_\mathrm{Vir}(c,0)=V_\mathrm{Vir}(c,0)$ is neither rational nor $C_2$-cofinite.

If $c=c_{q,p}$, then not all the irreducible $V_\mathrm{Vir}(c_{q,p},0)$-modules $L_\mathrm{Vir}(c_{q,p},h)$ are (irreducible) modules for $L_\mathrm{Vir}(c_{q,p},0)$, but those (finitely many) that are, exhaust all irreducible $L_\mathrm{Vir}(c_{q,p},0)$-modules up to isomorphism. Indeed, every irreducible $L_\mathrm{Vir}(c_{q,p},0)$-module is also an irreducible $V_\mathrm{Vir}(c_{q,p},0)$-module. We describe these modules $L_\mathrm{Vir}(c_{q,p},h_{r,s}^{q,p})$ in more detail in the following. In this case, $L_\mathrm{Vir}(c_{q,p},0)$ is \strat{}. For $r=s=1$ (or $r=q-1$ and $s=p-1$) we recover the simple \voa{} $L_\mathrm{Vir}(c_{q,p},0)$ itself, i.e.\ the vacuum module.

\medskip

For later reference, we note that the character of the Verma module $M_\mathrm{Vir}(c,h)$ of highest weight $h\in\C$ is
\begin{equation*}
\ch^*_{M_\mathrm{Vir}(c,h)}(q)=\tr_{M_\mathrm{Vir}(c,h)}q^{L_0}=\frac{q^h}{\prod_{n=1}^\infty(1-q^{n})}=\frac{q^h}{(q;q)_\infty}.
\end{equation*}


\subsubsection{Rational Case}

We assume that $c=c_{q,p}$ with $p,q\in\Z_{\geq2}$ and $(p,q)=1$ so that the simple Virasoro \voa{} $L_\mathrm{Vir}(c_{q,p},0)$ is \strat{}. The irreducible modules of $L_\mathrm{Vir}(c_{q,p},0)$ are given by $L_\mathrm{Vir}(c_{q,p},h_{r,s}^{q,p})$ with lowest $L_0$-eigenvalue
\begin{equation*}
h_{r,s}^{q,p}\coloneqq\frac{(qs-pr)^2-(p-q)^2}{4pq}
\end{equation*}
for $r,s\in\Z$ with $1\leq r\leq q-1$ and $1\leq s\leq p-1$. But note that $h_{q-r,p-s}^{q,p}=h_{r,s}^{q,p}$ so that each irreducible module appears twice in this enumeration; and hence there are exactly $(p-1)(q-1)/2$ irreducible modules up to isomorphism. The modular tensor category $\Rep(L_\mathrm{Vir}(c_{q,p},0))$ is described further in \autoref{sec:integral} in the special case of $p=3$, in particular the fusion rules.

The module $\smash{L_\mathrm{Vir}(c_{q,p},h_{r,s}^{q,p})}$ is formed by taking the quotient of the Verma module $\smash{M_\mathrm{Vir}(c_{q,p},h_{r,s}^{q,p})}$ by the submodule generated by two singular vectors of $L_0$-weights
\begin{equation*}
\raisebox{.5pt}{\textcircled{\raisebox{-.9pt}{1}}}~h_{r,s}^{q,p}+rs\quad\text{and}\quad\raisebox{.5pt}{\textcircled{\raisebox{-.9pt}{2}}}~h_{r,s}^{q,p}+(q-r)(p-s);
\end{equation*}
see, e.g., \cite{DFMS97}.

We summarise the various $L_\mathrm{Vir}(c_{q,p},0)$-modules in \autoref{fig:ratrep} and point out the similarity to the representation theory of admissible level $L_{-2+q/p}(\sl_2)$; see \autoref{fig:admrep}. Single arrows correspond to taking the quotient by a single singular vector, double arrows to taking the quotient by two.
\begin{figure}[ht]
\begin{tikzcd}[column sep=0]
\{M_\mathrm{Vir}(c_{q,p},h_{r,s}^{q,p})\}\arrow[Rightarrow]{dd}{\raisebox{.5pt}{\textcircled{\raisebox{-.9pt}{1}}},\,\raisebox{.5pt}{\textcircled{\raisebox{-.9pt}{2}}}} &\makebox[0pt]{$\ni$}& M_\mathrm{Vir}(c_{q,p},0)\arrow{d}{\raisebox{.5pt}{\textcircled{\raisebox{-.9pt}{1}}}}\\
&&V_\mathrm{Vir}(c_{q,p},0)\arrow{d}{\raisebox{.5pt}{\textcircled{\raisebox{-.9pt}{2}}}}\\
\{L_\mathrm{Vir}(c_{q,p},h_{r,s}^{q,p})\} &\makebox[0pt]{$\ni$}& L_\mathrm{Vir}(c_{q,p},0)
\end{tikzcd}
\caption{Representations of rational $L_\mathrm{Vir}(c_{q,p},0)$.}
\label{fig:ratrep}
\end{figure}

The formal characters are
\begin{align*}
&\ch^*_{L_\mathrm{Vir}(c_{q,p},h_{r,s}^{q,p})}(q)=\tr_{L_\mathrm{Vir}(c_{q,p},h_{r,s}^{q,p})}q^{L_0}\\
&=q^{h_{r,s}^{q,p}}\frac{\sum_{j\in\Z}(q^{pqj^2+(qs-pr)j}-q^{pqj^2-j(qs+pr)+rs})}{\prod_{n=1}^\infty(1-q^n)}\\
&=q^{h_{r,s}^{q,p}}\frac{\sum_{j\in\Z}(q^{pqj^2+(qs-pr)j}-q^{pqj^2-j(qs+pr)+rs})}{(q;q)_\infty}
\end{align*}
for $r,s\in\Z$ with $1\leq r\leq q-1$ and $1\leq s\leq p-1$.

\subsubsection*{Boundary Case \texorpdfstring{$q=2$}{q=2}}

As for the affine \voa{}s for $\sl_2$ \cite{KW17}, the characters simplify in the boundary case of $q=2$, which is the smallest possible value of $q$. That is, we consider the simple Virasoro \voa{} $L_\mathrm{Vir}(c_{2,p},0)$ of central charge $c_{2,p}=13-12/p-3p$ for $p\geq3$ odd. In that case, the $(p-1)/2$ many irreducible modules are $L_\mathrm{Vir}(c_{2,p},h_{1,s}^{2,p})$ for $1\leq s\leq p-1$, with the symmetry $s\mapsto p-s$. The characters are
\begin{align*}
&\ch^*_{L_\mathrm{Vir}(c_{2,p},h_{1,s}^{2,p})}(q)=q^{c_{2,p}/24}\frac{q^{s^2/2p}\vartheta_{11}(q^s,q^p)}{q^{1/6}\vartheta_{11}(q,q^3)}\\
&=\i q^{c_{2,p}/24}\frac{q^{s^2/2p}\vartheta_{11}(q^s,q^p)}{q^{1/24}(q;q)_\infty}=\i q^{c_{2,p}/24}\frac{q^{s^2/2p}\vartheta_{11}(q^s,q^p)}{\eta(q)}.
\end{align*}
Here, the factor of $q^{c_{2,p}/24}$ appears because we considered the characters without the usual factor of $q^{-c_{2,p}/24}$ to make them modular invariant.


\subsubsection{Logarithmic Case \texorpdfstring{$q=1$}{q=1}}

In the following, we consider the case of central charge $c=c_{1,p}=13-6p-6/p$ for $p\in\Z_{\geq2}$, the logarithmic $(1,p)$-minimal models. As stated above, the $\smash{L_\mathrm{Vir}(c_{1,p},h)}$ for $h\in\C$ are all the irreducible modules for $\smash{L_\mathrm{Vir}(c_{1,p},0)=V_\mathrm{Vir}(c_{1,p},0)}$ up to equivalence. They are described in, e.g., \cite{FFT11}. For generic highest weight $h\in\C$, the Verma module $\smash{M_\mathrm{Vir}(c_{1,p},h)=L_\mathrm{Vir}(c_{1,p},h)}$ is already irreducible. The only irreducible modules that are not Verma modules are the
\begin{equation*}
M_{r,s;p}\coloneqq L_\mathrm{Vir}(c_{1,p},h_{r,s}^{1,p})
\end{equation*}
with
\begin{equation*}
h_{r,s}^{1,p}\coloneqq\frac{(s-pr)^2-(p-1)^2}{4p}=\frac{p}{4}(r^2-1)+\frac{1}{4p}(s^2-1)+\frac{1-rs}{2}
\end{equation*}
for $r\in\Ns$ and $1\leq s\leq p$. They are formed by taking a quotient of the Verma module $\smash{M_\mathrm{Vir}(c_{1,p},h_{r,s}^{1,p})}$ of highest weight $\smash{h_{r,s}^{1,p}}$ by the submodule generated by one singular vector~\raisebox{.5pt}{\textcircled{\raisebox{-.9pt}{1}}} of $L_0$-weight $\smash{h_{r,s}^{1,p}+rs}$. For $r=s=1$ we recover the vacuum module $\smash{M_{1,1;p}=L_\mathrm{Vir}(c_{1,p},0)=V_\mathrm{Vir}(c_{1,p},0)}$.

We summarise the modules of $\smash{L_\mathrm{Vir}(c_{1,p},0)=V_\mathrm{Vir}(c_{1,p},0)}$ in \autoref{fig:logrep}, pointing out the similarity to the modules of near-admissible level $L_{-2+1/p}(\sl_2)$; see \autoref{fig:nearadmrep}. Single arrows correspond to taking the quotient by a single singular vector.
\begin{figure}[ht]
\begin{tikzcd}[column sep=0]
h\neq h_{r,s}^{1,p}{:}&h_{r,s}^{1,p}{:}\\[-10pt]
&M_\mathrm{Vir}(c_{1,p},h_{r,s}^{1,p})\arrow{d}[swap]{\raisebox{.5pt}{\textcircled{\raisebox{-.9pt}{1}}}}\\
M_\mathrm{Vir}(c_{1,p},h)=L_\mathrm{Vir}(c_{1,p},h)&M_{r,s;p}=L_\mathrm{Vir}(c_{1,p},h_{r,s}^{1,p})
\end{tikzcd}
\caption{Representations of logarithmic $L_\mathrm{Vir}(c_{1,p})$.}
\label{fig:logrep}
\end{figure}

The characters of the irreducible Verma modules $M_\mathrm{Vir}(c_{1,p},h)=L_\mathrm{Vir}(c_{1,p},h)$ are simply
\begin{equation*}
\ch^*_{M_\mathrm{Vir}(c_{1,p},h)}(q)=\frac{q^h}{\prod_{n=1}^\infty(1-q^n)}=\frac{q^h}{(q;q)_\infty}
\end{equation*}
for $h\neq h_{r,s}^{1,p}$, and those of the irreducible modules $M_{r,s;p}=L_\mathrm{Vir}(c_{1,p},h_{r,s}^{1,p})$ are
\begin{equation*}
\ch^*_{M_{r,s;p}}(q)=(1-q^{rs})\frac{q^{h_{r,s}^{1,p}}}{\prod_{n=1}^\infty(1-q^n)}=(1-q^{rs})\frac{q^{h_{r,s}^{1,p}}}{(q;q)_\infty}
\end{equation*}
for $r\in\Ns$ and $1\leq s\leq p$.


\section{Character Identities}

In this section, we prove the main results of this text, certain character identities between (irreducible) modules for the simple affine \voa{} $L_k(\sl_2)$ and for the simple Virasoro \voa{} $L_\mathrm{Vir}(c,0)$. The character identities can also be understood as grading-respecting vector-space isomorphisms.

First, we establish a relation on the level of Verma modules, which is rather straightforward. However, specialising to the level $k=k_{q,p}=-2+q/p$ for the affine and to the central charge $c=c_{q,3p}=13-18p/q-2q/p$ for the Virasoro \voa{} (also allowing $q=1$), we observe that the correspondence respects the weights of the singular vectors. This implies that the character identities descend to irreducible quotients, which is a more subtle result.

Note that we do not relate the level $k=-2+q/p$ to the central charge $c_{q,p}$, like under quantum Hamiltonian reduction \cite{FF90c,FF92}, but rather to $c_{q,3p}$. Concretely, we relate irreducible modules of the admissible-level $L_{-2+q/p}(\sl_2)$ to those of rational Virasoro \voa{}s $L_\mathrm{Vir}(c_{q,3p})$, and near-admissible levels with ``$q=1$'' to logarithmic $(1,3p)$-minimal models. We study some aspects of these two cases in more detail in \autoref{sec:integral} and \autoref{sec:gvsi}.


\subsection{Verma Modules and Singular Vectors}\label{Verma-modules}

First, let the level $k\in\C\setminus\{-2\}$ and $c\in\C$ be arbitrary. We consider Verma modules $M_k(\mu)$, $\mu\in\C$, for the universal affine \voa{} $V^k(\sl_2)$ and Verma modules $M_\mathrm{Vir}(c,h)$, $h\in\C$, for the universal Virasoro \voa{} $V_\mathrm{Vir}(c,0)$.

By the Poincaré-Birkhoff-Witt theorem, the Verma module $M_k(\mu)$ has a basis consisting of vectors of the form
\begin{equation*}
\prod_{i=1}^\infty(e_{-i})^{k_i}(h_{-i})^{l_i}(f_{-i})^{m_i}(f_0)^nv
\end{equation*}
where $k=(k_i)_{i=1}^\infty$, $l=(l_i)_{i=1}^\infty$ and $m=(m_i)_{i=1}^\infty$ are finite sequences with values in $\N$, $n\in\N$ and $v$ is the highest-weight vector. On the other hand, the Verma module $M_\mathrm{Vir}(c,h)$ has a basis consisting of vectors of the form
\begin{equation*}
\prod_{i=1}^\infty(L_{-i})^{m_i}w
\end{equation*}
where $m=(m_i)_{i=1}^\infty$ is a finite sequence with values in $\N$ and $w$ is the highest-weight vector. Hence, we can define a vector-space isomorphism
\begin{equation}\label{eq:phiplus}
\varphi^+\colon M_k(\mu)\overset{\sim}{\longrightarrow}M_\mathrm{Vir}(c,h)
\end{equation}
by the assignment
\begin{align*}
e_{-i}\mapsto L_{-3i+1},\quad h_{-i}\mapsto L_{-3i},\quad f_{-i}\mapsto L_{-3i-1},\quad f_0\mapsto L_{-1},\quad v\mapsto w
\end{align*}
for $i\in\Ns$. By definition, the map $\varphi^+$ maps a vector of $(h_0,L_0)$-weight $(f,n)$ to a vector of $L_0$-weight $n'$ satisfying
\begin{equation*}
n'=-f/2+3n+\Bigl(h-\frac{\mu(3\mu+2-2k)}{4(k+2)}\Bigr).
\end{equation*}

The graded vector-space isomorphism $\varphi^+$ immediately implies the following character identity for Verma modules
{\allowdisplaybreaks
\begin{align*}
&\ch^*_{M_k(\mu)}(w,q)\Big|_{w^fq^n\mapsto q^{n'}}\\
&=\frac{(q^{-1/2})^\mu(q^3)^{\mu(\mu+2)/(4(k+2))}}{((q^{-1/2})^2q^3;q^3)_\infty(q^3;q^3)_\infty((q^{-1/2})^{-2};q^3)_\infty}q^{h-\mu(3\mu+2-2k)/(4(k+2))}\\
&=\frac{q^h}{(q^2;q^3)_\infty(q^3;q^3)_\infty(q^1;q^3)_\infty}=\frac{q^h}{(q;q)_\infty}=\ch^*_{M_\mathrm{Vir}(c,h)}(q)
\end{align*}
}%
under the substitution $w^fq^n\mapsto q^{n'}=q^{-f/2+3n+(h-\mu(3\mu+2-2k)/(4(k+2)))}$ of formal variables. After all, this character identity simply records the way $\varphi^+$ is compatible with the $h_0$- and $L_0$-grading.

\begin{rem}
The vector-space isomorphism $\varphi^+$ in \eqref{eq:phiplus} and its analogues in \autoref{sec:char}, \autoref{sec:universal}, \autoref{sec:relaxedrelaxed} and \autoref{sec:whittaker} are based on the isomorphism as graded vector spaces of the $\Z$-graded Lie algebras
\begin{equation*}
\hat{\sl}_2\cong\mathcal{L},
\end{equation*}
which respects the natural triangular decompositions. Here, the Virasoro Lie algebra $\smash{\mathcal{L}=\bigoplus_{s\in\Z}\C L_s\oplus\C C}$ is equipped with the standard $\Z$-grading (given by $-s$ in the above direct sum) and $\smash{\hat\sl_2}=\sl_2\otimes\C[t,t^{-1}]\oplus\C K$ with the $\Z$-grading defined by $\wt(e_i)=-1-3i$, $\wt(h_i)=-3i$, $\wt(f_i)=1-3i$ and $\wt(K)=0$ for $i\in\Z$. In fact, the $\Z$-grading on $\smash{\hat\sl_2}$ is uniquely determined by the condition that it extends the principal grading on $\sl_2$ (i.e.\ $\wt(e)=-1$, $\wt(h)=0$ and $\wt(f)=1$) and that $\smash{\hat{\sl}_2\cong\mathcal{L}}$ as graded vector spaces, preserving the natural triangular decomposition.

The above vector-space isomorphism (see also \autoref{prop:vermacorr} below) shows that the isomorphism $\smash{\hat{\sl}_2\cong\mathcal{L}}$ extends to functorial isomorphisms between Verma modules for these Lie algebras as graded vector spaces.
\end{rem}

\smallskip

So far, the vector-space isomorphism $\varphi^+$ or the corresponding character identity is perhaps not very enlightening (but see \autoref{rem:block_equivalences} below). However, we now study situations in which $\varphi^+$ respects the weights of singular vectors. Also note that, so far, we have kept the level $k$ and the central charge $c$ and moreover the highest weights $\mu$ and $h$ completely general.

Supposing that $k=k_{q,p}=-2+q/p$, i.e.\ the admissible case (or analogously the near-admissible case with $q=1$), recall from \autoref{sec:reps} that the Verma modules on both sides of $\varphi^+$ admit (at most) two families of singular vectors $\raisebox{.5pt}{\textcircled{\raisebox{-.9pt}{1}}}$ and $\raisebox{.5pt}{\textcircled{\raisebox{-.9pt}{2}}}$ such that these singular vectors generate the maximal ideals in these Verma modules.

Let $\mu=\smash{\mu_{r,s}=r-qs/p}$ so that both singular vectors in $M_k(\mu)$ are present. We recall that they have $(h_0,L_0)$-weights $(r-qs/p,\ell_{r,s}^{q,p})$ plus
\begin{equation*}
\raisebox{.5pt}{\textcircled{\raisebox{-.9pt}{1}}}~(-2(r+1),s(r+1))\quad\text{and}\quad\raisebox{.5pt}{\textcircled{\raisebox{-.9pt}{2}}}~(2(q-r-1),(p-s)(q-r-1)).
\end{equation*}
A direct computation shows that these will be mapped under $\varphi^+$ to vectors in $M_\mathrm{Vir}(c,h)$ of the following, surprisingly simple, $L_0$-weights
\begin{equation*}
\raisebox{.5pt}{\textcircled{\raisebox{-.9pt}{1}}}~h+(r+1)(3s+1)\quad\text{and}\quad\raisebox{.5pt}{\textcircled{\raisebox{-.9pt}{2}}}~h+(q-r-1)(3p-3s+1),
\end{equation*}
respectively. Comparing this with the formulae for the weights of the two families of singular vectors in the Verma modules $M_\mathrm{Vir}(c,h)$, we are prompted to set
\begin{equation*}
c\coloneqq c_{q,3p}=13-2(k_{q,p}+2)-\frac{18}{k_{q,p}+2}\quad\text{and}\quad h\coloneqq h_{r+1,3s+1}^{q,3p},
\end{equation*}
which only makes sense if $3\nmid q$. Assuming this, $\varphi^+$ maps the singular vectors $\raisebox{.5pt}{\textcircled{\raisebox{-.9pt}{1}}}$ and $\raisebox{.5pt}{\textcircled{\raisebox{-.9pt}{2}}}$ in $M_{-2+q/p}(\mu_{r,s})$ exactly to vectors in $M_\mathrm{Vir}(c_{q,3p},\smash{h_{r+1,3s+1}^{q,3p}})$ that have the same $L_0$-weights as the singular vectors $\raisebox{.5pt}{\textcircled{\raisebox{-.9pt}{1}}}$ and $\raisebox{.5pt}{\textcircled{\raisebox{-.9pt}{2}}}$ in $M_\mathrm{Vir}(c_{q,3p},\smash{h_{r+1,3s+1}^{q,3p}})$. But note that, as we discuss below, it is not generally true that $\varphi^+$, as defined in \eqref{eq:phiplus}, actually maps singular vectors to singular vectors.

There is a positive side effect of fixing
\begin{align*}
h&=h_{r+1,3s+1}^{q,3p}=\frac{(q(3s+1)-3p(r+1))^2-(3p-q)^2}{12pq}\\
&=\frac{\mu_{r,s}(3\mu_{r,s}+2-2k_{q,p})}{4(k_{q,p}+2)},
\end{align*}
as described above. Indeed, $h-\mu(3\mu+2-2k)/(4(k+2))$ is the constant offset in the grading relation of $\varphi^+$. Hence, with the above choice of $h$ (depending on the level $k$ and weight $\mu$), $\varphi^+$ maps a vector of $(h_0,L_0)$-weight $(f,n)$ to a vector of $L_0$-weight
\begin{align*}
n'=-f/2+3n.
\end{align*}
The corresponding character identity is then based on the simple substitution $(w,q)\mapsto(q^{-1/2},q^3)$. We summarise the discussion so far:
\begin{prop}\label{prop:vermacorr}
Consider the Verma module $M_k(\mu)$ for the universal affine \voa{} $V^k(\sl_2)$ with level $k\in\C\setminus\{-2\}$ and weight $\mu\in\C$, as well as the Verma module $M_\mathrm{Vir}(c,h)$ for the universal Virasoro \voa{} $V_\mathrm{Vir}(c,0)$ with central charge $c\in\C$ and weight $h\in\C$. We set $k'=(k+2)/3-2$,
\begin{equation*}
c=c(k')=13-2(k+2)-\frac{18}{k+2}\quad\text{and}\quad h=h(k',\mu)=\frac{\mu(3\mu+2-2k)}{4(k+2)}.
\end{equation*}
Then $\varphi^+\colon M_k(\mu)\overset{\sim}{\longrightarrow} M_\mathrm{Vir}(c,h)$ in \eqref{eq:phiplus} defines a vector-space isomorphism that maps a vector of $(h_0,L_0)$-weight $(f,n)$ to a vector of $L_0$-weight $n'=-f/2+3n$. It implies the character identity
\begin{equation*}
\ch^*_{M_k(\mu)}(q^{-1/2},q^3)=\ch^*_{M_\mathrm{Vir}(c,h)}(q).
\end{equation*}
\end{prop}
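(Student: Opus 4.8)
The plan is to verify the three claims of the statement in sequence: that $\varphi^+$ is a vector-space isomorphism, that it obeys the grading relation $n'=-f/2+3n$ for the stated choice of $h$, and that the character identity is then a formal consequence.

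First I would prove that $\varphi^+$ is a vector-space isomorphism directly from the two PBW bases recalled above. The key point is purely combinatorial: the modes $f_0$ and $e_{-i},h_{-i},f_{-i}$ (for $i\in\Ns$) indexing the PBW basis of $M_k(\mu)$ are sent to $L_{-1}$ and $L_{-3i+1},L_{-3i},L_{-3i-1}$, and as $i$ ranges over $\Ns$ the indices $-3i+1,-3i,-3i-1$ exhaust precisely the negative integers congruent to $1,0,2\pmod 3$ that are $\leq-2$, while $f_0\mapsto L_{-1}$ supplies the single remaining index $-1$. Thus the assignment is a bijection from the generating modes onto $\{L_{-j}:j\geq1\}$, hence induces a bijection of PBW basis monomials (index by index on the exponents), and $\varphi^+$ is the linear extension of this basis bijection.

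Next I would carry out the weight bookkeeping. Recording that $e_{-i},h_{-i},f_{-i}$ carry $(h_0,L_0)$-weights $(2,i),(0,i),(-2,i)$ and $f_0$ carries $(-2,0)$, while each $L_{-j}$ carries $L_0$-weight $j$, a basis monomial with exponents $(k_i),(l_i),(m_i),n_0$ has $h_0$-weight $f=\mu+2\sum_i k_i-2\sum_i m_i-2n_0$ and affine $L_0$-weight $n=\mu(\mu+2)/(4(k+2))+\sum_i i(k_i+l_i+m_i)$, and is sent to a Virasoro vector of $L_0$-weight $h+\sum_i(3i-1)k_i+3\sum_i i\,l_i+\sum_i(3i+1)m_i+n_0$. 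Comparing this against $-f/2+3n$, all mode-dependent contributions agree on both sides, and matching the remaining constant terms forces $h=-\mu/2+3\mu(\mu+2)/(4(k+2))$. A one-line simplification rewrites this as $\mu(3\mu+2-2k)/(4(k+2))=h(k,\mu)$, so with this choice the offset vanishes and $n'=-f/2+3n$ exactly. I note that $c$ does not enter this computation at all---the Virasoro Verma character is independent of $c$---and is fixed as $c(k)$ only for compatibility with the singular-vector weights analysed in the later propositions.

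Finally, the character identity is immediate from the grading relation: it says precisely that the bijection of bases carries the bigraded character $\ch^*_{M_k(\mu)}(w,q)$ to $\ch^*_{M_\mathrm{Vir}(c,h)}(q)$ under the monomial substitution $w^fq^n\mapsto q^{n'}=q^{-f/2+3n}$, i.e.\ $(w,q)\mapsto(q^{-1/2},q^3)$. Evaluating the product formula for $\ch^*_{M_k(\mu)}(w,q)$ at $(q^{-1/2},q^3)$ collapses the three factors $(q^2;q^3)_\infty(q^3;q^3)_\infty(q;q^3)_\infty$ into $(q;q)_\infty$, since their index sets partition $\Ns$ by residue mod $3$, yielding $q^h/(q;q)_\infty=\ch^*_{M_\mathrm{Vir}(c,h)}(q)$, as displayed just before the statement. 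I expect no real obstacle here: the argument is entirely a matter of mode-counting and weight bookkeeping, the only points requiring care being the bijection on modes and the algebraic simplification fixing $h$. The genuinely subtle phenomenon---that $\varphi^+$ matches the \emph{weights} of the singular vectors without actually mapping singular vectors to singular vectors---does not yet surface at the level of Verma modules and is deferred to the subsequent results.
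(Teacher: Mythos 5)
Your proposal is correct and takes essentially the same route as the paper: the PBW-basis mode bijection (with $f_0\mapsto L_{-1}$ filling the index $-1$ and $-3i+1,-3i,-3i-1$ exhausting the residues mod $3$ below), the weight bookkeeping that forces $h=-\mu/2+3\mu(\mu+2)/(4(k+2))=\mu(3\mu+2-2k)/(4(k+2))$, and the collapse of $(q^2;q^3)_\infty(q^3;q^3)_\infty(q;q^3)_\infty$ into $(q;q)_\infty$ under $(w,q)\mapsto(q^{-1/2},q^3)$ are exactly the computations the paper carries out in Section~3.1 before stating the proposition. Your observation that $c$ never enters the argument also matches the paper's own remark immediately following the statement.
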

Recall that $c(k)=13-6(k+2)-6/(k+2)$ is the central charge of the Virasoro \voa{} obtained by quantum Drinfeld-Sokolov reduction of the affine \voa{} for $\sl_2$ at level~$k$; and $h(k,\mu)=\mu(\mu+2-2k)/(4(k+2))$ is the highest weight of the Verma module $\smash{M_\mathrm{Vir}(c,h)=H_\mathrm{DS}^0(M_k(\mu))}$ obtained by the reduction of the Verma module $M_k(\mu)$; see also \autoref{rem:block_equivalences} below.

\medskip

We now describe an analogous map $\varphi^-$ that relates lowest-weight Verma modules for $V^k(\sl_2)$ and Verma modules for $V_\mathrm{Vir}(c,0)$. By repeating the construction of $\varphi^+$ but with the roles of $e$ and $f$ reversed we obtain a vector-space isomorphism
\begin{equation}\label{eq:phiminus}
\varphi^-\colon M^-_k(-\mu)\overset{\sim}{\longrightarrow}M_\mathrm{Vir}(c,h)
\end{equation}
for all $\mu\in\C$ and $h\in\C$. By definition, the map $\varphi^-$ maps a homogeneous vector of $(h_0,L_0)$-weight $(f,n)$ to a vector of $L_0$-weight $n'$ satisfying
\begin{equation*}
n'=f/2+3n+\Bigl(h-\frac{\mu(3\mu+2-2k)}{4(k+2)}\Bigr).
\end{equation*}

In the admissible case, i.e.\ for $k=k_{q,p}=-2+q/p$ and $\mu=\mu_{r,s}=r-qs/p$ so that both singular vectors $\raisebox{.5pt}{\textcircled{\raisebox{-.9pt}{1}}}$ and $\raisebox{.5pt}{\textcircled{\raisebox{-.9pt}{2}}}$ in $\smash{M^-_k(-\mu)}$ are present, we recall that they have $(h_0,L_0)$-weights $\smash{(-r+qs/p,\ell_{r,s}^{q,p})}$ plus
\begin{equation*}
\raisebox{.5pt}{\textcircled{\raisebox{-.9pt}{1}}}~(2(r+1),s(r+1))\quad\text{and}\quad\raisebox{.5pt}{\textcircled{\raisebox{-.9pt}{2}}}~(-2(q-r-1),(p-s)(q-r-1)).
\end{equation*}
These will be mapped under $\varphi^-$ to vectors in $M_\mathrm{Vir}(c,h)$ of $L_0$-weights
\begin{equation*}
\raisebox{.5pt}{\textcircled{\raisebox{-.9pt}{1}}}~h+(r+1)(3s+1)\quad\text{and}\quad\raisebox{.5pt}{\textcircled{\raisebox{-.9pt}{2}}}~h+(q-r-1)(3p-3s+1),
\end{equation*}
respectively, just like in the highest-weight Verma modules under $\varphi^+$. Again, we compare this with the formulae for the weights of the two families of singular vectors in the Verma modules $M_\mathrm{Vir}(c,h)$ and set $c\coloneqq c_{q,3p}$ and $\smash{h\coloneqq h_{r+1,3s+1}^{q,3p}}$, supposing that $3\nmid q$. Then $\varphi^-$ maps the singular vectors $\raisebox{.5pt}{\textcircled{\raisebox{-.9pt}{1}}}$ and $\raisebox{.5pt}{\textcircled{\raisebox{-.9pt}{2}}}$ in $\smash{M^-_{-2+q/p}(-\mu_{r,s})}$ exactly to vectors in $\smash{M_\mathrm{Vir}(c_{q,3p},h_{r+1,3s+1}^{q,3p})}$ that have the same $L_0$-weights as the singular vectors $\raisebox{.5pt}{\textcircled{\raisebox{-.9pt}{1}}}$ and $\raisebox{.5pt}{\textcircled{\raisebox{-.9pt}{2}}}$ in $\smash{M_\mathrm{Vir}(c_{q,3p},h_{r+1,3s+1}^{q,3p})}$.

Again, fixing $\smash{h=h_{r+1,3s+1}^{q,3p}}$ also implies that $\varphi^-$ maps a vector of $(h_0,L_0)$-weight $(f,n)$ simply to a vector of $L_0$-weight
\begin{align*}
n'=f/2+3n.
\end{align*}
Overall, we obtain:
\begin{prop}\label{prop:vermacorr2}
Consider the lowest-weight Verma module $M^-_k(-\mu)$ for the universal affine \voa{} $V^k(\sl_2)$ with level $k\in\C\setminus\{-2\}$ and weight $\mu\in\C$, as well as the Verma module $M_\mathrm{Vir}(c,h)$ for the universal Virasoro \voa{} $V_\mathrm{Vir}(c,0)$ with central charge $c\in\C$ and weight $h\in\C$. We set $k'=(k+2)/3-2$, $c=c(k')$ and $h=h(k',\mu)$, exactly as in \autoref{prop:vermacorr}.

Then $\varphi^-\colon M_k(-\mu)\overset{\sim}{\longrightarrow} M_\mathrm{Vir}(c,h)$ in \eqref{eq:phiminus} defines a vector-space isomorphism that maps a vector of $(h_0,L_0)$-weight $(f,n)$ to a vector of $L_0$-weight $n'=f/2+3n$. It implies the character identity
\begin{equation*}
\ch^*_{M^-_k(-\mu)}(q^{1/2},q^3)=\ch^*_{M_\mathrm{Vir}(c,h)}(q).
\end{equation*}
\end{prop}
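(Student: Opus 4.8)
**The plan is to mirror the proof of \autoref{prop:vermacorr} almost verbatim, exploiting the symmetry between the highest- and lowest-weight settings.** The key observation from the excerpt is that $M^-_k(-\mu)$ is obtained from $M_k(\mu)$ by composing with the inner automorphism lifting $e\mapsto f$, $f\mapsto e$, $h\mapsto -h$ of $\sl_2$; on characters this is the substitution $w\mapsto w^{-1}$. So the whole statement should follow by tracking how this sign flip propagates through the construction of $\varphi^-$.

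\medskip

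\noindent\textbf{Step 1 (Basis and the map $\varphi^-$).} First I would invoke the Poincaré–Birkhoff–Witt theorem to write down a PBW basis for $M^-_k(-\mu)$, now built from the raising-lowering operators with $e$ and $f$ interchanged, i.e.\ monomials $\prod_{i=1}^\infty (f_{-i})^{k_i}(h_{-i})^{l_i}(e_{-i})^{m_i}(e_0)^n v^-$ acting on the lowest-weight vector $v^-$. As in \eqref{eq:phiminus}, the map $\varphi^-\colon M^-_k(-\mu)\overset{\sim}{\longrightarrow}M_\mathrm{Vir}(c,h)$ sends $f_{-i}\mapsto L_{-3i+1}$, $h_{-i}\mapsto L_{-3i}$, $e_{-i}\mapsto L_{-3i-1}$, $e_0\mapsto L_{-1}$, and $v^-\mapsto w$. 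Since both sides are PBW bases indexed by the same combinatorial data (three families of nonnegative sequences plus one extra nonnegative integer, matched against the single family of $L_{-j}$'s with $j\geq 1$), $\varphi^-$ is a vector-space isomorphism.

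\medskip

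\noindent\textbf{Step 2 (Grading relation).} Next I would compute how $\varphi^-$ shifts weights. The lowest-weight vector of $M^-_k(-\mu)$ has $(h_0,L_0)$-weight $(-\mu,\mu(\mu+2)/(4(k+2)))$, as recorded in \autoref{sec:affine}. A monomial with a factor $f_{-i}$ (now the ``$+1$'' shifter, since $f$ lowers $h_0$-weight) raises $h_0$-weight, so relative to $\varphi^+$ the sign of the $h_0$-contribution is reversed: a vector of $(h_0,L_0)$-weight $(f,n)$ maps to $L_0$-weight $n'=f/2+3n+\bigl(h-\mu(3\mu+2-2k)/(4(k+2))\bigr)$, exactly the formula in \eqref{eq:phiminus}. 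With the choice $h=h(k,\mu)=\mu(3\mu+2-2k)/(4(k+2))$ inherited from \autoref{prop:vermacorr}, the offset vanishes and we get $n'=f/2+3n$.

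\medskip

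\noindent\textbf{Step 3 (Character identity).} Finally, the grading relation is equivalent to the substitution $w^f q^n\mapsto q^{f/2+3n}$, i.e.\ $(w,q)\mapsto(q^{1/2},q^3)$. Substituting into the explicit character
\begin{equation*}
\ch^*_{M^-_k(-\mu)}(w,q)=\frac{w^{-\mu}q^{\mu(\mu+2)/(4(k+2))}}{(w^2;q)_\infty(q;q)_\infty(w^{-2}q;q)_\infty}
\end{equation*}
from \autoref{sec:affine} and running the same telescoping computation as in the $\varphi^+$ case — the three infinite products $(q^1;q^3)_\infty$, $(q^3;q^3)_\infty$, $(q^2;q^3)_\infty$ recombine into $(q;q)_\infty$ — yields $q^h/(q;q)_\infty=\ch^*_{M_\mathrm{Vir}(c,h)}(q)$. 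The main point requiring care is the sign bookkeeping in Step~2: one must verify that interchanging $e$ and $f$ flips precisely the $h_0$-contribution while leaving the $L_0$-grading and the PBW bijection untouched, so that the net effect is $w\mapsto w^{-1}$ and hence $n'=f/2+3n$ rather than $-f/2+3n$. Everything else is the verbatim mirror of \autoref{prop:vermacorr}, so no genuinely new obstacle arises.
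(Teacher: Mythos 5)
Your proof is correct and takes essentially the same route as the paper: there, too, $\varphi^-$ is obtained by repeating the PBW construction of $\varphi^+$ with the roles of $e$ and $f$ reversed, the grading relation $n'=f/2+3n+\bigl(h-\mu(3\mu+2-2k)/(4(k+2))\bigr)$ is read off from the assignment, and the character identity follows from the substitution $(w,q)\mapsto(q^{1/2},q^3)$ exactly as in the $\varphi^+$ case. One phrasing slip in your Step~2 — $f_{-i}$ of course \emph{lowers} the $h_0$-weight by $2$ rather than raising it — does not affect anything, since your final grading formula and offset computation are the correct ones.
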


\smallskip

Finally, we note that the character identities in this section are compatible with known results for the affine category $\mathcal{O}$ \cite{Fie06,Ara14}. We thank the referee for pointing us to these references.
\begin{rem}\label{rem:block_equivalences}
Equivalences between suitable blocks of the category $\mathcal{O}$ for symmetrisable Kac-Moody algebras \cite{Fie06}, here applied to $\smash{\hat{\sl}_2}$, combined with quantum Drinfeld-Sokolov reduction (and two-sided BGG resolutions \cite{Ara14}) relate highest-weight Verma modules (and their simple quotients).

Quantum Drinfeld-Sokolov reduction provides a functor
\begin{equation*}
\smash{H_\mathrm{DS}^0\colon\mathcal{O}^k(\hat{\sl}_2)\to\mathcal{O}^c(\mathcal{L})},\quad M_k(\mu)\mapsto H_\mathrm{DS}^0(M_k(\mu))=M_\mathrm{Vir}(c,h)
\end{equation*}
between the categories $\mathcal{O}$ for the affine Lie algebra $\hat{\sl}_2$ at level $k$ and for the Virasoro Lie algebra $\mathcal{L}$ at central charge $c=c(k)=13-6(k+2)-6/(k+2)$. The functor maps the Verma module $M_k(\mu)$ of highest weight $\mu$ to the Verma module $M_\mathrm{Vir}(c,h)$ of highest weight $h=h(k,\mu)=\mu(\mu+2-2k)/(4(k+2))$. This reduction does not preserve the characters.

On the other hand, in this paper, we consider for levels $k$ and $k'$ satisfying $k'+2=(k+2)/3$, at the level of characters, the composition
\begin{equation*}
\mathcal{O}^k(\hat{\sl}_2)\dashrightarrow\mathcal{O}^{k'}(\hat{\sl}_2)\overset{H_\mathrm{DS}^0}{\longrightarrow}\mathcal{O}^{c(k')}(\mathcal{L}),
\end{equation*}
which maps
\begin{equation*}
M_k(\mu)\mapsto M_{k'}(\mu)\mapsto H_\mathrm{DS}^0(M_{k'}(\mu))=M_\mathrm{Vir}(c,h),
\end{equation*}
where $c=c(k')=13-6(k'+2)-6/(k'+2)=13-2(k+2)-18/(k+2)$ and $h=h(k',\mu)=\mu(\mu+2-2k')/(4(k'+2))=\mu(3\mu+2-2k)/(4(k+2))$, just as in \autoref{prop:vermacorr}.

The dashed arrow is not an honest functor. However, upon restriction to suitable blocks $[\mu]$ and $[\mu']$, there are equivalences of categories $\smash{\mathcal{O}_{[\mu]}^k(\hat{\sl}_2)\to\mathcal{O}_{[\mu']}^{k'}(\hat{\sl}_2)}$ mapping Verma modules to Verma modules \cite{Fie06}. Overall, this can be seen as a more categorical interpretation of our character identities between Verma modules in \autoref{prop:vermacorr}.

\smallskip

One obtains similar character identities for non-Verma modules in such blocks, in particular for irreducible highest-weight modules. For instance, for admissible levels $k=-2+q/p$ and $k'=-2+q/3p$, when the block $\smash{\mathcal{O}_{[\mu]}^{-2+q/p}(\hat{\sl}_2)}$ contains the irreducible highest-weight modules $L_{-2+q/p}(\mu_{r,s})$, there is an equivalence
\begin{equation*}
\mathcal{O}_{[\mu]}^{-2+q/p}(\hat{\sl}_2)\overset{\cong}{\longrightarrow}\mathcal{O}_{[\mu]}^{-2+q/3p}(\hat{\sl}_2),
\end{equation*}
and the BGG-type resolutions of the irreducible highest-weight modules in \cite{Ara14}, combined with quantum Drinfeld-Sokolov reduction, then provide another way to prove the character identities in \autoref{cor:charid} below. Indeed, the identities for the irreducible characters would follow from the corresponding identities for Verma module characters, applied term by term to the alternating sums.

Similar statements would also hold for the irreducible modules at the near-admissible levels; see \autoref{cor:charid2} and \autoref{prop:gvsi} below.

We believe that this alternative approach may be used to prove higher-rank generalisations of the results in this paper.
\end{rem}


\subsection{Irreducible Modules: Admissible Case}\label{sec:admissible}

We now prove character identities between irreducible modules of the simple affine \voa{}s $L_{-2+q/p}(\sl_2)$ at admissible levels $k=k_{q,p}\coloneqq-2+q/p$ with $q,p\in\Z$, $q\geq2$, $p\geq1$ and $(p,q)=1$ and irreducible modules for the rational Virasoro minimal models $L_\mathrm{Vir}(c_{q,3p},0)$, assuming that $3\nmid q$. The correspondence becomes particularly nice for integral levels, which we discuss in detail in \autoref{sec:integral}.

On the level of Verma modules, we have seen:
\begin{prop}\label{prop:vermasingadm}
Let $q,p\in\Z$ with $q\geq2$, $p\geq1$ and $(p,q)=1$. Moreover, assume that $3\nmid q$. Let $r,s\in\Z$ with $0\leq r\leq q-2$ and $0\leq s\leq p-1$. Then there is a vector-space isomorphism \eqref{eq:phiplus}
\begin{equation*}
\varphi^+\colon M_{-2+q/p}(\mu_{r,s})\overset{\sim}{\longrightarrow}M_\mathrm{Vir}(c_{q,3p},h_{r+1,3s+1}^{q,3p})
\end{equation*}
between Verma modules that maps a vector of $(h_0,L_0)$-weight $(f,n)$ to a vector of $L_0$-weight $n'=-f/2+3n$. In particular, the singular vectors in $\smash{M_{-2+q/p}(\mu_{r,s})}$ of $(h_0,L_0)$-weights $\smash{(r-qs/p,\ell_{r,s}^{q,p})}$ plus
\begin{equation*}
\raisebox{.5pt}{\textcircled{\raisebox{-.9pt}{1}}}~(-2(r+1),s(r+1))\quad\text{and}\quad\raisebox{.5pt}{\textcircled{\raisebox{-.9pt}{2}}}~(2(q-r-1),(p-s)(q-r-1))
\end{equation*}
are mapped to vectors in $\smash{M_\mathrm{Vir}(c_{q,3p},h_{r+1,3s+1}^{q,3p})}$ of $L_0$-weights
\begin{equation*}
\raisebox{.5pt}{\textcircled{\raisebox{-.9pt}{1}}}~h_{r+1,3s+1}^{q,3p}+(r+1)(3s+1)\quad\text{and}\quad\raisebox{.5pt}{\textcircled{\raisebox{-.9pt}{2}}}~h_{r+1,3s+1}^{q,3p}+(q-r-1)(3p-3s+1),
\end{equation*}
which are exactly the $L_0$-weights of the two singular vectors $\raisebox{.5pt}{\textcircled{\raisebox{-.9pt}{1}}}$ and $\raisebox{.5pt}{\textcircled{\raisebox{-.9pt}{2}}}$, respectively, in $\smash{M_\mathrm{Vir}(c_{q,3p},h_{r+1,3s+1}^{q,3p})}$.
\end{prop}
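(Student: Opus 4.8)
The plan is to obtain this proposition as a specialisation of the general \autoref{prop:vermacorr}. That proposition already supplies the vector-space isomorphism $\varphi^+$ of \eqref{eq:phiplus}, together with the grading relation $n'=-f/2+3n$, for \emph{any} $k\in\C\setminus\{-2\}$ and $\mu\in\C$, once one sets $c=c(k)$ and $h=h(k,\mu)$. Thus the only things left to do are (i) to check that, at the admissible level $k=k_{q,p}=-2+q/p$ and weight $\mu=\mu_{r,s}=r-qs/p$, these prescribed values equal $c_{q,3p}$ and $h_{r+1,3s+1}^{q,3p}$, and (ii) to verify that the grading relation carries the two affine singular vectors to vectors whose $L_0$-weights match those of the two Virasoro singular vectors in the target.

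For (i), I would substitute $k+2=q/p$ into $c(k)=13-2(k+2)-18/(k+2)$, obtaining $13-2q/p-18p/q$, and expand $c_{q,3p}=1-6(3p-q)^2/(3pq)$ to the same expression; this is where $3\nmid q$ enters, since together with $(p,q)=1$ it is exactly the condition $(q,3p)=(q,3)=1$ that makes $c_{q,3p}$ a bona fide minimal-model central charge and $h_{r+1,3s+1}^{q,3p}$ well defined. For the conformal weight I would insert $\mu=\mu_{r,s}$ into $h(k,\mu)=\mu(3\mu+2-2k)/(4(k+2))$ and simplify, recovering the closed form $h_{r+1,3s+1}^{q,3p}=((q(3s+1)-3p(r+1))^2-(3p-q)^2)/(12pq)$; this is precisely the identity already recorded in \autoref{Verma-modules}.

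For (ii), since $\varphi^+$ identifies the two highest-weight vectors — which carry $L_0$-weights $\ell_{r,s}^{q,p}$ and $h_{r+1,3s+1}^{q,3p}=-\mu_{r,s}/2+3\ell_{r,s}^{q,p}$, respectively — a vector lying $\delta_n$ above the affine highest-weight vector with relative $h_0$-weight $\delta_f$ is sent to a vector lying $-\delta_f/2+3\delta_n$ above the Virasoro highest-weight vector. The relative shifts of the two affine singular vectors are integral, so the non-integral weight $\mu_{r,s}$ never enters: the shift $(\delta_f,\delta_n)=(-2(r+1),s(r+1))$ of $\raisebox{.5pt}{\textcircled{\raisebox{-.9pt}{1}}}$ yields the clean level $(r+1)+3s(r+1)=(r+1)(3s+1)$, while the shift $(2(q-r-1),(p-s)(q-r-1))$ of $\raisebox{.5pt}{\textcircled{\raisebox{-.9pt}{2}}}$ yields $3(p-s)(q-r-1)-(q-r-1)=(q-r-1)(3(p-s)-1)$. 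By the general formula $(q-r)(p-s)$ of \autoref{sec:virasoro}, the two singular vectors of $M_\mathrm{Vir}(c_{q,3p},h_{r+1,3s+1}^{q,3p})$ sit precisely at levels $(r+1)(3s+1)$ and $(q-(r+1))(3p-(3s+1))$, which agree with the two values just computed, establishing the weight match.

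There is no deep obstacle: beyond \autoref{prop:vermacorr}, the argument is the bookkeeping of (i) and (ii), and the only genuine care needed is the arithmetic matching of the singular-vector levels. The one conceptual point I would emphasise — and the pitfall to avoid — is that the proposition asserts \emph{only} an equality of $L_0$-weights. As already noted in \autoref{Verma-modules}, the explicit map $\varphi^+$ of \eqref{eq:phiplus} does not in general send singular vectors to singular vectors, so at this stage one may not conclude that the images are singular, nor identify the maximal proper submodules on the two sides. It is precisely this weaker, weight-level compatibility that will nonetheless be enough to make the character identity descend to the irreducible quotients.
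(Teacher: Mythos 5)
Your proposal is correct and takes essentially the same route as the paper: \autoref{prop:vermacorr} supplies the isomorphism and grading relation, the specialisations $c(k_{q,p})=c_{q,3p}$ and $h(k_{q,p},\mu_{r,s})=h_{r+1,3s+1}^{q,3p}$ (where $3\nmid q$ enters) fix the target, and the rest is the bookkeeping of the integral singular-vector shifts, including the correct caveat that only $L_0$-weights, not singular vectors themselves, are matched. One remark: your computed level $(q-r-1)\bigl(3(p-s)-1\bigr)=(q-r-1)(3p-3s-1)$ for the vector $\raisebox{.5pt}{\textcircled{\raisebox{-.9pt}{2}}}$ is the correct value on \emph{both} sides (it agrees with the general Virasoro formula $h+(q-\tilde{r})(3p-\tilde{s})$ at $(\tilde{r},\tilde{s})=(r+1,3s+1)$), so the expression $h_{r+1,3s+1}^{q,3p}+(q-r-1)(3p-3s+1)$ displayed in the proposition (and in \autoref{Verma-modules}) carries a sign typo that is harmless only because it appears identically on both sides of the asserted equality.
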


Because the weights of the singular vectors are preserved (although not necessarily the singular vectors themselves), this suggests that the above character identity for the Verma modules descends to a character identity for the irreducible quotients. Rather than verifying this by studying the graded structure of the maximal ideals, we directly compare the characters of the irreducible quotients (see \autoref{sec:reps}).
\begin{prop}\label{cor:charid}
Let $q,p\in\Z$ with $q\geq2$, $p\geq1$ and $(p,q)=1$. Moreover, assume that $3\nmid q$. Let $r,s\in\Z$ with $0\leq r\leq q-2$ and $0\leq s\leq p-1$. Then
\begin{equation*}
\ch^*_{L_{-2+q/p}(\mu_{r,s})}(q^{-1/2},q^3)=\ch^*_{L_\mathrm{Vir}(c_{q,3p},h_{r+1,3s+1}^{q,3p})}(q).
\end{equation*}
\end{prop}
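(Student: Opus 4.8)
The plan is to prove the identity by direct computation, substituting $(w,q)\mapsto(q^{-1/2},q^3)$ into the explicit character formula for $\ch^*_{L_{-2+q/p}(\mu_{r,s})}(w,q)$ recalled in \autoref{sec:affine} and checking that the result agrees termwise with the formula for $\ch^*_{L_\mathrm{Vir}(c_{q,3p},h_{r+1,3s+1}^{q,3p})}(q)$ recalled in \autoref{sec:virasoro}. First I would record that the hypotheses $0\le r\le q-2$, $0\le s\le p-1$ and $3\nmid q$ guarantee $(q,3p)=1$ together with $1\le r+1\le q-1$ and $1\le 3s+1\le 3p-1$, so that both sides really are irreducible-module characters given by the stated closed forms. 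Conceptually the identity is forced by \autoref{prop:vermasingadm}, since the irreducible character arises from the Verma character by removing the two singular vectors $\raisebox{.5pt}{\textcircled{\raisebox{-.9pt}{1}}}$ and $\raisebox{.5pt}{\textcircled{\raisebox{-.9pt}{2}}}$, whose $L_0$-weights $\varphi^+$ matches; but, as announced, I would avoid analysing the maximal submodule and instead verify the closed forms.

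For the denominator I would use the product form $\theta_{1,4}(w,q)-\theta_{-1,4}(w,q)=wq^{1/8}(w^2q;q)_\infty(q;q)_\infty(w^{-2};q)_\infty$. Under the substitution this becomes $q^{-1/8}(q^2;q^3)_\infty(q^3;q^3)_\infty(q;q^3)_\infty$, and since the residues $3n-2,3n-1,3n$ exhaust the positive integers, the three Pochhammer symbols recombine into $(q;q)_\infty$, yielding $q^{-1/8}(q;q)_\infty$. This already matches the denominator $(q;q)_\infty$ of the Virasoro character up to the factor $q^{-1/8}$, which I fold into the overall power of $q$.

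For the numerator, each $\theta_{b_\pm,2a}(w^{1/p},q)$ with $a=pq$ becomes the lattice sum $\sum_{j\in b_\pm/(2pq)+\Z}q^{3pqj^2-qj}$. Writing $j=b_\pm/(2pq)+k$ with $k\in\Z$ and completing the square, the exponent becomes $3pqk^2+(3b_\pm-q)k+C_\pm$ with $C_\pm=3b_\pm^2/(4pq)-b_\pm/(2p)$. The arithmetic input is that $3b_+-q=3p(r+1)-q(3s+1)$ and $3b_--q=-(3p(r+1)+q(3s+1))$ match exactly the linear exponents of the two theta-like summands in the Virasoro numerator (after a harmless reflection $k\mapsto-k$ in the $b_+$ term). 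Thus the substituted affine numerator equals $q^{C_+}\sum_k q^{3pqk^2-(3p(r+1)-q(3s+1))k}-q^{C_-}\sum_k q^{3pqk^2-(3p(r+1)+q(3s+1))k}$, i.e.\ precisely $q^{C_+}$ and $-q^{C_-}$ times the two Virasoro sums.

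It then remains to reconcile the overall $q$-powers. Collecting the prefactor $(q^3)^{1/8-p/4q}$, the $q^{1/8}$ from the denominator and the $q^{C_\pm}$, I would verify the two scalar identities $\tfrac12-\tfrac{3p}{4q}+C_+=h_{r+1,3s+1}^{q,3p}$ and $C_--C_+=(r+1)(3s+1)$. The second follows from $b_--b_+=-2p(r+1)$ and $b_-+b_+=-2qs$; the first is elementary and, after clearing $12pq$, amounts to $B_+^2-(3p-q)^2=9b_+^2-6qb_++6pq-9p^2$ with $B_+=3b_+-q$. Together these show that the positive term reproduces $q^{h_{r+1,3s+1}^{q,3p}}$ times the first Virasoro sum and the negative term reproduces $q^{h_{r+1,3s+1}^{q,3p}+(r+1)(3s+1)}$ times the second, completing the proof. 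The main obstacle is purely bookkeeping: reindexing the theta functions correctly and confirming the single prefactor identity, which is exactly where the choices $c=c_{q,3p}$ and $h=h_{r+1,3s+1}^{q,3p}$ are used.
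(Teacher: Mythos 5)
Your proposal is correct and follows essentially the same route as the paper's proof: a direct verification that substitutes $(w,q)\mapsto(q^{-1/2},q^3)$ into the Kac--Wakimoto character formula, recombines the denominator via $(q;q^3)_\infty(q^2;q^3)_\infty(q^3;q^3)_\infty=(q;q)_\infty$, shifts the theta sums $\theta_{b_\pm,2a}$ to sums over $\Z$ by completing the square, and matches the linear exponents and prefactors against the minimal-model character with $(\tilde r,\tilde s)=(r+1,3s+1)$. The two scalar identities you isolate, $C_--C_+=(r+1)(3s+1)$ and $\tfrac12-\tfrac{3p}{4q}+C_+=h_{r+1,3s+1}^{q,3p}$, are exactly the bookkeeping the paper carries out inline, and both check out.
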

We note that since $0\leq r\leq q-2$ and $0\leq s\leq p-1$, it follows that $1\leq r+1\leq q-1$ and $1\leq3s+1\leq3p-2\leq 3p-1$; so the new indices $r+1$ and $3s+1$ are in the correct ranges.
\begin{proof}
We consider the replacement $(w,q)\mapsto(q^{-1/2},q^3)$ and obtain
{\allowdisplaybreaks
\begin{align*}
&\ch^*_{L_{-2+q/p}(\mu_{r,s})}(q^{-1/2},q^3)=q^{3/8-3p/4q}\frac{\theta_{b_+,2a}(q^{-1/2p},q^3)-\theta_{b_-,2a}(q^{-1/2p},q^3)}{q^{-1/2}q^{3/8}(q^3;q^3)_\infty(q^2;q)_\infty(q;q)_\infty}\\
&=q^{1/2-3p/4q}\frac{\sum_{j\in b_+/2a+\Z}q^{3pqj^2-qj}-\sum_{j\in b_-/2a+\Z}q^{3pqj^2-qj}}{(q;q)_\infty}\\
&=q^{\frac{6p^2r-2pqr+3p^2r^2-6pqs+2q^2s-6pqrs+3q^2s^2}{4pq}}\\&\quad\frac{\sum_{j\in\Z}q^{(3p-q+3pr-3qs)j+3pqj^2}-\sum_{j\in\Z}q^{(1+r)(1+3s)+(-3p-q-3pr-3qs)j+3pqj^2}}{(q;q)_\infty}\\
&=q^{\frac{(q\tilde{s}-3p\tilde{r})^2-(3p-q)^2}{12pq}}\frac{\sum_{j\in\Z}q^{(3p\tilde{r}-q\tilde{s})j+3pqj^2}-\sum_{j\in\Z}q^{\tilde{r}\tilde{s}-(3p\tilde{r}+q\tilde{s})j+3 pqj^2}}{(q;q)_\infty}\\
&=q^{h_{\tilde{r},\tilde{s}}^{q,3p}}\frac{\sum_{j\in\Z}q^{(3p\tilde{r}-q\tilde{s})j+3pqj^2}-\sum_{j\in\Z}q^{\tilde{r}\tilde{s}-(3p\tilde{r}+q\tilde{s})j+3 pqj^2}}{(q;q)_\infty}\\
&=\ch^*_{L_\mathrm{Vir}(c_{q,3p},h_{\tilde{r},\tilde{s}}^{q,3p})}(q)
\end{align*}
}%
with $\tilde{r}=r+1$ and $\tilde{s}=3s+1$. This is the asserted character identity.
\end{proof}

In summary, we obtain a correspondence from irreducible $L_{-2+q/p}(\sl_2)$-modules to irreducible $L_\mathrm{Vir}(c_{q,3p},0)$-modules that, on the level of indices, is given by
\begin{equation*}
(q,p)\mapsto (q,3p),\quad (r,s)\mapsto(r+1,3s+1).
\end{equation*}
Note that, except for $p=1$ (see also \autoref{sec:integral}), the correspondence misses some (roughly one third) of the $L_\mathrm{Vir}(c_{q,3p},0)$-modules on the right-hand side.

For $r=s=0$, we obtain the character identity
\begin{equation*}
\ch^*_{L_{-2+q/p}(\sl_2)}(q^{-1/2},q^3)=\ch^*_{L_\mathrm{Vir}(c_{q,3p},0)}(q)
\end{equation*}
for the simple \voa{}s themselves.

\medskip

The analogous result to \autoref{prop:vermasingadm} for lowest-weight Verma modules is:
\begin{prop}\label{prop:lowestvermasingadm}
Let $q,p\in\Z$ with $q\geq2$, $p\geq1$ and $(p,q)=1$. Moreover, assume that $3\nmid q$. Let $r,s\in\Z$ with $0\leq r\leq q-2$ and $0\leq s\leq p-1$. Then there is a vector-space isomorphism \eqref{eq:phiminus}
\begin{equation*}
\varphi^-\colon M^-_{-2+q/p}(-\mu_{r,s})\overset{\sim}{\longrightarrow}M_\mathrm{Vir}(c_{q,3p},h_{r+1,3s+1}^{q,3p})
\end{equation*}
between Verma modules that maps a vector of $(h_0,L_0)$-weight $(f,n)$ to a vector of $L_0$-weight $n'=f/2+3n$. In particular, the singular vectors in $\smash{M^-_{-2+q/p}(-\mu_{r,s})}$ of $(h_0,L_0)$-weights $\smash{(-r+qs/p,\ell_{r,s}^{q,p})}$ plus
\begin{equation*}
\raisebox{.5pt}{\textcircled{\raisebox{-.9pt}{1}}}~(2(r+1),s(r+1))\quad\text{and}\quad\raisebox{.5pt}{\textcircled{\raisebox{-.9pt}{2}}}~(-2(q-r-1),(p-s)(q-r-1))
\end{equation*}
are mapped to vectors in $\smash{M_\mathrm{Vir}(c_{q,3p},h_{r+1,3s+1}^{q,3p})}$ of $L_0$-weights
\begin{equation*}
\raisebox{.5pt}{\textcircled{\raisebox{-.9pt}{1}}}~h_{r+1,3s+1}^{q,3p}+(r+1)(3s+1)\quad\text{and}\quad\raisebox{.5pt}{\textcircled{\raisebox{-.9pt}{2}}}~h_{r+1,3s+1}^{q,3p}+(q-r-1)(3p-3s+1),
\end{equation*}
which are exactly the $L_0$-weights of the two singular vectors $\raisebox{.5pt}{\textcircled{\raisebox{-.9pt}{1}}}$ and $\raisebox{.5pt}{\textcircled{\raisebox{-.9pt}{2}}}$, respectively, in $\smash{M_\mathrm{Vir}(c_{q,3p},h_{r+1,3s+1}^{q,3p})}$.
\end{prop}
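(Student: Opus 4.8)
The proof runs parallel to that of \autoref{prop:vermasingadm}, now using the map $\varphi^-$ of \eqref{eq:phiminus} in place of $\varphi^+$ and \autoref{prop:vermacorr2} in place of \autoref{prop:vermacorr}. The plan is to specialise \autoref{prop:vermacorr2} to the level $k=k_{q,p}=-2+q/p$ and the weight $\mu=\mu_{r,s}=r-qs/p$, to confirm that the induced data $c(k)$ and $h(k,\mu)$ are the asserted minimal-model parameters, and then to push the $(h_0,L_0)$-weights of the two singular vectors through $\varphi^-$.

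First I would record the parameter identifications. Substituting $k+2=q/p$ into $c(k)=13-2(k+2)-18/(k+2)$ gives $c(k)=13-2q/p-18p/q=c_{q,3p}$, while the identity $h(k_{q,p},\mu_{r,s})=\mu_{r,s}(3\mu_{r,s}+2-2k_{q,p})/(4(k_{q,p}+2))=h_{r+1,3s+1}^{q,3p}$ is precisely the algebraic computation already carried out for \autoref{prop:vermasingadm}; since it depends only on the shared label $\mu_{r,s}$, no new work is needed. With these choices \autoref{prop:vermacorr2} directly yields the vector-space isomorphism $\varphi^-$ together with the grading relation $n'=f/2+3n$, and the substitution $(w,q)\mapsto(q^{1/2},q^3)$ then produces the companion character identity.

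The key step is matching the singular-vector weights, and the single feature distinguishing this case from \autoref{prop:vermasingadm} is a sign cancellation, which I regard as the main (though mild) point to verify. The singular vectors \raisebox{.5pt}{\textcircled{\raisebox{-.9pt}{1}}} and \raisebox{.5pt}{\textcircled{\raisebox{-.9pt}{2}}} of $M^-_{-2+q/p}(-\mu_{r,s})$ carry the $(h_0,L_0)$-offsets $(2(r+1),s(r+1))$ and $(-2(q-r-1),(p-s)(q-r-1))$, which are exactly the $h_0$-sign-flips of the corresponding offsets in the highest-weight case. Because $\varphi^-$ uses $n'=+f/2+3n$ rather than the $n'=-f/2+3n$ of $\varphi^+$, these two sign reversals cancel, so applying $n'=f/2+3n$ to the lowest-weight offsets yields precisely the $L_0$-weight offsets $(r+1)(3s+1)$ and $(q-(r+1))(3p-(3s+1))$ of the singular vectors \raisebox{.5pt}{\textcircled{\raisebox{-.9pt}{1}}} and \raisebox{.5pt}{\textcircled{\raisebox{-.9pt}{2}}} of the target $M_\mathrm{Vir}(c_{q,3p},h_{r+1,3s+1}^{q,3p})$ dictated by the minimal-model singular-vector formula, just as in \autoref{prop:vermasingadm}.

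Conceptually, this cancellation is forced rather than accidental: $M^-_{-2+q/p}(-\mu_{r,s})$ is obtained from $M_{-2+q/p}(\mu_{r,s})$ by composing the action with the inner automorphism $e\mapsto f$, $f\mapsto e$, $h\mapsto-h$, which negates $h_0$-weights, preserves $L_0$-weights, and carries singular vectors to singular vectors, so the entire weight bookkeeping reduces to \autoref{prop:vermasingadm}. Finally, exactly as in the highest-weight case, I would emphasise the caveat that $\varphi^-$ need not send singular vectors to singular vectors, only to vectors of the matching $L_0$-weight, which is all that the statement asserts.
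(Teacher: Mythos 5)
Your proof is correct and follows essentially the same route as the paper, which likewise obtains \autoref{prop:lowestvermasingadm} by specialising the map $\varphi^-$ of \eqref{eq:phiminus} (i.e.\ \autoref{prop:vermacorr2}) to $k=k_{q,p}$, $\mu=\mu_{r,s}$, $c=c_{q,3p}$, $h=h_{r+1,3s+1}^{q,3p}$ and pushing the singular-vector weights through the grading relation $n'=f/2+3n$, with the sign cancellation between the flipped $h_0$-offsets and the flipped sign in $n'$ (equivalently, the inner automorphism $e\mapsto f$, $f\mapsto e$, $h\mapsto-h$, which the paper also invokes) reducing everything to \autoref{prop:vermasingadm}. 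One remark: your offset $(q-(r+1))(3p-(3s+1))=(q-r-1)(3p-3s-1)$ for the image of \raisebox{.5pt}{\textcircled{\raisebox{-.9pt}{2}}} is the arithmetically correct value (e.g.\ for $(q,p,r,s)=(2,1,0,0)$ one gets $2$, matching the image $n'=-f/2+3n$ of the affine weight $(2,1)$), so you have in fact silently corrected a sign typo in the printed statement (and in \autoref{prop:vermasingadm}), where it reads $(q-r-1)(3p-3s+1)$.
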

Analogously to \autoref{cor:charid}, we then obtain the following result, which we again verify directly on the level of the characters:
\begin{prop}\label{cor:lowestcharid}
Let $q,p\in\Z$ with $q\geq2$, $p\geq1$ and $(p,q)=1$. Moreover, assume that $3\nmid q$. Let $r,s\in\Z$ with $0\leq r\leq q-2$ and $0\leq s\leq p-1$. Then
\begin{equation*}
\ch^*_{L^-_{-2+q/p}(-\mu_{r,s})}(q^{1/2},q^3)=\ch^*_{L_\mathrm{Vir}(c_{q,3p},h_{r+1,3s+1}^{q,3p})}(q).
\end{equation*}
\end{prop}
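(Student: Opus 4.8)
The plan is to deduce this identity from the already-established highest-weight version in \autoref{cor:charid}, using the symmetry between highest- and lowest-weight modules recorded in \autoref{sec:affine}. Recall that $M^-_k(-\mu)$ is obtained from $M_k(\mu)$ by twisting the $V^k(\sl_2)$-action by the inner automorphism lifting $e\mapsto f$, $f\mapsto e$, $h\mapsto-h$ of $\sl_2$, and that on the level of Jacobi characters this twist amounts to the substitution $w\mapsto w^{-1}$ (with $q$ unchanged). Since this automorphism is a \voa{} automorphism, it carries the maximal proper submodule of $M_k(\mu)$ to that of $M^-_k(-\mu)$ and hence descends to the irreducible quotients, so that
\begin{equation*}
\ch^*_{L^-_{-2+q/p}(-\mu_{r,s})}(w,q)=\ch^*_{L_{-2+q/p}(\mu_{r,s})}(w^{-1},q).
\end{equation*}

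First I would specialise this relation at $w=q^{1/2}$, which yields
\begin{equation*}
\ch^*_{L^-_{-2+q/p}(-\mu_{r,s})}(q^{1/2},q^3)=\ch^*_{L_{-2+q/p}(\mu_{r,s})}(q^{-1/2},q^3).
\end{equation*}
The right-hand side is precisely the left-hand side of \autoref{cor:charid}, and invoking that proposition immediately identifies it with $\ch^*_{L_\mathrm{Vir}(c_{q,3p},h_{r+1,3s+1}^{q,3p})}(q)$, completing the argument. Alternatively, as the surrounding text suggests, one can verify the claim directly from the explicit lowest-weight character formula in \autoref{sec:affine},
\begin{equation*}
\ch^*_{L^-_{-2+q/p}(-\mu_{r,s})}(w,q)=q^{1/8-p/4q}\frac{\theta_{-b_+,2a}(w^{1/p},q)-\theta_{-b_-,2a}(w^{1/p},q)}{\theta_{-1,4}(w,q)-\theta_{1,4}(w,q)},
\end{equation*}
with $a=pq$ and $b_\pm=\pm p(r+1)-qs$, by substituting $(w,q)\mapsto(q^{1/2},q^3)$ and repeating verbatim the theta-series manipulation carried out in the proof of \autoref{cor:charid}.

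There is no serious obstacle here; the result is essentially a corollary. The only point needing care is the sign bookkeeping in the theta functions: one must confirm, via the reindexing $j\mapsto-j$ in $\theta_{n,m}(w,q)=\sum_{j\in n/m+\Z}q^{mj^2/2}w^{mj}$, that $\theta_{n,m}(w^{-1},q)=\theta_{-n,m}(w,q)$. This is exactly what turns the highest-weight character into the lowest-weight one under $w\mapsto w^{-1}$, flipping $b_\pm\mapsto-b_\pm$ in the numerator and swapping the two theta terms in the denominator, and it also confirms that specialising the highest-weight formula at $w=q^{-1/2}$ agrees with specialising the lowest-weight formula at $w=q^{1/2}$. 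Once this symmetry is in hand, the first route requires only a single substitution and an appeal to \autoref{cor:charid}, while the second merely duplicates the routine $q$-series algebra already performed in the highest-weight case.
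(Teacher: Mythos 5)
Your proposal is correct, and your primary route differs from the paper's. The paper gives no written-out proof of this proposition: it states that the identity is ``again verified directly on the level of the characters'', i.e.\ one substitutes $(w,q)\mapsto(q^{1/2},q^3)$ into the explicit lowest-weight character $q^{1/8-p/4q}\bigl(\theta_{-b_+,2a}(w^{1/p},q)-\theta_{-b_-,2a}(w^{1/p},q)\bigr)/\bigl(\theta_{-1,4}(w,q)-\theta_{1,4}(w,q)\bigr)$ and repeats the theta-series manipulation from the proof of \autoref{cor:charid} with $b_\pm$ replaced by $-b_\pm$ --- exactly your second, ``alternative'' route. Your first route instead reduces the statement to \autoref{cor:charid} via the twist by the inner automorphism $e\mapsto f$, $f\mapsto e$, $h\mapsto-h$, and it is sound: the paper records in \autoref{sec:affine} that this twist turns $M_k(\mu)$ into $M^-_k(-\mu)$ with characters related by $w\mapsto w^{-1}$; since twisting by an automorphism leaves the submodule lattice of the underlying space unchanged (and the Sugawara conformal vector, hence the $L_0$-grading, is fixed by an inner automorphism), the relation $\ch^*_{L^-_{-2+q/p}(-\mu_{r,s})}(w,q)=\ch^*_{L_{-2+q/p}(\mu_{r,s})}(w^{-1},q)$ indeed descends to the irreducible quotients, and specialising $(w,q)=(q^{1/2},q^3)$ gives the claim from \autoref{cor:charid} in one line. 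Your sign bookkeeping $\theta_{n,m}(w^{-1},q)=\theta_{-n,m}(w,q)$, via $j\mapsto-j$, is exactly the consistency check needed, and it is in the same spirit as the paper's own remark following the proposition, where the $w\mapsto w^{-1}$ symmetry is invoked to reconcile the $s=0$ cases of the two propositions. What each approach buys: your reduction avoids duplicating the $q$-series algebra and makes the logical dependence on \autoref{cor:charid} explicit, while the paper's direct verification is self-contained and does not require justifying that the twist is compatible with passing to irreducible quotients. Both are valid within the paper's convention of treating $w$ and $q$ as formal variables.
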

We note that for $s=0$, this is not a contradiction to \autoref{cor:charid}, given that $\smash{L^-_{-2+q/p}(-\mu_{r,0})}=\smash{L_{-2+q/p}(\mu_{r,0})}$. Indeed, take note of the symmetry
\begin{equation*}
\ch^*_{L_{-2+q/p}(\mu_{r,0})}(w,q)=\ch^*_{L_{-2+q/p}(\mu_{r,0})}(w^{-1},q).
\end{equation*}
Specialising further to $r=s=0$, we obtain the character identity
\begin{equation*}
\ch^*_{L_{-2+q/p}(\sl_2)}(q^{1/2},q^3)=\ch^*_{L_\mathrm{Vir}(c_{q,3p},0)}(q)
\end{equation*}
for the simple \voa{}s themselves.

\smallskip

\begin{rem}\label{rem:inducedGVSI}
The character identities in \autoref{cor:charid} and \autoref{cor:lowestcharid} are equivalent to the existence of graded vector-space isomorphisms
\begin{equation}\label{eq:chiplusminus}
\chi^\pm\colon L^\pm_{-2+q/p}(\pm\mu_{r,s})\overset{\sim}{\longrightarrow}L_\mathrm{Vir}(c_{q,3p},h_{r+1,3s+1}^{q,3p}),
\end{equation}
respectively, between irreducible modules. However, besides not being unique, they would be difficult to write down explicitly, in general.

In \autoref{sec:freefermion}, in the case of $(q,p)=(4,1)$, we shall write down choices of the vector-space isomorphisms $\chi^\pm$ explicitly (for two of the three irreducible modules) by embedding the modules into free-fermion \svoa{}s.
\end{rem}


\subsubsection*{Boundary Admissible Case}

We saw in \autoref{sec:reps} that the characters of the irreducible modules for $L_{-2+q/p}(\sl_2)$ and $L_\mathrm{Vir}(c_{q,p},0)$ take a particularly nice form in terms of the Jacobi theta function $\vartheta_{11}(w,q)$ in the boundary (admissible) case, i.e.\ for $q=2$. This can be used to obtain a simple proof of the character identities.

Indeed, let $p\in\Ns$ be odd and $s\in\Z$ with $0\leq s\leq p-1$. Then, recalling that
\begin{align*}
\ch^*_{L^\pm_{-2+2/p}(\pm\mu_{0,s})}(w,q)&=q^{(1-p)/8}w^{\mp2s/p}q^{s^2/2p}\frac{\vartheta_{11}(w^{\mp2}q^s,q^p)}{\vartheta_{11}(w^{\mp2},q)},
\end{align*}
we obtain
\begin{align*}
\ch^*_{L^\pm_{-2+2/p}(\pm\mu_{0,s})}(q^{\mp1/2},q^3)&=q^{3(1-p)/8}\frac{q^{s/p}q^{3s^2/2p}\vartheta_{11}(qq^{3s},q^{3p})}{\vartheta_{11}(q,q^3)}\\
&=q^{c_{2,3p}/24}\frac{q^{(3s+1)^2/6p}\vartheta_{11}(q^{3s+1},q^{3p})}{q^{1/6}\vartheta_{11}(q,q^3)}\\
&=\ch^*_{L_\mathrm{Vir}(c_{2,3p},h_{1,3s+1}^{2,3p})}(q),
\end{align*}
proving \autoref{cor:charid} and \autoref{cor:lowestcharid} in the special case of $q=2$ almost instantaneously, as desired.


\subsection{Irreducible Modules: Near-Admissible Case \texorpdfstring{$q=1$}{q=1}}\label{sec:char}

We prove character identities between the irreducible modules for the simple affine \voa{}s $\smash{L_{-2+1/p}(\sl_2)}=\smash{V^{-2+1/p}(\sl_2)}$ at nonadmissible levels $k=k_{1,p}=-2+1/p$ for $p\in\Ns$, called near-admissible levels, and irreducible modules for the $(1,3p)$-minimal models $\smash{L_\mathrm{Vir}(c_{1,3p},0)}=\smash{V_\mathrm{Vir}(c_{1,3p},0)}$ at central charge $c_{1,3p}=13-18p-2/p$, assuming that $3\nmid q$. Some interesting applications will be studied in greater detail in \autoref{sec:gvsi}.

The main difference to the admissible case in \autoref{sec:admissible} is that there is now only one family~$\raisebox{.5pt}{\textcircled{\raisebox{-.9pt}{1}}}$ of singular vectors in the Verma modules (which will actually allow us to write down an explicit graded vector-space isomorphism between the irreducible modules). Analogously to \autoref{prop:vermasingadm}, we obtain:
\begin{prop}\label{prop:vermasingnear}
Let $p\in\Ns$. Let $\mu\in\C$. Then there exists a vector-space isomorphism \eqref{eq:phiplus}
\begin{equation*}
\varphi^+\colon M_{-2+1/p}(\mu)\overset{\sim}{\longrightarrow}M_\mathrm{Vir}(c_{1,3p},\mu(-2+3p(\mu+2))/4)
\end{equation*}
between Verma modules that maps a vector of $(h_0,L_0)$-weight $(f,n)$ to a vector of $L_0$-weight $n'=-f/2+3n$. If $\mu\notin\N$, then both sides of this correspondence are already irreducible.

On the other hand, if $\mu=\mu_{r,0}=r$ for $r\in\N$, this becomes a vector-space isomorphism \eqref{eq:phiplus}
\begin{equation*}
\varphi^+\colon M_{-2+1/p}(\mu_{r,0})\overset{\sim}{\longrightarrow}M_\mathrm{Vir}(c_{1,3p},h_{r+1,1}^{1,3p})
\end{equation*}
that maps the singular vector in $\smash{M_{-2+1/p}(\mu_{r,0})}$ of $(h_0,L_0)$-weight
\begin{equation*}
\raisebox{.5pt}{\textcircled{\raisebox{-.9pt}{1}}}~(r,\ell_{r,0}^{1,p})+(-2(r+1),0)
\end{equation*}
to a vector in $\smash{M_\mathrm{Vir}(c_{1,3p},h_{r+1,1}^{1,3p})}$ of $L_0$-weight
\begin{equation*}
\raisebox{.5pt}{\textcircled{\raisebox{-.9pt}{1}}}~h_{r+1,1}^{1,3p}+r+1,
\end{equation*}
which is exactly the $L_0$-weight of the singular vector $\raisebox{.5pt}{\textcircled{\raisebox{-.9pt}{1}}}$ in $\smash{M_\mathrm{Vir}(c_{1,3p},h_{r+1,1}^{1,3p})}$.
\end{prop}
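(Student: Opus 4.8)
The plan is to deduce the entire statement from \autoref{prop:vermacorr} by specialising the level, since the case $q=1$ differs from the admissible case only in that (at most) a single family of singular vectors is present. First I would substitute $k=k_{1,p}=-2+1/p$, so that $k+2=1/p$, into the two defining formulas there. A short computation gives $c(k)=13-2(k+2)-18/(k+2)=13-2/p-18p=c_{1,3p}$ and $h(k,\mu)=\mu(3\mu+2-2k)/(4(k+2))=\mu(-2+3p(\mu+2))/4$, which are exactly the central charge and Virasoro highest weight in the statement. Hence \autoref{prop:vermacorr} directly supplies the vector-space isomorphism $\varphi^+$ of \eqref{eq:phiplus} together with the grading relation $n'=-f/2+3n$, valid for every $\mu\in\C$; in particular this already yields the first displayed isomorphism and, on setting $\mu=\mu_{r,0}=r$, the second one.

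For the singular-vector claim in the case $\mu=r\in\N$, I would track the unique family $\raisebox{.5pt}{\textcircled{\raisebox{-.9pt}{1}}}$. As recalled in \autoref{sec:affine}, this vector is obtained by applying $f_0^{r+1}$ to the highest-weight vector, so it has $(h_0,L_0)$-weight $(-r-2,\ell_{r,0}^{1,p})$ with $\ell_{r,0}^{1,p}=pr(r+2)/4$, the conformal weight being unchanged since $f_0$ is a zero mode. Feeding $f=-r-2$ and $n=\ell_{r,0}^{1,p}$ into the grading relation gives the image $L_0$-weight $(r+2)/2+3\ell_{r,0}^{1,p}=(r+2)(3pr+2)/4$. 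On the Virasoro side, expanding the logarithmic formula $h_{r+1,1}^{1,3p}=((1-3p(r+1))^2-(3p-1)^2)/(12p)$ shows $h(k,r)=h_{r+1,1}^{1,3p}=r(3pr+6p-2)/4$, and the Virasoro singular vector $\raisebox{.5pt}{\textcircled{\raisebox{-.9pt}{1}}}$ sits at $L_0$-weight $h_{r+1,1}^{1,3p}+(r+1)$. A final expansion confirms $h_{r+1,1}^{1,3p}+(r+1)=(r+2)(3pr+2)/4$, so the two weights coincide. This is simply the $q=1$ degeneration of the singular-vector computation from the admissible case, with the second family $\raisebox{.5pt}{\textcircled{\raisebox{-.9pt}{2}}}$ now absent.

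The remaining assertion, irreducibility of both sides for $\mu\notin\N$, is the one part not formally contained in \autoref{prop:vermacorr}, and it is where I would be most careful. On the affine side, the irreducibility of $M_{-2+1/p}(\mu)$ for $\mu\notin\N$ is the representation-theoretic input recorded in \autoref{sec:affine}; on the Virasoro side, $M_\mathrm{Vir}(c_{1,3p},h)$ is reducible exactly when $h$ lies in the discrete set of degenerate values $h_{r',s'}^{1,3p}$ (see \autoref{sec:virasoro}). To conclude one must check that the affine reducibility locus, governed by the vanishing of the Kac--Kazhdan determinant for $\hat\sl_2$, corresponds under $\mu\mapsto h(k,\mu)$ to the Virasoro reducibility locus. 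I expect this comparison of the two reducibility loci, rather than the elementary singular-vector bookkeeping, to be the only delicate point: it holds for generic $\mu\notin\N$, and for the integer weights $\mu=r$ it is precisely the weight match established above. The robust output valid for \emph{all} $\mu$ is the vector-space isomorphism $\varphi^+$ and the induced character identity; the irreducibility statement should accordingly be read for generic $\mu\notin\N$.
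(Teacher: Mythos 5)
Your proposal matches the paper's own route: the paper obtains this proposition exactly as you do, by specialising \autoref{prop:vermacorr} to $k=-2+1/p$ (giving $c(k)=13-2/p-18p=c_{1,3p}$ and $h(k,\mu)=\mu(-2+3p(\mu+2))/4$) and checking that the image of the singular vector $f_0^{r+1}v$ of $(h_0,L_0)$-weight $(-r-2,\ell_{r,0}^{1,p})$ lands at $L_0$-weight $(r+2)(3pr+2)/4=h_{r+1,1}^{1,3p}+(r+1)$, and all of your computations are correct. Your caution on the irreducibility clause is also apt rather than a gap: the paper gives no argument for it beyond the facts recalled in \autoref{sec:affine} and \autoref{sec:virasoro}, which are stated there only for \emph{generic} $\mu\notin\N$ (the Kac--Kazhdan and Feigin--Fuchs reducibility loci contain further rational points outside $\N$), so your generic reading is the careful one.
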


Similarly to \autoref{cor:charid}, because the weights of the (at most one) singular vectors are preserved, we obtain a character identity for the irreducible quotients.
\begin{prop}\label{cor:charid2}
Let $p\in\Ns$. Let $\mu\in\C\setminus\N$. Then
\begin{equation*}
\ch^*_{L_{-2+1/p}(\mu)}(q^{-1/2},q^3)=\ch^*_{L_\mathrm{Vir}(c_{1,3p},\mu(-2+3p(\mu+2))/4)}(q)
\end{equation*}
where $L_\mathrm{Vir}(c_{1,3p},\mu(-2+3p(\mu+2))/4)=M_\mathrm{Vir}(c_{1,3p},\mu(-2+3p(\mu+2))/4)$ and $L_{-2+1/p}(\mu)=M_{-2+1/p}(\mu)$ are irreducible Verma modules.

On the other hand, let $r\in\N$. Then
\begin{equation*}
\ch^*_{L_{-2+1/p}(\mu_{r,0})}(q^{-1/2},q^3)=\ch^*_{M_{r+1,1;3p}}(q)
\end{equation*}
where $\smash{L_{-2+1/p}(\mu_{r,0})=V^{-2+1/p}(r)}$ is an irreducible Weyl module and moreover $\smash{M_{r+1,1;3p}=L_\mathrm{Vir}(c_{1,3p},h_{r+1,1}^{1,3p})}$ is an irreducible non-Verma module.
\end{prop}
\begin{proof}
The Jacobi character of the Weyl module $\smash{L_{-2+1/p}(\mu_{r,0})=V^{-2+1/p}(r)}$ for $\smash{V^{-2+1/p}(\sl_2)}$ is
\begin{equation*}
\ch^*_{L_{-2+1/p}(\mu_{r,0})}(w,q)=(w^r-w^{-(r+2)})\frac{q^{\frac{pr^2+2pr}{4}}}{(w^2q;q)_\infty(q;q)_\infty(w^{-2};q)_\infty}.
\end{equation*}
Applying $(w,q)\mapsto(q^{1/2},q^3)$ we obtain
\begin{align*}
&\ch^*_{L_{-2+1/p}(\mu_{r,0})}(q^{1/2},q^3)\\
&=(q^{-r/2}-q^{r/2+1})\frac{q^{\frac{3pr^2+6pr}{4}}}{(q^2;q^3)_\infty(q^3;q^3)_\infty(q;q^3)_\infty}\\
&=(1-q^{r+1})\frac{q^{\frac{3pr^2+6pr-2r}{4}}}{(q;q)_\infty}.
\end{align*}
On the other hand, the character for one of the Virasoro modules $M_{\tilde{r},\tilde{s};3p}$ is
\begin{equation*}
\ch^*_{M_{\tilde{r},\tilde{s};3p}}(q)=(1-q^{\tilde{r}\tilde{s}})\frac{q^{h_{\tilde{r},\tilde{s}}^{1,3p}}}{(q;q)_\infty},
\end{equation*}
which specialises to
\begin{equation*}
\ch^*_{M_{r+1,1;3p}}(q)=(1-q^{r+1})\frac{q^{\frac{3pr^2+6pr-2r}{4}}}{(q;q)_\infty}
\end{equation*}
for $(\tilde{r},\tilde{s})=(r+1,1)$. It then follows that, as asserted,
\begin{equation*}
\ch^*_{L_{-2+1/p}(\mu_{r,0})}(q^{1/2},q^3)=\ch^*_{M_{r+1,1;3p}}(q).\qedhere
\end{equation*}
\end{proof}
\autoref{cor:charid2} will also follow from the graded vector-space isomorphism constructed in \autoref{prop:gvsi} below.

We note that the character correspondence only maps to the irreducible modules $\smash{M_{r+1,\tilde{s};3p}}$ for $\tilde{s}=1$ and not for larger $1\leq\tilde{s}\leq3p$, i.e.\ we miss the majority of the $\smash{L_\mathrm{Vir}(c_{1,3p},0)}$-modules on the right-hand side.

For $r=0$ we obtain the character identity
\begin{equation*}
\ch^*_{L_{-2+1/p}(\sl_2)}(q^{-1/2},q^3)=\ch^*_{L_\mathrm{Vir}(c_{1,3p},0)}(q)
\end{equation*}
between the \voa{}s themselves.

\medskip

The corresponding result for lowest-weight modules is:
\begin{prop}\label{prop:lowestvermasingnear}
Let $p\in\Ns$. Let $\mu\in\C$. Then there is a vector-space isomorphism \eqref{eq:phiminus}
\begin{equation*}
\varphi^-\colon M^-_{-2+1/p}(-\mu)\overset{\sim}{\longrightarrow}M_\mathrm{Vir}(c_{1,3p},\mu(-2+3p(\mu+2))/4)
\end{equation*}
between lowest-weight Verma modules that maps a vector of $(h_0,L_0)$-weight $(f,n)$ to a vector of $L_0$-weight $n'=f/2+3n$. If $\mu\notin\N$, both sides of this correspondence are already irreducible.

On the other hand, if $\mu=\mu_{r,0}=r$ for $r\in\N$, this becomes a vector-space isomorphism \eqref{eq:phiminus}
\begin{equation*}
\varphi^-\colon M^-_{-2+1/p}(-\mu_{r,0})\overset{\sim}{\longrightarrow}M_\mathrm{Vir}(c_{1,3p},h_{r+1,1}^{1,3p})
\end{equation*}
that maps the singular vector in $\smash{M^-_{-2+1/p}(-\mu_{r,0})}$ of $(h_0,L_0)$-weight
\begin{equation*}
\raisebox{.5pt}{\textcircled{\raisebox{-.9pt}{1}}}~(-r,\ell_{r,0}^{1,p})+(2(r+1),0)
\end{equation*}
to a vector in $\smash{M_\mathrm{Vir}(c_{1,3p},h_{r+1,1}^{1,3p})}$ of $L_0$-weight
\begin{equation*}
\raisebox{.5pt}{\textcircled{\raisebox{-.9pt}{1}}}~h_{r+1,1}^{1,3p}+r+1,
\end{equation*}
which is exactly the $L_0$-weight of the singular vector $\raisebox{.5pt}{\textcircled{\raisebox{-.9pt}{1}}}$ in $\smash{M_\mathrm{Vir}(c_{1,3p},h_{r+1,1}^{1,3p})}$.
\end{prop}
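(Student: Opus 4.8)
The plan is to deduce the whole statement from the general lowest-weight correspondence of \autoref{prop:vermacorr2}, specialised to the near-admissible level $k=k_{1,p}=-2+1/p$. The only thing to check for the first sentence is that the two general parameters $c=c(k)$ and $h=h(k,\mu)$ there specialise to the values asserted here. Since $k+2=1/p$, substituting into $c(k)=13-2(k+2)-18/(k+2)$ gives $c(k)=13-2/p-18p=c_{1,3p}$, and substituting into $h(k,\mu)=\mu(3\mu+2-2k)/(4(k+2))$ gives $h(k,\mu)=p\mu(3\mu+6-2/p)/4=\mu(-2+3p(\mu+2))/4$, exactly the claimed weight. With these identifications in place, \autoref{prop:vermacorr2} already provides the vector-space isomorphism $\varphi^-$ of \eqref{eq:phiminus} together with the grading rule $n'=f/2+3n$, so the opening assertion is immediate.

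For the irreducibility statement when $\mu\notin\N$, I would simply appeal to the representation theory recalled in \autoref{sec:reps}: for generic $\mu\notin\N$ the lowest-weight Verma module $M^-_{-2+1/p}(-\mu)$ carries no singular vector and hence already equals its simple quotient $L^-_{-2+1/p}(-\mu)$, while on the Virasoro side the weight $h(k,\mu)$ is non-degenerate for the $(1,3p)$-model, so that $M_\mathrm{Vir}(c_{1,3p},h(k,\mu))=L_\mathrm{Vir}(c_{1,3p},h(k,\mu))$ is irreducible as well.

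The substantive point is the singular-vector weight matching in the case $\mu=\mu_{r,0}=r$ with $r\in\N$, which mirrors the $\varphi^+$ computation in \autoref{prop:vermasingnear}. The lowest-weight vector of $M^-_{-2+1/p}(-\mu_{r,0})$ has $(h_0,L_0)$-weight $(-r,\ell_{r,0}^{1,p})$ with $\ell_{r,0}^{1,p}=pr(r+2)/4$, and the singular vector $\raisebox{.5pt}{\textcircled{\raisebox{-.9pt}{1}}}$ sits at relative weight $(2(r+1),0)$, hence at absolute weight $(r+2,\ell_{r,0}^{1,p})$. Applying the grading rule $n'=f/2+3n$ sends it to $L_0$-weight $(r+2)/2+3pr(r+2)/4$. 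I would then evaluate $h_{r+1,1}^{1,3p}$ directly from its definition, obtaining $h_{r+1,1}^{1,3p}=3pr(r+2)/4-r/2$, so that the $L_0$-weight $h_{r+1,1}^{1,3p}+(r+1)$ of the Virasoro singular vector equals $3pr(r+2)/4+(r+2)/2$, matching the image computed above.

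There is no genuine obstacle here once \autoref{prop:vermacorr2} is available; the argument is essentially arithmetic. The one thing to watch is the sign bookkeeping: because $\varphi^-$ exchanges the roles of $e$ and $f$ relative to $\varphi^+$, the singular vector now lies in the $+2(r+1)$ direction in $h_0$-weight, and the grading rule carries the opposite sign $n'=+f/2+3n$. These two sign flips cancel, producing the same image $L_0$-weight as in the highest-weight case, which is precisely why the target module and weight $h_{r+1,1}^{1,3p}$ coincide with those of \autoref{prop:vermasingnear}.
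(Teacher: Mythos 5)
Your proposal is correct and takes essentially the same route as the paper, which states this proposition without separate proof precisely because it is the specialisation of the lowest-weight isomorphism $\varphi^-$ of \autoref{prop:vermacorr2} to $k=-2+1/p$, together with the checks you perform: $c(k)=c_{1,3p}$, $h(k,\mu)=\mu(-2+3p(\mu+2))/4$, and the singular-vector weight match via $h_{r+1,1}^{1,3p}=3pr(r+2)/4-r/2$. Your arithmetic is right, and your closing observation that the sign flip in the $h_0$-weight of the singular vector cancels against the opposite sign in the grading rule $n'=f/2+3n$ is exactly why the target weight coincides with that of \autoref{prop:vermasingnear}.
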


Again, we obtain a character identity for the irreducible quotients.
\begin{prop}\label{cor:lowestcharid2}
Let $p\in\Ns$. Let $\mu\in\C\setminus\N$. Then
\begin{equation*}
\ch^*_{L^-_{-2+1/p}(-\mu)}(q^{1/2},q^3)=\ch^*_{L_\mathrm{Vir}(c_{1,3p},\mu(-2+3p(\mu+2))/4)}(q)
\end{equation*}
where $L_\mathrm{Vir}(c_{1,3p},\mu(-2+3p(\mu+2))/4)=M_\mathrm{Vir}(c_{1,3p},\mu(-2+3p(\mu+2))/4)$ and $L^-_{-2+1/p}(-\mu)=M^-_{-2+1/p}(-\mu)$ are irreducible Verma modules.

On the other hand, let $r\in\N$. Then
\begin{equation*}
\ch^*_{L^-_{-2+1/p}(-\mu_{r,0})}(q^{1/2},q^3)=\ch^*_{M_{r+1,1;3p}}(q)
\end{equation*}
where $\smash{L^-_{-2+1/p}(-\mu_{r,0})=V^{-2+1/p}(r)}$ is an irreducible Weyl module and moreover $\smash{M_{r+1,1;3p}=L_\mathrm{Vir}(c_{1,3p},h_{r+1,1}^{1,3p})}$.
\end{prop}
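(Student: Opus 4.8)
The plan is to prove the two cases of the statement separately, exactly paralleling the treatment of the highest-weight result, and to exploit that passing from highest- to lowest-weight modules corresponds on the level of characters to the substitution $w\mapsto w^{-1}$. The lowest-weight map $\varphi^-$ of \eqref{eq:phiminus} is the mirror image of $\varphi^+$ under the inner automorphism $e\leftrightarrow f$, $h\mapsto-h$ of $\sl_2$, and this symmetry is what makes the argument essentially a transcription of \autoref{cor:charid2}.

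First I would dispose of the generic case $\mu\in\C\setminus\N$. Here \autoref{prop:vermacorr2} already supplies the Verma-module character identity $\ch^*_{M^-_k(-\mu)}(q^{1/2},q^3)=\ch^*_{M_\mathrm{Vir}(c,h)}(q)$ for arbitrary level $k$ and weight $\mu$, with $c=c(k)$ and $h=h(k,\mu)$. Specialising $k=-2+1/p$, so that $k+2=1/p$, a short computation gives $c(k)=13-2/p-18p=c_{1,3p}$ and $h(k,\mu)=\mu(-2+3p(\mu+2))/4$, matching the parameters named in the statement. By \autoref{prop:lowestvermasingnear}, for $\mu\notin\N$ both $M^-_{-2+1/p}(-\mu)$ and $M_\mathrm{Vir}(c_{1,3p},h)$ are already irreducible, i.e.\ they coincide with $L^-_{-2+1/p}(-\mu)$ and $L_\mathrm{Vir}(c_{1,3p},h)$, respectively. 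Hence the Verma-module identity of \autoref{prop:vermacorr2} is precisely the desired identity for irreducible modules, with no further work required.

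For the Weyl-module case $\mu=\mu_{r,0}=r\in\N$, I would invoke the identification $L^-_{-2+1/p}(-\mu_{r,0})=V^{-2+1/p}(r)=L_{-2+1/p}(\mu_{r,0})$ recalled in \autoref{sec:reps}, so that the lowest- and highest-weight irreducible Weyl modules literally coincide. Its Jacobi character $(w^r-w^{-(r+2)})q^{\ell_{r,0}^{1,p}}/((w^2q;q)_\infty(q;q)_\infty(w^{-2};q)_\infty)$ is manifestly invariant under $w\mapsto w^{-1}$, so the two substitutions $(w,q)\mapsto(q^{\pm1/2},q^3)$ produce identical series. Consequently $\ch^*_{L^-_{-2+1/p}(-\mu_{r,0})}(q^{1/2},q^3)$ equals $\ch^*_{L_{-2+1/p}(\mu_{r,0})}(q^{-1/2},q^3)$, which the proof of \autoref{cor:charid2} already evaluates to $(1-q^{r+1})q^{(3pr^2+6pr-2r)/4}/(q;q)_\infty=\ch^*_{M_{r+1,1;3p}}(q)$; alternatively one repeats that explicit computation verbatim with the substitution $(q^{1/2},q^3)$.

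There is no genuine obstacle: the lowest-weight statement is the $w\mapsto w^{-1}$ mirror of the highest-weight statement, and the only bookkeeping is to confirm that the single relevant singular vector $\raisebox{.5pt}{\textcircled{\raisebox{-.9pt}{1}}}$ has its $L_0$-weight preserved (the content of \autoref{prop:lowestvermasingnear}) and that the Weyl-module character's symmetry renders the sign in $q^{\pm1/2}$ irrelevant. The mildly delicate point is purely organisational, namely citing the irreducibility assertion of \autoref{prop:lowestvermasingnear} rather than reproving that for $\mu\notin\N$ the Virasoro weight $h(\mu)$ avoids the degenerate values $h_{r,s}^{1,3p}$.
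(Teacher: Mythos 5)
Your proof is correct and follows essentially the same route as the paper, which gives \autoref{cor:lowestcharid2} no separate proof precisely because it is the $w\mapsto w^{-1}$ mirror of \autoref{cor:charid2}: the generic case is \autoref{prop:vermacorr2} specialised to $k+2=1/p$ (your computations of $c(k)=c_{1,3p}$ and $h(k,\mu)=\mu(-2+3p(\mu+2))/4$ are right) combined with the irreducibility assertion of \autoref{prop:lowestvermasingnear}, while the Weyl case is the same explicit character computation as in \autoref{cor:charid2}, which the paper also reobtains via the isomorphism $\psi^-$ of \autoref{prop:gvsi}. One small polish: the invariance under $w\mapsto w^{-1}$ is not actually manifest in the form $(w^r-w^{-(r+2)})q^{\ell_{r,0}^{1,p}}/((w^2q;q)_\infty(q;q)_\infty(w^{-2};q)_\infty)$ that you quote, but it is immediate from the equivalent Weyl-character form $\frac{w^{r+1}-w^{-(r+1)}}{w-w^{-1}}\cdot\frac{q^{\ell_{r,0}^{1,p}}}{\prod_{n=1}^\infty(1-w^2q^n)(1-q^n)(1-w^{-2}q^n)}$, exactly as the paper notes for the analogous symmetry in the admissible case after \autoref{cor:lowestcharid}.
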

For $r=0$ we obtain the character identity
\begin{equation*}
\ch^*_{L_{-2+1/p}(\sl_2)}(q^{1/2},q^3)=\ch^*_{L_\mathrm{Vir}(c_{1,3p},0)}(q)
\end{equation*}
between the \voa{}s themselves.

\subsubsection*{Vector-Space Isomorphism}\label{sec:proof}

Analogously to \autoref{rem:inducedGVSI} in the admissible case, the character identities in \autoref{cor:charid2} and \autoref{cor:lowestcharid2} are equivalent to the existence of graded vector-space isomorphisms
\begin{equation}\label{eq:psiplusminus}
\psi^\pm\colon L^\pm_{-2+1/p}(\pm\mu_{r,0})\overset{\sim}{\longrightarrow}M_{r+1,1;3p},
\end{equation}
respectively, for $r\in\N$. In contrast to the admissible case in \autoref{sec:admissible}, there is now only one family of singular vectors, which actually allows us in the following to describe such isomorphisms $\psi^+$ and $\psi^-$ explicitly.

But note that they are not unique; it is not clear why the map that we describe in the following is the one induced from the original map $\varphi^\pm$ on the Verma modules by taking the quotient by the singular vectors. (Of course, for $\mu\notin\N$, the Verma modules are already irreducible and $\psi^\pm$ and $\varphi^\pm$ are identical.)

We note that in the special case of $p=2$, the map $\psi^-$ was first described in \cite{BN22}, which partially served as motivation for this paper.

\medskip

The Weyl module $\smash{L_{-2+1/p}(r)}=\smash{L^-_{-2+1/p}(-r)=V^{-2+1/p}(r)}$ (see \autoref{sec:affine}) is, by the Poincaré-Birkhoff-Witt theorem, as a vector space of the form
\begin{equation*}
V^{-2+1/p}(r)\cong U(\smash{\hat{\sl}}_2^{<0})\otimes U_r
\end{equation*}
where $\smash{\hat{\sl}}_2^{<0}=\sl_2\otimes t^{-1}\C[t^{-1}]\subseteq\hat\sl_2$ and $U_r$ denotes the irreducible $(r+1)$-dimensional $\sl_2$-module of highest weight $r\Lambda_1$. In other words, the Weyl module $V^{-2+1/p}(r)$ has a basis consisting of the vectors
\begin{equation*}
\prod_{i=1}^\infty(e_{-i})^{k_i}(h_{-i})^{l_i}(f_{-i})^{m_i}v_s,
\end{equation*}
where $k=(k_i)_{i=1}^\infty$, $l=(l_i)_{i=1}^\infty$ and $m=(m_i)_{i=1}^\infty$ are finite sequences with values in $\N$ and $s\in\{-r,-r+2,\ldots,r\}$ labels the vectors $v_s$ of $h_0$-weight $s$ (and $L_0$-weight $(pr^2+2pr)/4$) that span the $(r+1)$-dimensional, irreducible $\sl_2$-module $U_r$ from which $V^{-2+1/p}(r)$ is induced.

On the other hand, as it is the quotient of a Verma module by one singular vector of $L_0$-weight $h_{r+1,1}^{1,3p}+(r+1)$, the Virasoro module $M_{r+1,1;3p}$ has a basis consisting of vectors of the form
\begin{equation*}
\prod_{i=2}^\infty(L_{-i})^{m_i} L_{-1}^s v_{h_{r+1,1}^{1,3p}},
\end{equation*}
where $m=(m_i)_{i=2}^\infty$ is a finite sequence with values in $\N$, $s\in\{0,1,\ldots,r\}$ and $\smash{v_{h_{r+1,1}^{1,3p}}}$ is the vector of $L_0$-weight $\smash{h_{r+1,1}^{1,3p}=r(-2+3p(2+r))/4}$ from which the Verma module is induced.

The linear maps $\psi^\pm\colon V^{-2+1/p}(r)\overset{\sim}{\longrightarrow}M_{r+1,1;3p}$ in \eqref{eq:psiplusminus} are then defined by
\begin{equation*}
\prod_{i=1}^\infty(e_{-i})^{k_i}(h_{-i})^{l_i}(f_{-i})^{m_i}v_s\mapsto\prod_{i=1}^\infty(L_{-3i\pm1})^{k_i}(L_{-3i})^{l_i}(L_{-3i\mp1})^{m_i}L_{-1}^{(r\mp s)/2} v_{h_{r+1,1}^{1,3p}},
\end{equation*}
i.e.\ by mapping
\begin{equation*}
e_{-i}\mapsto L_{-3i\pm1},\quad h_{-i}\mapsto L_{-3i},\quad f_{-i}\mapsto L_{-3i\mp1}\quad\text{and}\quad v_s\mapsto L_{-1}^{(r\mp s)/2} v_{h_{r+1,1}^{1,3p}}
\end{equation*}
for $i\in\Ns$ and $s\in\{-r,-r+2,\ldots,r\}$. As we explicitly described a basis of the domain and the codomain of the maps $\psi^\pm$, it follows:
\begin{prop}\label{prop:gvsi}
Let $p\in\Ns$. The maps $\psi^\pm\colon L_{-2+1/p}(\mu_{r,0})\overset{\sim}{\longrightarrow}M_{r+1,1;3p}$ in \eqref{eq:psiplusminus} for $r\in\N$ are vector-space isomorphisms that map a homogeneous vector of $h_0$-weight $f$ and $L_0$-weight~$n$ to a vector of $L_0$-weight $n'=\mp f/2+3n$.
\end{prop}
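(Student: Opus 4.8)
The plan is to verify directly that the two explicitly described PBW bases are matched bijectively by $\psi^\pm$ and that the map respects the asserted grading. First I would recall the two bases laid out just before the statement: the Weyl module $V^{-2+1/p}(r)$ has basis $\prod_{i\ge1}(e_{-i})^{k_i}(h_{-i})^{l_i}(f_{-i})^{m_i}v_s$ with $s\in\{-r,-r+2,\dots,r\}$, and $M_{r+1,1;3p}$ has basis $\prod_{i\ge2}(L_{-i})^{m_i}L_{-1}^t v_{h}$ with $t\in\{0,1,\dots,r\}$ (writing $h=h_{r+1,1}^{1,3p}$ for brevity). The crux is that the exponent $s$ in the top $\sl_2$-module $U_r$ ranges over the $r+1$ values $\{-r,-r+2,\dots,r\}$, exactly as many as the $r+1$ allowed powers $t\in\{0,\dots,r\}$ of $L_{-1}$ in $M_{r+1,1;3p}$, and that the assignment $s\mapsto t=(r\mp s)/2$ is a bijection between these two index sets. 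Indeed, as $s$ runs through $\{-r,-r+2,\dots,r\}$, the quantity $(r\mp s)/2$ runs through $\{0,1,\dots,r\}$, so $\psi^\pm$ sends a basis to a basis and is therefore a vector-space isomorphism.

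Next I would confirm the weight formula $n'=\mp f/2+3n$. The generators map as $e_{-i}\mapsto L_{-3i\pm1}$, $h_{-i}\mapsto L_{-3i}$, $f_{-i}\mapsto L_{-3i\mp1}$, so a creation operator of $(h_0,L_0)$-weight $(a,i)$ with $a\in\{2,0,-2\}$ (for $e,h,f$ respectively) is sent to an operator of $L_0$-weight $3i\mp a/2$; summing over all modes contributes $3n_{\mathrm{modes}}\mp f_{\mathrm{modes}}/2$, where $n_{\mathrm{modes}}$ and $f_{\mathrm{modes}}$ are the total $L_0$- and $h_0$-weight carried by the creation operators. For the top vector, $v_s$ has $(h_0,L_0)$-weight $(s,\ell_{r,0}^{1,p})$ and is sent to $L_{-1}^{(r\mp s)/2}v_h$, which has $L_0$-weight $h+(r\mp s)/2$. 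It then remains to check that the total $L_0$-weights add up correctly, i.e.\ that the constant offset works out. Using $\ell_{r,0}^{1,p}=(pr^2+2pr)/4$ and $h=h_{r+1,1}^{1,3p}=r(-2+3p(r+2))/4$, one computes $h+(r\mp s)/2=\mp s/2+3\ell_{r,0}^{1,p}$, so that the top vector already satisfies $n'=\mp f/2+3n$ with $f=s$, $n=\ell_{r,0}^{1,p}$; since the creation operators independently contribute the correct $3\cdot(\text{mode weight})\mp(\text{mode weight})/2$, the formula holds on all basis vectors by additivity.

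I expect the main (though modest) obstacle to be the bookkeeping in the offset computation of the previous paragraph: one must verify the single numerical identity $h_{r+1,1}^{1,3p}+(r\mp s)/2=\mp s/2+3\ell_{r,0}^{1,p}$, which reduces (after cancelling the $\mp s/2$ terms) to $h_{r+1,1}^{1,3p}+r/2=3\ell_{r,0}^{1,p}$, a routine polynomial identity in $r$ and $p$. Everything else is combinatorial matching of PBW bases, already prepared in the text. Note that I would not claim $\psi^\pm$ to be a module homomorphism here — the statement asserts only that it is a grading-compatible vector-space isomorphism — so no compatibility with the vertex operator structure needs to be checked; that subtlety is deferred, as flagged in \autoref{rem:inducedGVSI}.
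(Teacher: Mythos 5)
Your proposal is correct and follows essentially the same route as the paper: the paper also defines $\psi^\pm$ on the explicit PBW-type bases of $V^{-2+1/p}(r)$ and of $M_{r+1,1;3p}$ (the latter with $L_{-1}$-powers capped at $r$ by the single singular vector $\raisebox{.5pt}{\textcircled{\raisebox{-.9pt}{1}}}$) and concludes the isomorphism from the basis-to-basis matching, with the grading statement read off from the mode assignment. Your explicit verification of the index bijection $s\mapsto(r\mp s)/2$ and of the offset identity $h_{r+1,1}^{1,3p}+r/2=3\ell_{r,0}^{1,p}$ just makes explicit what the paper leaves implicit, and both computations check out.
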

This gives another proof of \autoref{cor:charid2} and \autoref{cor:lowestcharid2}.


\subsection{Universal \VOA{}s}\label{sec:universal}

As a digression, we note that the vector-space isomorphisms $\psi^\pm$ constructed in the previous section can be applied verbatim to obtain a correspondence between the universal affine \voa{} $V^k(\sl_2)$ (i.e.\ the trivial Weyl module) and the universal Virasoro \voa{} $V_\mathrm{Vir}(c,0)$. This holds at any level $k\in\C\setminus\{-2\}$ and central charge $c\in\C$; the only relevant point is that they both can be obtained from Verma modules by taking the quotient by one singular vector, namely the one belonging to family~$\raisebox{.5pt}{\textcircled{\raisebox{-.9pt}{1}}}$.

For the $V^k(\sl_2)$-Verma module $M_k(0)$ of highest weight $0\Lambda_1$, the singular vector~$\raisebox{.5pt}{\textcircled{\raisebox{-.9pt}{1}}}$ exists and has $(h_0,L_0)$-weight $(-2,0)$, independent of the level~$k$; or weight $(2,0)$ if we consider the lowest-weight Verma module $\smash{M^-_k(0)}$ instead. Similarly, the $V_\mathrm{Vir}(c,0)$-Verma module $M_\mathrm{Vir}(c,0)$ has the singular vector~$\raisebox{.5pt}{\textcircled{\raisebox{-.9pt}{1}}}$ of $L_0$-weight $1$, independent of the central charge~$c$. Hence, arguing as before, we obtain on the level of the characters:
\begin{prop}\label{prop:chariduniversal}
For all $k\in\C\setminus\{-2\}$ and $c\in\C$ the character identities
\begin{equation*}
\ch^*_{V^k(\sl_2)}(q^{\mp1/2},q^3)=\ch^*_{V_\mathrm{Vir}(c,0)}(q)
\end{equation*}
for the universal \voa{}s hold.
\end{prop}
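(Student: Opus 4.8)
The plan is to reduce everything to the Verma-module correspondence already in hand and then argue, exactly as in the proof of \autoref{cor:charid2}, that the single relevant singular vector is weight-preserving, so that the identity descends to the quotients. First I would apply \autoref{prop:vermacorr} (respectively \autoref{prop:vermacorr2}) with $\mu=0$, which forces $h=h(k,0)=0$ and yields a graded vector-space isomorphism $\varphi^+\colon M_k(0)\overset{\sim}{\longrightarrow}M_\mathrm{Vir}(c,0)$ (resp.\ $\varphi^-\colon M^-_k(0)\overset{\sim}{\longrightarrow}M_\mathrm{Vir}(c,0)$) sending a vector of $(h_0,L_0)$-weight $(f,n)$ to one of $L_0$-weight $n'=\mp f/2+3n$. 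This immediately gives the Verma-module identity $\ch^*_{M_k(0)}(q^{\mp1/2},q^3)=\ch^*_{M_\mathrm{Vir}(c,0)}(q)$.

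Second, I would record that both universal \voa{}s arise from these Verma modules by quotienting out a \emph{single} singular vector $\raisebox{.5pt}{\textcircled{\raisebox{-.9pt}{1}}}$: on the affine side this vector has $(h_0,L_0)$-weight $(\mp2,0)$, independent of $k$, while on the Virasoro side $L_{-1}\vac$ has $L_0$-weight $1$, independent of $c$. The crucial compatibility is that $\varphi^\pm$ sends the affine singular vector to $L_0$-weight $\mp(\mp2)/2+3\cdot0=1$, matching the Virasoro one; here the two signs cancel, so the image weight is $1$ in both cases. Thus, just as in the admissible and near-admissible cases, the weight of the unique defining singular vector is preserved, and to descend from Verma modules to the quotients one may observe that the submodule generated by $\raisebox{.5pt}{\textcircled{\raisebox{-.9pt}{1}}}$ is itself a Verma module, namely $M_k(\mp2)$ on the affine side and $M_\mathrm{Vir}(c,1)$ on the Virasoro side, and that these again correspond under the same construction at the shifted weight; subtracting characters then yields the claim.

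The most robust route, however, and the one I would actually carry out, is a direct character computation, since here the universal \voa{} characters are explicit. Using $\ch^*_{V^k(\sl_2)}(w,q)=\prod_{n\geq1}(1-w^2q^n)^{-1}(1-q^n)^{-1}(1-w^{-2}q^n)^{-1}$ together with the $w\mapsto w^{-1}$ symmetry of this vacuum character (so both signs of $q^{\mp1/2}$ coincide), the substitution $(w,q)\mapsto(q^{\mp1/2},q^3)$ collapses the three products into factors with exponents $3n-1$, $3n$ and $3n+1$, whose union over $n\geq1$ is precisely all integers $\geq2$, giving $(1-q)/(q;q)_\infty$. On the Virasoro side, $V_\mathrm{Vir}(c,0)\cong U(\mathcal{L}_{\leq2})$ has character $\prod_{n\geq2}(1-q^n)^{-1}=(1-q)/(q;q)_\infty$, matching exactly.

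The only genuine subtlety in the conceptual argument is justifying that the identity descends to the quotients rather than merely holding for the Verma modules, which is precisely where the weight-matching of the family-$\raisebox{.5pt}{\textcircled{\raisebox{-.9pt}{1}}}$ singular vector is needed. Since the direct computation in the previous paragraph bypasses this step entirely, I expect no real obstacle, and this is essentially why the proof can be phrased simply as ``arguing as before''.
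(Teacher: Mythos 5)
Your proposal is correct and takes essentially the same route as the paper: the weight-matching of the single family-$\raisebox{.5pt}{\textcircled{\raisebox{-.9pt}{1}}}$ singular vector (of $(h_0,L_0)$-weight $(\mp2,0)$, landing at $L_0$-weight $1$ under $\varphi^\pm$, with the submodule subtraction via $M_k(-2)$ and $M_\mathrm{Vir}(c,1)$, where indeed $h(k,-2)=1$) is exactly what the paper means by ``arguing as before''. Your direct product computation, collapsing the three factors onto exponents $3n-1$, $3n$, $3n+1$ and hence onto all integers $\geq2$, is precisely the character-level shadow of the explicit PBW-basis bijections $\psi^\pm$ ($e_{-i}\mapsto L_{-3i\pm1}$, $h_{-i}\mapsto L_{-3i}$, $f_{-i}\mapsto L_{-3i\mp1}$) that the paper constructs immediately afterwards in \autoref{prop:gvsiuniversal}, so there is no gap.
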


This result is equivalent to the existence (not uniquely) of graded vector-space isomorphisms
\begin{equation}\label{eq:psiplusminus2}
\psi^\pm\colon V^k(\sl_2)\overset{\sim}{\longrightarrow}V_\mathrm{Vir}(c,0),
\end{equation}
which we construct in the following.

The universal affine \voa{} $V^k(\sl_2)$ is the trivial Weyl module $V^k(0)$ for $\smash{\hat\sl_2}$. Specialising the discussion in \autoref{sec:proof} to $r=0$, we recall that it is as a vector space of the form
\begin{equation*}
V^k(\sl_2)\cong U(\smash{\hat{\sl}}_2^{<0})\otimes U_0
\end{equation*}
where $U_0$ denotes the irreducible $1$-dimensional $\sl_2$-module of highest weight $0\Lambda_1$. In other words, it has a basis consisting of the vectors $\prod_{i=1}^\infty(e_{-i})^{k_i}(h_{-i})^{l_i}(f_{-i})^{m_i}\vac$, where $k=(k_i)_{i=1}^\infty$, $l=(l_i)_{i=1}^\infty$ and $m=(m_i)_{i=1}^\infty$ are finite sequences with values in $\N$ and the vacuum vector $\vac$ is the unique vector with $h_0$-weight $0$ and $L_0$-weight~$0$ that spans the $1$-dimensional, irreducible $\sl_2$-module $U_0$ from which $V^k(\sl_2)$ is induced.

On the other hand, the universal Virasoro \voa{} $V_\mathrm{Vir}(c,0)$ is a quotient of the Verma module $M_\mathrm{Vir}(c,0)$ by the singular vector $L_{-1}\vac$ of $L_0$-weight~$1$. $V_\mathrm{Vir}(c,0)$ has a basis consisting of vectors of the form $\prod_{i=2}^\infty(L_{-i})^{m_i}\vac$, where $m=(m_i)_{i=2}^\infty$ is a finite sequence with values in $\N$ and the vacuum vector~$\vac$ is the unique vector of $L_0$-weight $0$ from which the Verma module is induced.

We can then define the linear maps $\psi^\pm\colon V^k(\sl_2)\overset{\sim}{\longrightarrow}V_\mathrm{Vir}(c,0)$ in \eqref{eq:psiplusminus2} via
\begin{equation*}
\prod_{i=1}^\infty(e_{-i})^{k_i}(h_{-i})^{l_i}(f_{-i})^{m_i}\vac\mapsto\prod_{i=1}^\infty(L_{-3i\pm1})^{k_i}(L_{-3i})^{l_i}(L_{-3i\mp1})^{m_i}\vac,
\end{equation*}
i.e.\ by mapping $e_{-i}\mapsto L_{-3i\pm1}$, $h_{-i}\mapsto L_{-3i}$, $f_{-i}\mapsto L_{-3i\mp1}$ for $i\in\Ns$ and $\vac\mapsto\vac$. We have shown:
\begin{prop}\label{prop:gvsiuniversal}
For $k\in\C\setminus\{-2\}$, $c\in\C$ the maps $\psi^\pm\colon V^k(\sl_2)\overset{\sim}{\longrightarrow}V_\mathrm{Vir}(c,0)$ in \eqref{eq:psiplusminus2} are vector-space isomorphisms that map a homogeneous vector of $h_0$-weight~$f$ and $L_0$-weight $n$ to a vector of $L_0$-weight $n'=\mp f/2+3n$, respectively.
\end{prop}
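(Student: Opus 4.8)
The plan is to check directly that $\psi^\pm$ carries the Poincaré-Birkhoff-Witt basis of $V^k(\sl_2)$ described above bijectively onto a PBW basis of $V_\mathrm{Vir}(c,0)$, and then to read off the effect on the $L_0$-grading from the action on generators. Because $\psi^\pm$ is specified by its values on basis vectors, it is automatically a well-defined linear map, so the content lies entirely in bijectivity and in the weight formula.

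The crucial point is that the assignment $e_{-i}\mapsto L_{-3i\pm1}$, $h_{-i}\mapsto L_{-3i}$, $f_{-i}\mapsto L_{-3i\mp1}$ (for $i\in\Ns$) is a bijection from the generating set $\{e_{-i},h_{-i},f_{-i}:i\in\Ns\}$ of $\smash{\hat{\sl}}_2^{<0}$ onto the set of Virasoro modes $\{L_{-j}:j\geq2\}$. For $\psi^+$, the three families $\{e_{-i}\}$, $\{h_{-i}\}$, $\{f_{-i}\}$ map to the $L_{-j}$ with indices $j\equiv2,0,1\pmod 3$, respectively, and for each fixed $i$ the block $\{3i-1,3i,3i+1\}$ is a run of three consecutive integers; as $i$ ranges over $\Ns$, these blocks tile $\{j\in\Z:j\geq2\}$ without gaps or overlaps, so the map on generators is a bijection. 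The same holds for $\psi^-$ with the roles of $e$ and $f$ exchanged. Recalling that the PBW theorem produces a basis of $V_\mathrm{Vir}(c,0)$ for \emph{any} fixed total order on $\{L_{-j}:j\geq2\}$, we choose the order transported from the affine order via this generator bijection; then the image $\prod_i(L_{-3i\pm1})^{k_i}(L_{-3i})^{l_i}(L_{-3i\mp1})^{m_i}\vac$ of each affine basis monomial is literally an ordered Virasoro PBW monomial, and distinct affine monomials map to distinct Virasoro monomials. Hence $\psi^\pm$ sends a basis to a basis and is a vector-space isomorphism.

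It remains to track the grading. An affine basis vector $\prod_i(e_{-i})^{k_i}(h_{-i})^{l_i}(f_{-i})^{m_i}\vac$ has $h_0$-weight $f=2\sum_i(k_i-m_i)$ and $L_0$-weight $n=\sum_i i(k_i+l_i+m_i)$. Under $\psi^+$ its image has $L_0$-weight $\sum_i\bigl((3i-1)k_i+3i\,l_i+(3i+1)m_i\bigr)=3n-\sum_i(k_i-m_i)=3n-f/2$, while under $\psi^-$ the correction term changes sign, yielding $3n+f/2$; together this is the claimed $n'=\mp f/2+3n$. I do not expect a genuine obstacle in this argument: the one point deserving care is the verification that the generator map hits every $L_{-j}$ with $j\geq2$ exactly once, so that no Virasoro mode is omitted or repeated, and this is precisely the tiling of $\{j\geq2\}$ by the blocks $\{3i-1,3i,3i+1\}$ noted above.
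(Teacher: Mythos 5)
Your proof is correct and takes essentially the same route as the paper: the paper likewise exhibits the PBW basis of $V^k(\sl_2)$ (monomials in the $e_{-i},h_{-i},f_{-i}$) and of $V_\mathrm{Vir}(c,0)$ (monomials in the $L_{-j}$, $j\geq2$), defines $\psi^\pm$ by the generator assignment on these bases, and reads off $n'=\mp f/2+3n$ from the shift of mode indices. Your explicit checks — that the blocks $\{3i-1,3i,3i+1\}$ tile $\{j\geq2\}$ without gap or overlap, that PBW yields a basis for the transported total order, and the weight bookkeeping — simply spell out details the paper leaves implicit.
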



\section{Integral Case \texorpdfstring{$p=1$}{p=1}}\label{sec:integral}

In the following, we specialise the character identities in the case of admissible levels $k=-2+q/p$ (see \autoref{sec:admissible}) to the integral case $p=1$. Here, we shall see that they induce a Galois conjugation between the representation categories; and for small values of $q$, the characters are also related by certain Hecke operators. Further specialising to $q=4$, we give an explicit graded vector-space isomorphism between the irreducible modules by embedding them into free-fermion \svoa{}s.


\subsection{Galois Conjugation and Hecke Operators}\label{sec:galois}

Consider the simple affine \voa{} $L_{-2+q}(\sl_2)$ for $q\geq2$, with central charge $c=3-6/q$. It is \strat{} and the exactly $q-1$ irreducible modules are $L_{-2+q}(\mu_{r,0})$ for $r\in\Z$ with $0\leq r\leq q-2$. Their lowest $L_0$-weights are given by
\begin{equation*}
\ell_{r,0}^{q,1}=\frac{r(r+2)}{4q}
\end{equation*}
and we recall that the fusion rules are
\begin{equation*}
L_{-2+q}(\mu_{r_1,0})\boxtimes L_{-2+q}(\mu_{r_2,0})\cong\bigoplus_{\substack{r_3=|r_1-r_2|\\r_3\in|r_1-r_2|+2\Z}}^{\min(r_1+r_2,-4+2q-r_1-r_2)}L_{-2+q}(\mu_{r_3,0}).
\end{equation*}

On the other hand, the \strat{} $L_\mathrm{Vir}(c_{q,3},0)$ with $3\nmid q$ has central charge $c=13-18/q-2q$ and the exactly $q-1$ irreducible $L_\mathrm{Vir}(c_{q,3},0)$-modules are given by $\smash{L_\mathrm{Vir}(c_{q,3},h_{r+1,1}^{q,3})}$ for $1\leq r+1\leq q-1$. Their lowest $L_0$-weights are
\begin{equation*}
h_{r+1,1}^{q,3}=\frac{(6-2q+3r)r}{4q},
\end{equation*}
and the fusion rules are
\begin{equation*}
L_\mathrm{Vir}(c_{q,3},h_{r_1+1,1}^{q,3})\boxtimes L_\mathrm{Vir}(c_{q,3},h_{r_2+1,1}^{q,3})\cong\bigoplus_{\substack{r_3=|r_1-r_2|\\r_3\in|r_1-r_2|+2\Z}}^{\min(r_1+r_2,-4+2q-r_1-r_2)}L_\mathrm{Vir}(c_{q,3},h_{r_3+1,1}^{q,3}).
\end{equation*}

\medskip

Recall from \autoref{cor:charid} and \autoref{cor:lowestcharid} that for $3\nmid q$ the vector-space isomorphisms $\varphi^\pm$ in \eqref{eq:phiplus} and \eqref{eq:phiminus} induce the character identities
\begin{equation*}
\ch^*_{L_{-2+q}(\mu_{r,0})}(q^{\mp1/2},q^3)=\ch^*_{L_\mathrm{Vir}(c_{q,3},h_{r+1,1}^{q,3})}(q)
\end{equation*}
for $r\in\Z$ with $0\leq r\leq q-2$. This defines a bijection between the irreducible modules on both sides, which on the level of indices is given by
\begin{equation*}
(q,1)\mapsto(q,3),\quad(r,0)\mapsto(r+1,1).
\end{equation*}

Comparing the fusion rules, we observe:
\begin{prop}\label{prop:fusionrings}
Let $q\in\Z$ with $q\geq 2$ and $3\nmid q$. Then the correspondence $L_{-2+q}(\mu_{r,0})\mapsto L_\mathrm{Vir}(c_{q,3},h_{r+1,1}^{q,3})$ for $r\in\Z$ with $0\leq r\leq q-2$, stemming from the vector-space isomorphisms $\varphi^{\pm}$ for Verma modules, defines an isomorphism of fusion rings of the modular tensor categories $\Rep(L_{-2+q}(\sl_2))$ and $\Rep(L_\mathrm{Vir}(c_{q,3},0))$.
\end{prop}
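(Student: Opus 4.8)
The plan is to verify directly that the bijection of simple objects supplied by the character identities extends to a ring isomorphism, by comparing the two explicit fusion-rule formulas recalled just above the statement. First I would record that both $\Rep(L_{-2+q}(\sl_2))$ and $\Rep(L_\mathrm{Vir}(c_{q,3},0))$ are modular tensor categories, each with exactly $q-1$ simple objects; hence their fusion rings are free $\Z$-modules of rank $q-1$ with distinguished bases given by the classes of the simple modules. The character identities in \autoref{cor:charid} and \autoref{cor:lowestcharid} furnish a bijection $F$ of these bases, namely $F\colon L_{-2+q}(\mu_{r,0})\mapsto L_\mathrm{Vir}(c_{q,3},h_{r+1,1}^{q,3})$ for $0\le r\le q-2$, i.e.\ the index shift $r\mapsto r+1$. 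I would extend $F$ $\Z$-linearly to the underlying free $\Z$-modules; since it is a bijection of bases, this extension is automatically a $\Z$-module isomorphism.

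Next, to promote $F$ to a ring isomorphism it suffices to check that it preserves the multiplicative unit and matches the structure constants on the bases. The unit of each fusion ring is the class of the vacuum module: on the affine side this is $L_{-2+q}(\mu_{0,0})=L_{-2+q}(\sl_2)$, the $r=0$ object, and on the Virasoro side it is $L_\mathrm{Vir}(c_{q,3},h_{1,1}^{q,3})=L_\mathrm{Vir}(c_{q,3},0)$. By construction $F$ sends the former to the latter, so $F(\mathbf 1)=\mathbf 1$.

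Finally, I would compare the two fusion-rule formulas term by term. Written in terms of the $r$-labels, both read as a direct sum over $r_3\in|r_1-r_2|+2\Z$ ranging from $|r_1-r_2|$ to $\min(r_1+r_2,\,-4+2q-r_1-r_2)$, so the fusion coefficients $N_{r_1 r_2}^{r_3}$ are literally identical on the two sides; indeed each fusion ring is nothing but the Verlinde fusion ring of type $A_1$ at level $q-2$. Consequently $F(X\boxtimes Y)=F(X)\boxtimes F(Y)$ for all simple objects $X,Y$, and by $\Z$-bilinearity this extends to the whole fusion ring, proving that $F$ is an isomorphism of fusion rings.

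There is no serious computational obstacle here, since the content reduces to the coincidence of two explicit formulas. The only points requiring (minimal) care are to confirm that the bijection produced by the character identity is exactly the index shift $r\mapsto r+1$ and that it carries the vacuum to the vacuum, both of which are immediate from \autoref{cor:charid} and \autoref{cor:lowestcharid}. Conceptually, the statement amounts to the observation that these two modular tensor categories, although in general inequivalent even as fusion categories, nevertheless share a common fusion ring, namely the $A_1$ Verlinde algebra at level $q-2$.
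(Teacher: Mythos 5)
Your proposal is correct and follows exactly the paper's argument: the proposition is proved there by directly comparing the two explicit fusion-rule formulas recalled immediately before the statement, which have literally identical structure constants under the relabelling $r\mapsto r+1$ (both being the $A_1$ Verlinde ring at level $q-2$). Your additional checks (unit to unit, $\Z$-linear extension of the basis bijection) are the routine details the paper leaves implicit.
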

For $q=2$, both \voa{}s are the trivial \voa{} $\C\vac$ of central charge~$0$ and the modular tensor categories are just $\operatorname{Vect}$.

For $q>2$, it is apparent that $\Rep(L_{-2+q}(\sl_2))$ and $\Rep(L_\mathrm{Vir}(c_{q,3},0))$ are inequivalent ribbon fusion categories. Indeed, the ribbon twists are different:
\begin{equation*}
h_{r+1,1}^{q,3}-\ell_{r,0}^{q,1}=\frac{(2-q+r)r}{2q}
\end{equation*}
vanishes if and only if $r=0$ or $r=q-2$, i.e.\ for the extreme values of $r$.

$\Rep(L_{-2+q}(\sl_2))$ and $\Rep(L_\mathrm{Vir}(c_{q,3},0))$ are most likely not even equivalent as fusion categories (i.e.\ ignoring the ribbon structure). Indeed, while $\Rep(L_{-2+q}(\sl_2))$ is always pseudounitary, $\Rep(L_\mathrm{Vir}(c_{q,3},0))$ is probably not (except for $q=2,4$) as these are minimal models away from the discrete series.

For $q=4$, both $\Rep(L_2(\sl_2))$ and $\Rep(L_\mathrm{Vir}(c_{4,3},0))$ are pseudounitary fusion categories. They have the well-known Ising fusion rules. It follows from \cite{TY98} and Proposition~8.32 in \cite{ENO05} that there are exactly two nonequivalent fusion categories with Ising fusion rules (and both of them are pseudounitary). They can be distinguished by the Frobenius-Schur indicator of the simple object with dimension $\sqrt{2}$. One can show that the value of the Frobenius-Schur indicator for $\Rep(L_2(\sl_2))$ is $-1$, while for $\Rep(L_\mathrm{Vir}(c_{4,3},0))$ it is $+1$. We shall discuss the case $q=4$ further in \autoref{sec:freefermion} below.

\medskip

We want to make more precise the relation between the representation categories of the \strat{} \voa{}s $L_{-2+q}(\sl_2)$ and $L_\mathrm{Vir}(c_{q,3},0)$.

We briefly recall the notion of \emph{Galois symmetry} \cite{BG91,CG94} (see also \cite{RSW09,DLN15}). Given a modular tensor category, one can show that certain categorical data (in particular, the fusion matrices and the $S$-matrix), if normalised appropriately, have entries in a Galois extension $K$ of $\Q$, with abelian Galois group $\Aut(K/\Q)$. Roughly speaking, by acting on these data with an element of the Galois group, we obtain a new, possibly different, modular tensor category with the same fusion rules. For example, the Fibonacci and Yang-Lee modular tensor categories $\Rep(L_1(G_2))$ and $\Rep(L_\mathrm{Vir}(c_{2,5},0))$, respectively, are Galois conjugates. In particular, Galois symmetry may relate pseudounitary and nonpseudounitary modular tensor categories.

Most likely, the following is true:
\begin{conj}\label{conj:galois}
For $q\in\Z$ with $q\geq 2$ and $3\nmid q$, the modular tensor categories $\Rep(L_{-2+q}(\sl_2))$ and $\Rep(L_\mathrm{Vir}(c_{q,3},0))$ are Galois conjugates.
\end{conj}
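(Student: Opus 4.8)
The plan is to reduce the assertion to an explicit comparison of modular data and then to exhibit the Galois automorphism realizing the conjugacy. By \autoref{prop:fusionrings} the two categories already have identical fusion rings under the relabelling $r\mapsto r+1$, so it remains to match their $S$- and $T$-matrices up to a Galois action. On the affine side, $\Rep(L_{-2+q}(\sl_2))$ is the Wess--Zumino--Witten category of $\widehat{\sl}_2$ at level $q-2$, with normalized Kac--Peterson matrix $S^{\mathrm{aff}}_{r,r'}=\sqrt{2/q}\,\sin(\pi(r+1)(r'+1)/q)$ and twists $\theta_r=\e^{2\pi\i\,\ell_{r,0}^{q,1}}$. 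On the Virasoro side, restricting the Feigin--Fuchs labels of the minimal model $L_\mathrm{Vir}(c_{q,3},0)$ to the row $s=1$ (which, by the symmetry $(r,s)\sim(q-r,3-s)$, already exhausts all $q-1$ simple objects) gives $S^{\mathrm{Vir}}_{r,r'}=\pm\sqrt{2/q}\,(-1)^{r+r'}\sin(3\pi(r+1)(r'+1)/q)$ with twists $\theta'_r=\e^{2\pi\i\,h_{r+1,1}^{q,3}}$.

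The key point is the choice of automorphism. Since $3\nmid q$, multiplication by $3$ is invertible modulo $q$, so there is a Galois element $\sigma\in\Aut(K/\Q)$ of the relevant cyclotomic field $K$ acting on $\xi=\e^{\i\pi/q}$ by $\xi\mapsto\xi^3$. This sends the sine kernel $\sin(\pi(r+1)(r'+1)/q)$ of $S^{\mathrm{aff}}$ to the kernel $\sin(3\pi(r+1)(r'+1)/q)$ of $S^{\mathrm{Vir}}$, so that $\sigma(S^{\mathrm{aff}})$ and $S^{\mathrm{Vir}}$ agree up to the object-dependent signs $\pm(-1)^{r+r'}$ and the Galois action on the normalization; these are precisely the sign function $\eps_\sigma$ and the permutation $\hat\sigma$ of simple objects that accompany Galois conjugation of modular data \cite{BG91,CG94}. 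For the twists, a direct computation gives the clean identity $h_{r+1,1}^{q,3}=3\,\ell_{r,0}^{q,1}-r/2$, whence $\theta'_r=(-1)^r\theta_r^3=(-1)^r\sigma(\theta_r)$, exhibiting the same $\times3$ action decorated by a sign. The first step of the proof is therefore to organize these signs together with $\hat\sigma$ into a consistent Galois datum, confirming that $(S^{\mathrm{Vir}},T^{\mathrm{Vir}})$ is the $\sigma$-conjugate of $(S^{\mathrm{aff}},T^{\mathrm{aff}})$ at the level of modular data.

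The main obstacle is to upgrade this equality of modular data to an equivalence of the Galois-conjugate \emph{categories}, since modular data does not in general determine a modular tensor category. For odd $q$ this gap is closed by \cite{Gan03}, which is exactly the range in which the assertion is already known. For even $q$ one must either invoke a reconstruction or rigidity result specific to these ($A_1$-type) fusion rules, or construct the conjugate braided structure directly, for instance by realizing $\Rep(L_\mathrm{Vir}(c_{q,3},0))$ as a category of modules and transporting the associativity and braiding data along $\sigma$ (here one would expect the $\Z/2$ simple current $L_{-2+q}(\mu_{q-2,0})$ to intervene in reconciling the object-dependent signs on $S^{\mathrm{Vir}}$ with the permutation $\hat\sigma$). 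A complementary and perhaps more robust route is to prove the stronger \autoref{conj:Hecke}: a Hecke relation between the vector-valued characters in the sense of \cite{HHW20,Wu20,DLS22} forces the associated $\SLZ$-representations to be Galois conjugate and would yield the present statement as a corollary. Establishing either the category-level lifting for even $q$ or the Hecke relation in full generality is where the principal difficulty lies, and we leave it to future work.
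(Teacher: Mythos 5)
This statement is a conjecture in the paper, so there is no proof of it to compare against: the paper's only supporting evidence is the remark that for odd $q$ the assertion follows from Section~6 of \cite{Gan03} (where $\Rep(L_{-2+q}(\sl_2))$ is called a unitarisation of $\Rep(L_\mathrm{Vir}(c_{q,3},0))$), together with the expectation that the general case ``should not be difficult.'' Measured against that, your proposal is a reasonable and more explicit strategy rather than a proof, and you are candid about this. Your computations at the level of modular data check out: the row $s=1$ of the $(q,3)$-Kac table does exhaust all $q-1$ simples under $(r,s)\sim(q-r,3-s)$; the specialisation of the minimal-model $S$-matrix at $s=s'=1$ does produce the kernel $\sin(3\pi(r+1)(r'+1)/q)$ with overall normalisation $\sqrt{2/q}$ (using $\sin(\pi q/3)=\pm\sqrt{3}/2$ for $3\nmid q$); and the twist identity $h_{r+1,1}^{q,3}=3\,\ell_{r,0}^{q,1}-r/2$ is correct, since $3\,\frac{r(r+2)}{4q}-\frac{r}{2}=\frac{r(3r+6-2q)}{4q}$.

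Two genuine gaps remain, and you have named the larger one yourself. First, a technical point you gesture at but do not resolve: because $\theta'_r=(-1)^r\theta_r^3$, the automorphism $\sigma\colon\xi\mapsto\xi^3$ does \emph{not} literally conjugate the twists, so the ``consistent Galois datum'' step is not bookkeeping only. One must either replace $\sigma_3$ by $\sigma_t$ for a suitable $t\equiv3\pmod{2q}$ with $t$ chosen so that $t\,r(r+2)\equiv 3r(r+2)+2qr\pmod{4q}$ for all $r$ (a parity condition on $(t-3)/2q$, solvable here but requiring the conductor to be tracked carefully), or genuinely invoke the simple current $L_{-2+q}(\mu_{q-2,0})$ to absorb the signs; as written, $(S^{\mathrm{Vir}},T^{\mathrm{Vir}})$ is not yet exhibited as the $\sigma$-image of $(S^{\mathrm{aff}},T^{\mathrm{aff}})$ for any single $\sigma$. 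Second, and decisively, the passage from matched modular data to an equivalence with the Galois-conjugate \emph{category} is exactly the open content of the conjecture for even $q$: modular data does not determine a modular tensor category in general, and neither your reconstruction route nor the reduction to Conjecture~\ref{conj:Hecke} is carried out (the latter is itself conjectural in the paper, known only for $q=2,4,5,7,8$). So the proposal correctly reduces the problem and verifies the numerical compatibilities the paper leaves implicit --- which is genuinely more than the paper records --- but it does not close the conjecture, and the point at which it stops is the same point at which the paper stops.
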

The assertion is true when $q$ is odd, as noticed in Section~6 of \cite{Gan03}. There, the former category is called a unitarisation of the latter. It should not be difficult to verify the assertion in general.

\medskip

We suspect that there is an even deeper relation between the representation theories of $L_{-2+q}(\sl_2)$ and $L_\mathrm{Vir}(c_{q,3},0)$. We thank Jeffrey Harvey and Kaiwen Sun for related discussions.
\begin{rem}\label{rem:Hecke}
Let $q\in\Z$ with $q\geq 2$ and $3\nmid q$. When $q\leq 8$, the vector-valued characters of $L_{-2+q}(\sl_2)$ and $L_\mathrm{Vir}(c_{q,3},0)$ are related by a certain Hecke operator $T_3$ if $3$ divides the conductor of $L_\mathrm{Vir}(c_{q,3},0)$, and by another Hecke operator $T_p$ or a generalised Hecke operator $T_3$ otherwise. This is trivially true for $q=2$, shown in \cite{DLS22} for $q=4$ (using a generalised Hecke operator $T_3$), in \cite{HHW20,Wu20} for $q=5$, in \cite{Wu20} for $q=7$ (but using $T_{59}$) and in \cite{Wu20,DLS22} for $q=8$.
\end{rem}

It would be interesting to investigate this phenomenon further, which we leave to future work. Moreover, one should explain the precise relation to the character identities in \autoref{cor:charid} and \autoref{cor:lowestcharid}, which stem from the vector-space isomorphisms $\varphi^{\pm}$ in \eqref{eq:phiplus} and \eqref{eq:phiminus} on the level of Verma modules.

\medskip

We finish this section with some remarks on the role of the conductor~$N$ and the prime~$p$ in the Hecke operator $T_p$; see \cite{Gan03,HHW20,DLS22} for details. First recall that the central charge of $L_{-2+q}(\sl_2)$ is
\begin{equation*}
c=3(q-2)/q.
\end{equation*}
On the other hand, the \emph{effective} central charge of $L_\mathrm{Vir}(c_{q,3},0)$ is
\begin{equation*}
c_{q,3}^\mathrm{eff}=c_{q,3}-24h_\mathrm{min}=(q-2)/q,
\end{equation*}
which is exactly $1/3$ of the central charge of $L_{-2+q}(\sl_2)$. This suggests that these two theories could be connected by a Hecke operator $T_3$, at least as long as $3$ does not divide the conductor $N$ of $L_\mathrm{Vir}(c_{q,3},0)$.

In general, the conductor of a \strat{} \voa{} is the order of the $T$-matrix \cite{Ban03,DLN15}. For the simple Virasoro \voa{}s $L_\mathrm{Vir}(c_{q,p},0)$, the conductor $N$ is hence the smallest $N\in\Ns$ such that
\begin{equation*}
N\Bigl(h_{r,s}^{q,p}-\frac{c_{q,p}}{24}\Bigr)=N\Bigl(\frac{(qs-pr)^2}{4pq}-\frac{1}{24}\Bigr)\in\Z
\end{equation*}
for all $r,s\in\Z$ with $1\leq r\leq q-1$ and $1\leq s\leq p-1$.

For the $(q,3)$-minimal models, one can show that $3\mid N$ for $q\equiv 1\pmod{3}$ and $3\nmid N$ for $q\equiv 2\pmod{3}$. So, in the latter case, the relation in \autoref{rem:Hecke} can hold using the Hecke operator $T_3$, while it needs to be modified in the former case.


\subsection{Special Case \texorpdfstring{$q=4$}{q=4}: Free Fermions}\label{sec:freefermion}

We consider the case $(q,p)=(4,1)$ in more detail. Our character identities in \autoref{cor:charid} and \autoref{cor:lowestcharid} relate the irreducible modules for $L_2(\sl_2)$, with central charge $c=3/2$, to the irreducible modules for $L_\mathrm{Vir}(c_{4,3},0)$ at central charge $c_{4,3}=1/2$ as follows:
\begin{alignat*}{3}
L_2(\mu_{0,0}),&\;\ell_{0,0}^{4,1}=0&&\;\mapsto\;&L_\mathrm{Vir}(c_{4,3},h_{1,1}^{4,3}),&\;h_{1,1}^{4,3}=0,\\
L_2(\mu_{1,0}),&\;\ell_{1,0}^{4,1}=3/16&&\;\mapsto\;&L_\mathrm{Vir}(c_{4,3},h_{2,1}^{4,3}),&\;h_{2,1}^{4,3}=1/16,\\
L_2(\mu_{2,0}),&\;\ell_{2,0}^{4,1}=1/2&&\;\mapsto\;&L_\mathrm{Vir}(c_{4,3},h_{3,1}^{4,3}),&\;h_{3,1}^{4,3}=1/2.
\end{alignat*}
Recall (see, e.g., \cite{Hoe95,HM23}) that the \svoa{}s of three and one free fermions (with the standard choices of conformal structures such that they have central charge $c=3/2$ and $c=1/2$) decompose as
\begin{align*}
F^{3/2}&=(F^{1/2})^{\otimes3}\cong L_2(\mu_{0,0})\oplus L_2(\mu_{2,0}),\\
F^{1/2}&\cong L_\mathrm{Vir}(c_{4,3},h_{1,1}^{4,3})\oplus L_\mathrm{Vir}(c_{4,3},h_{3,1}^{4,3}),
\end{align*}
respectively, into the even and odd part. Hence, we obtain a character correspondence between $\smash{F^{3/2}}$ and $\smash{F^{1/2}}$:
\begin{prop}\label{prop:charidFF}
The vector-space isomorphisms $\varphi^\pm$ in \eqref{eq:phiplus} and \eqref{eq:phiminus} on the level of Verma modules induce the (super)character identities
\begin{equation*}
\operatorname{(s)ch}^*_{F^{3/2}}(q^{\mp1/2},q^3)=\operatorname{(s)ch}^*_{F^{1/2}}(q),
\end{equation*}
which restrict to the character identities in \autoref{cor:charid} and \autoref{cor:lowestcharid} for $(q,p)=(4,1)$, respectively.
\end{prop}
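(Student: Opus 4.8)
The plan is to obtain both identities as an immediate bookkeeping consequence of the module-level identities already established in \autoref{cor:charid} and \autoref{cor:lowestcharid}, together with the elementary fact that the character (resp.\ supercharacter) of a \svoa{} is the sum (resp.\ signed sum) of the characters of its even and odd parts.

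First I would record the two decompositions stated just above, now read off by parity. As an $L_2(\sl_2)$-module, $F^{3/2}$ has even part $L_2(\mu_{0,0})$ (lowest $L_0$-weight $0$) and odd part $L_2(\mu_{2,0})$ (lowest $L_0$-weight $1/2$); as an $L_\mathrm{Vir}(c_{4,3},0)$-module, $F^{1/2}$ has even part $L_\mathrm{Vir}(c_{4,3},h_{1,1}^{4,3})$ and odd part $L_\mathrm{Vir}(c_{4,3},h_{3,1}^{4,3})$. The key point is that these four modules are exactly the two $(q,p)=(4,1)$ pairs matched by the correspondence $(r,s)\mapsto(r+1,3s+1)$, i.e.\ $(0,0)\mapsto(1,1)$ and $(2,0)\mapsto(3,1)$. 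Hence \autoref{cor:charid} and \autoref{cor:lowestcharid} supply
\begin{align*}
\ch^*_{L_2(\mu_{0,0})}(q^{\mp1/2},q^3)&=\ch^*_{L_\mathrm{Vir}(c_{4,3},h_{1,1}^{4,3})}(q),\\
\ch^*_{L_2(\mu_{2,0})}(q^{\mp1/2},q^3)&=\ch^*_{L_\mathrm{Vir}(c_{4,3},h_{3,1}^{4,3})}(q).
\end{align*}
Here the two choices of sign $\mp$ correspond to using $\varphi^+$ (highest weight, $w\mapsto q^{-1/2}$) or $\varphi^-$ (lowest weight, $w\mapsto q^{1/2}$); these give the same specialisation because of the symmetry $\ch^*_{L_2(\mu_{r,0})}(w,q)=\ch^*_{L_2(\mu_{r,0})}(w^{-1},q)$ noted after \autoref{cor:lowestcharid}.

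For the character identity I would then write $\ch^*_{F^{3/2}}(w,q)=\ch^*_{L_2(\mu_{0,0})}(w,q)+\ch^*_{L_2(\mu_{2,0})}(w,q)$, apply $(w,q)\mapsto(q^{\mp1/2},q^3)$, substitute the two identities above, and recognise the result as $\ch^*_{L_\mathrm{Vir}(c_{4,3},h_{1,1}^{4,3})}(q)+\ch^*_{L_\mathrm{Vir}(c_{4,3},h_{3,1}^{4,3})}(q)=\ch^*_{F^{1/2}}(q)$. The supercharacter identity is the identical computation with a relative minus sign between the even and odd summands on each side; since the correspondence matches even part to even part and odd part to odd part, the signs line up and yield $\sch^*_{F^{3/2}}(q^{\mp1/2},q^3)=\sch^*_{F^{1/2}}(q)$. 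The closing claim that these identities ``restrict to'' those of \autoref{cor:charid} and \autoref{cor:lowestcharid} is just the inverse of this step: projecting onto the two parity eigenspaces separates the (signed) sum back into its summands.

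The main obstacle is not computational but a matter of fixing conventions: one must check that (i) the $\Z/2$-parity grading of each free-fermion \svoa{} agrees with the stated even/odd module decomposition, so that even is matched to even and odd to odd (this is what makes the \emph{super}character signs cancel correctly), and (ii) the $h_0$-weight grading entering the Jacobi character $\ch^*_{F^{3/2}}(w,q)$ is the one induced by the $\sl_2$-triple $\{\Psi^{(0)},\Psi^+,\Psi^-\}$ spanning the weight-$1/2$ subspace, so that the per-module Jacobi characters really do sum to that of $F^{3/2}$. Once these are pinned down, the proposition follows by adding (for $\ch$) or subtracting (for $\sch$) the two already-proven identities.
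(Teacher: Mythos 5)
Your proposal is correct and matches the paper's (implicit) argument exactly: the paper derives \autoref{prop:charidFF} immediately from the parity decompositions $F^{3/2}\cong L_2(\mu_{0,0})\oplus L_2(\mu_{2,0})$ and $F^{1/2}\cong L_\mathrm{Vir}(c_{4,3},h_{1,1}^{4,3})\oplus L_\mathrm{Vir}(c_{4,3},h_{3,1}^{4,3})$ by adding (for $\ch$) or subtracting (for $\sch$) the two $(q,p)=(4,1)$ instances of \autoref{cor:charid} and \autoref{cor:lowestcharid}, with even matched to even and odd to odd, just as you do. One trivial slip in your closing caveat: the $h_0$-grading on $F^{3/2}$ is extended from the $\sl_2$-triple $\{e,f,h\}$ of bilinears inside the even part $(F^{3/2})^{\bar0}=L_2(\sl_2)$ (giving $\Psi^\pm$ weight $\pm2$ and $\Phi^{(0)}$ weight $0$), not from the odd generators themselves, but this does not affect your argument.
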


The proposition is equivalent to the existence (nonuniquely) of graded vector-space isomorphisms $\smash{\chi^\pm\colon F^{3/2}\overset{\sim}{\longrightarrow}F^{1/2}}$ that map a homogeneous vector of $h_0$-weight $f$ and $L_0$-weight $n$ to one of $L_0$-weight $n'=\mp f/2+3n$. They restrict to the vector-space isomorphisms $\chi^\pm$ in \eqref{eq:chiplusminus}. In the following, we construct such isomorphisms explicitly; cf.\ \autoref{rem:inducedGVSI}.

\medskip

The free-fermion \svoa{} $F^{1/2}$ is strongly generated by one odd field $\Phi(z)=Y(\Phi,z)=\sum_{i\in\Z}\Phi_iz^{-i-1}$ whose modes satisfy the anti-commutation relations
\begin{equation*}
[\Phi_i,\Phi_j]=\delta_{i+j,-1}
\end{equation*}
for $i,j\in\Z$. As a vector space, $F^{1/2}$ is the exterior algebra
\begin{equation*}
F^{1/2}\cong\Lambda(\{\Phi_{-i}\mid i\in\Ns\}).
\end{equation*}

The \svoa{} $F^{3/2}$ of three free fermions is $(F^{1/2})^{\otimes3}$. Hence, $F^{3/2}$ is strongly generated by the odd fields $\smash{\Phi^{(r)}(z)=Y(\smash{\Phi^{(r)}},z)=\sum_{i\in\Z}\Phi^{(r)}_iz^{-i-1}}$ for $r=0,1,2$ whose modes satisfy the anti-commutation relations
\begin{equation*}
[\Phi^{(r)}_i,\Phi^{(s)}_j]=\delta_{r,s}\delta_{i+j,-1}
\end{equation*}
for $r,s=0,1,2$ and $i,j\in\Z$. As a vector space, $F^{3/2}$ is the exterior algebra
\begin{equation*}
F^{3/2}\cong\Lambda(\{\Phi^{(0)}_{-i},\Phi^{(1)}_{-i},\Phi^{(2)}_{-i}\mid i\in\Ns\}).
\end{equation*}
It is convenient to choose a slightly different basis, namely
\begin{equation*}
\Psi^+\coloneqq(\Phi^{(1)}+i\Phi^{(2)})/\sqrt{2},\quad
\Psi^-\coloneqq(\Phi^{(1)}-i\Phi^{(2)})/\sqrt{2},\quad\Phi^{(0)}\coloneqq\Phi^{(0)}.
\end{equation*}
Then these odd fields satisfy the anti-commutation relations
\begin{equation*}
[\Psi^+_i,\Psi^-_j]=\delta_{i+j,-1}\quad\text{and}\quad[\Phi^{(0)}_i,\Phi^{(0)}_j]=\delta_{i+j,-1}
\end{equation*}
for all $i,j\in\Z$ and all other anti-commutators are zero. As a vector space, $F^{3/2}$ is then the exterior algebra
\begin{equation*}
F^{3/2}\cong\Lambda(\{\Psi^+_{-i},\Psi^-_{-i},\Phi^{(0)}_{-i}\mid i\in\Ns\}).
\end{equation*}
In order to fix the $\sl_2$-weight grading, we need to explicitly realise $L_2(\sl_2)\cong(F^{3/2})^{\bar0}$ inside $F^{3/2}$. The even subalgebra $(F^{3/2})^{\bar0}$ is strongly generated by, e.g., the fields
\begin{equation}\label{eq:sl2triple}
\begin{split}
e(z)&=\i\sqrt{2}{:}\Phi^{(0)}(z)\Psi^+(z){:},\\
f(z)&=\i\sqrt{2}{:}\Phi^{(0)}(z)\Psi^-(z){:},\\
h(z)&=2{:}\Psi^+(z)\Psi^-(z){:},
\end{split}
\end{equation}
which satisfy the standard operator product expansions for the $\sl_2$-triple $\{e,f,h\}$ at level~$2$. One can show that they generate the simple quotient $L_2(\sl_2)$. Relevant for us is in the following that, if we extend the action of $h_0$ from $L_2(\sl_2)$ to all of $F^{3/2}$, then $\Psi^\pm$ has $h_0$-weight $\pm2$ and $\Phi^{(0)}$ has weight~$0$.

Similarly to \autoref{sec:universal}, we now define the graded vector-space isomorphisms
\begin{equation}\label{eq:chifermion}
\chi^\pm\colon F^{3/2}\overset{\sim}{\longrightarrow}F^{1/2}
\end{equation}
by mapping
\begin{equation*}
\Psi^+_{-i}\mapsto\Phi_{-3i+1\pm1},\quad\Psi^-_{-i}\mapsto\Phi_{-3i+1\mp1},\quad\Phi^{(0)}_{-i}\mapsto\Phi_{-3i+1}
\end{equation*}
for $i\in\Ns$ and $\vac\mapsto\vac$. Then, indeed, a vector of $(h_0,L_0)$-weight $(f,n)$ is mapped to a vector of $L_0$-weight $n'=\mp f/2+3n$.

We can strengthen the above statement as follows, noting that the maps $\chi^\pm$ equip $F^{1/2}$ with the structure of a module for $F^{3/2}$ (cf.\ \cite{AW23}):
\begin{prop}\label{prop:FFcorrespondence}
The \svoa{} $F^{1/2}$ has the structure of an irreducible $F^{3/2}$-module via the module map $\smash{Y_{F^{1/2}}(\cdot,z)\colon F^{3/2}\to\End(F^{1/2})[[z^{\pm1}]]}$,
\begin{align*}
Y_{F^{1/2}}(\Psi^+,z)&=\sum_{i\in\Z}\Phi_{3i+1\pm1}z^{-i-1},\\
Y_{F^{1/2}}(\Psi^-,z)&=\sum_{i\in\Z}\Phi_{3i+1\mp1}z^{-i-1},\\
Y_{F^{1/2}}(\Phi^{(0)},z)&=\sum_{i\in\Z}\Phi_{3i+1}z^{-i-1},
\end{align*}
and the map $\smash{\chi^\pm\colon F^{3/2}\overset{\sim}{\longrightarrow}F^{1/2}}$ in \eqref{eq:chifermion} is an isomorphism of $\smash{F^{3/2}}$-modules.

Conversely, the \svoa{} $F^{3/2}$ is an irreducible $F^{1/2}$-module via the module vertex operator $\smash{Y_{F^{3/2}}(\cdot,z)\colon F^{1/2}\to\End(F^{3/2})[[z^{\pm1}]]}$,
\begin{equation*}
Y_{F^{3/2}}(\Phi,z)=\sum_{i\in\Z}\bigl(\Psi^+_iz^{-3i-2\mp1}+\Psi^-_iz^{-3i-2\pm1}+\Phi^{(0)}_iz^{-3i-2}\bigr),
\end{equation*}
and the map $\smash{(\chi^\pm)^{-1}\colon F^{1/2}\overset{\sim}{\longrightarrow}F^{3/2}}$ in \eqref{eq:chifermion} is an isomorphism of $\smash{F^{1/2}}$-modules.
\end{prop}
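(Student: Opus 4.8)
The plan is to reduce both assertions to the defining (super)bracket relations of the free-fermion mode algebras, using that $F^{1/2}$ and $F^{3/2}$ are free-field vertex superalgebras whose modules are precisely the Fock representations of the corresponding Clifford algebras of modes. The only real content is then an elementary bookkeeping observation, already implicit in the definition of $\chi^\pm$: the three families of generating modes of $F^{3/2}$ correspond exactly to the three residue classes of the single $\Phi$-mode algebra of $F^{1/2}$ modulo $3$.

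For the forward direction I would read off the modes of the proposed module fields: writing $Y_{F^{1/2}}(\Phi^{(0)},z)=\sum_{i\in\Z}\Phi_{3i+1}z^{-i-1}$ and likewise for $Y_{F^{1/2}}(\Psi^\pm,z)$, the $i$-th modes are $\Phi_{3i+1}$, $\Phi_{3i+1\pm1}$ and $\Phi_{3i+1\mp1}$, so that the modes $\Phi_m$ of the single fermion get distributed over the classes $m\equiv0,1,2\pmod3$. The key computation is that these satisfy the Clifford relations of $F^{3/2}$; for instance $[\Phi_{3i+1},\Phi_{3j+1}]=\delta_{3(i+j)+2,-1}=\delta_{i+j,-1}$ reproduces $[\Phi^{(0)}_i,\Phi^{(0)}_j]=\delta_{i+j,-1}$ and $[\Phi_{3i+1\pm1},\Phi_{3j+1\mp1}]=\delta_{3(i+j)+2,-1}=\delta_{i+j,-1}$ reproduces $[\Psi^+_i,\Psi^-_j]=\delta_{i+j,-1}$, while every mixed bracket lands in a residue class that can never equal $-1\pmod3$ and hence vanishes, and $[\Phi_{3i+2},\Phi_{3j+2}]$, $[\Phi_{3i},\Phi_{3j}]$ vanish for the same reason. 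Thus the three module fields are mutually superlocal with exactly the singular parts occurring in $F^{3/2}$. Transporting the (lower-bounded) grading of $F^{3/2}$ along the purely linear map $\chi^\pm$ endows $F^{1/2}$ with a lower-bounded grading $n'=\mp f/2+3n$ for which the nonnegative modes $\Phi_m$ ($m\geq0$) annihilate $\vac$ and the fields are lower-truncated, so the existence theorem for modules over a strongly generated vertex superalgebra (the module analogue of the reconstruction theorem) extends these generating fields uniquely to a module vertex operator $Y_{F^{1/2}}(\cdot,z)$ on all of $F^{3/2}$. Irreducibility is immediate: the negative module-modes run through $\Phi_{-1},\Phi_{-2},\Phi_{-3},\dots$, i.e.\ all creation operators $\Phi_{-m}$ ($m\in\Ns$), so $F^{1/2}=\Lambda(\{\Phi_{-i}\})$ is cyclically generated from $\vac$ and is the standard irreducible Fock module.

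To identify $\chi^\pm$ as an isomorphism of $F^{3/2}$-modules I would observe that the assignment $\Psi^+_{-i}\mapsto\Phi_{-3i+1\pm1}$, $\Psi^-_{-i}\mapsto\Phi_{-3i+1\mp1}$, $\Phi^{(0)}_{-i}\mapsto\Phi_{-3i+1}$ extends, by the same residue bookkeeping now for all $i\in\Z$, to an isomorphism of the two mode Clifford algebras under which the self-action of $F^{3/2}$ and the just-constructed module action on $F^{1/2}$ are identified. Both $F^{3/2}$ and $F^{1/2}$ are then irreducible Fock modules for the same Clifford algebra with cyclic vacuum annihilated by the annihilation operators, so there is a unique intertwiner sending $\vac\mapsto\vac$; by construction it acts on the exterior-algebra basis exactly as $\chi^\pm$ does, so $\chi^\pm$ is that intertwiner. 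Since the module structures are determined by the Clifford action (free field), $\chi^\pm$ is an isomorphism of $F^{3/2}$-modules.

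The converse is entirely symmetric: unpacking $Y_{F^{3/2}}(\Phi,z)=\sum_{i\in\Z}(\Psi^+_iz^{-3i-2\mp1}+\Psi^-_iz^{-3i-2\pm1}+\Phi^{(0)}_iz^{-3i-2})$ shows that the single module fermion reassembles the three families, with $i$-th modes $\Phi^M_{3i+1}=\Phi^{(0)}_i$, $\Phi^M_{3i+1\pm1}=\Psi^+_i$ and $\Phi^M_{3i+1\mp1}=\Psi^-_i$, and the identical mod-$3$ computation gives $[\Phi^M_m,\Phi^M_n]=\delta_{m+n,-1}$, so $Y_{F^{3/2}}(\Phi,z)$ is self-local with the free-fermion operator product expansion; the nonnegative modes annihilate $\vac$, the transported grading is bounded below, and reconstruction yields an $F^{1/2}$-module structure on $F^{3/2}$, irreducible because the negative modes produce all creators $\Psi^\pm_{-i},\Phi^{(0)}_{-i}$, and $(\chi^\pm)^{-1}$ is the unique vacuum-preserving intertwiner as before. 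The main obstacle here is not algebraic depth but care: one must track the two sign conventions $\pm$ consistently and, more importantly, recognise that the relevant grading is the rescaled module grading forced by $n'=\mp f/2+3n$ rather than any naive conformal weight. It is precisely this rescaling of the formal variable that prevents a direct comparison of conformal vectors and makes the free-field (Clifford) reconstruction the appropriate tool.
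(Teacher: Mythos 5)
Your proposal is correct and follows essentially the same route the paper takes (the paper states \autoref{prop:FFcorrespondence} without a separate proof, the justification being exactly the mod-$3$ redistribution of the fermion modes and the resulting Clifford relations $[\Phi_m,\Phi_n]=\delta_{m+n,-1}$, which your computation $3(i+j)+2=-1\iff i+j=-1$ and the vanishing of all mixed brackets verifies). Your additional scaffolding --- the correspondence between restricted representations of the mode Clifford algebra and modules over the free-field \svoa{}, irreducibility of the Fock modules over the full mode algebra, and Schur-type uniqueness of the vacuum-preserving intertwiner identifying it with $\chi^\pm$ --- is a sound and complete way to make that implicit argument rigorous.
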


Restricting the result to the even parts of $F^{1/2}$ and $F^{3/2}$ we obtain:
\begin{cor}\label{cor:dual}
The \voa{} $L_\mathrm{Vir}(c_{4,3},0)$ has the structure of an irreducible $L_2(\sl_2)$-module, which is obtained by setting
\begin{equation*}
Y_{L_\mathrm{Vir}(c_{4,3},0)}(u,z)=Y_{F^{1/2}}(u,z)\quad\text{for } u\in(F^{3/2})^{\bar0}=L_2(\sl_2).
\end{equation*}
There is an isomorphism of $L_2(\sl_2)$-modules $\smash{\chi^\pm\colon L_2(\sl_2)\overset{\sim}{\longrightarrow}L_\mathrm{Vir}(c_{4,3},0)}$, which is the even part of the isomorphism $\smash{\chi^\pm\colon F^{3/2}\overset{\sim}{\longrightarrow}F^{1/2}}$ in \eqref{eq:chifermion}.

Conversely, the \voa{} $L_2(\sl_2)$ has the structure of an irreducible $L_\mathrm{Vir}(c_{4,3},0)$-module, which is obtained by setting
\begin{equation*}
Y_{L_2(\sl_2)}(u,z)=Y_{F^{3/2}}(u,z)\quad\text{for } u\in(F^{1/2})^{\bar0}=L_\mathrm{Vir}(c_{4,3},0).
\end{equation*}
There is an isomorphism of $\smash{L_\mathrm{Vir}(c_{4,3},0)}$-modules $\smash{(\chi^\pm)^{-1}\colon L_\mathrm{Vir}(c_{4,3},0)\overset{\sim}{\longrightarrow}L_2(\sl_2)}$, which is the even part of the isomorphism $\smash{(\chi^\pm)^{-1}\colon F^{1/2}\overset{\sim}{\longrightarrow}F^{3/2}}$ in \eqref{eq:chifermion}.
\end{cor}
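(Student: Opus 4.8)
The plan is to derive this corollary from \autoref{prop:FFcorrespondence} purely by restricting to the even parts, so essentially no new computation is needed; the work lies entirely in checking compatibility with the $\Z/2$-grading.

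First I would record that the vector-space isomorphism $\chi^\pm$ in \eqref{eq:chifermion} is parity-preserving: by its very definition it sends a product of $k$ of the odd generators $\Psi^+_{-i},\Psi^-_{-i},\Phi^{(0)}_{-i}$ to a product of $k$ odd modes $\Phi_j$ applied to the vacuum, so it maps a vector of parity $k\bmod 2$ to a vector of parity $k\bmod 2$. Hence $\chi^\pm$ restricts to isomorphisms $(F^{3/2})^{\bar0}\to(F^{1/2})^{\bar0}$ and $(F^{3/2})^{\bar1}\to(F^{1/2})^{\bar1}$; under the identifications $(F^{3/2})^{\bar0}=L_2(\sl_2)$ and $(F^{1/2})^{\bar0}=L_\mathrm{Vir}(c_{4,3},0)$, the former is exactly the map $\chi^\pm\colon L_2(\sl_2)\overset{\sim}{\longrightarrow}L_\mathrm{Vir}(c_{4,3},0)$ in the statement.

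Next I would observe that the module vertex operators $Y_{F^{1/2}}(u,z)$ for $u\in(F^{3/2})^{\bar0}=L_2(\sl_2)$ act parity-preservingly on $F^{1/2}$, so they preserve the decomposition $F^{1/2}=(F^{1/2})^{\bar0}\oplus(F^{1/2})^{\bar1}$; restricting them to the even summand endows $L_\mathrm{Vir}(c_{4,3},0)$ with precisely the claimed $L_2(\sl_2)$-module structure. Taking the intertwining identity $\chi^\pm(Y_{F^{3/2}}(u,z)v)=Y_{F^{1/2}}(u,z)\chi^\pm(v)$ from \autoref{prop:FFcorrespondence} for $u,v\in L_2(\sl_2)$ then shows at once that $\chi^\pm|_{L_2(\sl_2)}$ intertwines the two $L_2(\sl_2)$-actions, and being bijective by the previous paragraph, it is an isomorphism of $L_2(\sl_2)$-modules.

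For irreducibility I would transport the known irreducibility across $\chi^\pm$ rather than argue directly. The source of $\chi^\pm|_{L_2(\sl_2)}$ is $L_2(\sl_2)$ acting on itself, i.e.\ its vacuum (adjoint) module, which is irreducible because $L_2(\sl_2)$ is a simple \voa{}; since $\chi^\pm$ is a module isomorphism, the target $L_\mathrm{Vir}(c_{4,3},0)$ is irreducible as well. The converse half of the corollary is entirely symmetric, applying the same restriction to $(\chi^\pm)^{-1}$ and $Y_{F^{3/2}}(\cdot,z)$. The only point demanding care is that irreducibility of an $F^{3/2}$-module does \emph{not} in general force irreducibility of its even part over $L_2(\sl_2)$; this potential gap is exactly what is sidestepped by transporting irreducibility through the explicit isomorphism $\chi^\pm$ instead of trying to establish it internally.
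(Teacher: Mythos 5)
Your proposal is correct and matches the paper's (implicit) argument: the corollary is stated there as an immediate consequence of \autoref{prop:FFcorrespondence} by restriction to even parts, which is precisely your route. Your two points of extra care — that $\chi^\pm$ preserves parity by construction, and that irreducibility is transported through the isomorphism from the adjoint module of the simple \voa{} $L_2(\sl_2)$ (resp.\ $L_\mathrm{Vir}(c_{4,3},0)$) rather than deduced from irreducibility of the ambient \svoa{}-module — are exactly the right way to fill in the paper's one-line derivation.
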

These results solve the problem of \autoref{rem:inducedGVSI} in the special case of $(q,p)=(4,1)$ and for the irreducible modules with $r=0,q-2$. The question is whether there are further admissible cases where vector-space isomorphisms $\chi^\pm$ can be defined.

\begin{rem}\label{rem:AWduality}
In \cite{AW23}, the authors introduced a new kind of duality between simple vertex (super)algebras $V_1$ and $V_2$. We say that $V_1$ and $V_2$ are \emph{dual} to each other if $V_2$ can be equipped with the structure of a $V_1$-module isomorphic to $V_1$ and vice versa. Moreover, we demand that $V_1$ can be realised as a subalgebra of the vertex algebra of local fields acting on $V_2$-modules and vice versa.

The example studied in \cite{AW23} is the duality between the vertex superalgebra $\smash{\mathcal{V}^{(2)}}$ (see also \autoref{sec:gvsi}) and the affine vertex superalgebra $\smash{L_{-h^{\vee}}(\mathfrak{osp}_{1,2})}$ at the critical level. \autoref{cor:dual} shows that $\smash{L_2(\sl_2)}$ and $\smash{L_\mathrm{Vir}(c_{4,3},0)}$ are dual in the same sense.
\end{rem}

We thank Ching Hung Lam for pointing the following out to us:
\begin{rem}\label{rem:twisted}
\autoref{prop:FFcorrespondence} and \autoref{cor:dual} are closely related to the Barron-Dong-Mason construction \cite{BDM02}, or more precisely to its super version \cite{Bar16}, which provides a construction of $g$-twisted modules for tensor-product vertex operator (super)algebras $V^{\otimes k}$ for an automorphism $g$ given by a permutation of the tensor factors. In particular, when $g=(12\cdots k)$ is a $k$-cycle, the category of $g$-twisted $V^{\otimes k}$-modules is equivalent to the category of untwisted $V$-modules.

Applying this to $V=F^{1/2}$ and $k=3$, one obtains a relation between untwisted $F^{1/2}$-modules and $g$-twisted $F^{3/2}$-modules, where $\smash{F^{3/2}\cong(F^{1/2})^{\otimes 3}}$. We note that \cite{Bar16} is applicable because $g=(123)$ has odd order. This permutation automorphism can also be written as
\begin{equation*}
g=\e^{2\pi\i H_0}\qquad\text{with}\qquad H=\frac{\i}{3\sqrt{3}}\bigl(\Phi^{(0)}_{-1}\Phi^{(1)}+\Phi^{(1)}_{-1}\Phi^{(2)}+\Phi^{(2)}_{-1}\Phi^{(0)}\bigr),
\end{equation*}
which means that it is inner. Hence, the category of $g$-twisted $F^{3/2}$-modules is also equivalent to the category of untwisted $F^{3/2}$-modules via Li's $\Delta$-operator construction \cite{Li96}.

Overall, we thus arrive at the same type of conclusion as \autoref{prop:FFcorrespondence}. However, our approach gives a direct correspondence between untwisted $F^{1/2}$- and $F^{3/2}$-modules, realised by an explicit re-indexing of fermionic modes modulo~$3$ rather than by combining two different constructions of twisted modules.

\smallskip

Since $g$ preserves the even subalgebra $\smash{(F^{3/2})^{\bar{0}}\cong L_2(\sl_2)}$, it also induces an automorphism $\smash{\bar{g}\coloneqq g|_{L_2(\sl_2)}}$ of $L_2(\sl_2)$. For instance, with the choice of affine generators $e,f,h$ in \eqref{eq:sl2triple}, this inner automorphism is given by
\begin{equation*}
\bar{g}=\e^{2\pi\i H_0}\qquad\text{with}\qquad H=\frac{\i}{6\sqrt{3}}\bigl((1+\i)e+(1-\i)f-h\bigr).
\end{equation*}
Thus, the permutation construction from \cite{BDM02,Bar16} restricted to the even subalgebras, relates $L_{\mathrm{Vir}}(c_{4,3},0)$-modules to $\bar{g}$-twisted $L_2(\sl_2)$-modules; and because $\bar{g}$ is an inner automorphism of $L_2(\sl_2)$, the category of $\bar{g}$-twisted $L_2(\sl_2)$-modules is equivalent to the category of untwisted $L_2(\sl_2)$-modules via Li's $\Delta$-operator construction \cite{Li96}. We then arrive at the same kind of statement as \autoref{cor:dual}.
\end{rem}

\smallskip

It may be possible to extend the above results to any integral level $k=-2+q$ with $q\geq4$ and $3\nmid q$ (and $p=1$). To this end, we restrict the correspondence in \autoref{sec:galois} between the irreducible modules for $L_{-2+q}(\sl_2)$ and those for $L_\mathrm{Vir}(c_{q,3},0)$ to the first and last module:
\begin{alignat*}{3}
L_2(\mu_{0,0}),&\;\ell_{0,0}^{q,1}=0&&\;\mapsto\; &L_\mathrm{Vir}(c_{q,3},h_{1,1}^{q,3}),&\;h_{1,1}^{q,3}=0,\\
L_2(\mu_{q-2,0}),&\;\ell_{q-2,0}^{q,1}=(q-2)/4&&\;\mapsto\; &L_\mathrm{Vir}(c_{q,3},h_{q-1,1}^{q,3}),&\;h_{q-1,1}^{q,3}=(q-2)/4.
\end{alignat*}
These modules are simple currents, which allows us to define the following simple generalised \voa{}s (\aia{}s, to be precise) as simple-current extensions
\begin{align*}
A^{(q)}&\coloneqq L_{-2+q}(\sl_2)\oplus L_{-2+q}(\mu_{q-2,0}),\\
B^{(q)}&\coloneqq L_\mathrm{Vir}(c_{q,3},0)\oplus L_\mathrm{Vir}(c_{q,3},h_{q-1,1}^{q,3}).
\end{align*}
For $q\equiv 0\pmod{4}$ these are \svoa{}s, and for $q\equiv 2\pmod{4}$ these are ordinary \voa{}s.

Now, according to the character identities in \autoref{cor:charid} and \autoref{cor:lowestcharid}, there are (not uniquely) graded vector-space isomorphisms $\chi^\pm\colon A^{(q)}\overset{\sim}{\longrightarrow}B^{(q)}$. The question is whether they can be written in a nice way, as in the case of $q=4$.
\begin{conj}
Assume that $q\geq2$ with $3\nmid q$. Then the \voa{}s $L_{-2+q}(\sl_2)$ and $L_\mathrm{Vir}(c_{q,3},0)$ are dual in the sense of \cite{AW23} (see \autoref{rem:AWduality}), and similarly the generalised \voa{}s $A^{(q)}$ and $B^{(q)}$. In particular, $L_{-2+q}(\sl_2)$ can be equipped with the structure of an irreducible $L_\mathrm{Vir}(c_{q,3},0)$-module isomorphic to $L_\mathrm{Vir}(c_{q,3},0)$ and vice versa.
\end{conj}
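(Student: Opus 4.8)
The plan is to generalise the free-fermion realisation of \autoref{sec:freefermion}, which settles the case $q=4$, to all $q\geq2$ with $3\nmid q$ by promoting the graded vector-space isomorphism $\chi^\pm$ to an isomorphism of modules. The essential feature of \autoref{prop:FFcorrespondence} was not the existence of $\chi^\pm$ per se---this holds for all $q$ by \autoref{cor:charid} and \autoref{cor:lowestcharid}---but rather that the strong generators of one \svoa{} act on the other through the mode-rescaling prescription \eqref{eq:intro2}, in which a mode index $m$ is replaced by $3m$ together with a shift by $0,\pm1$ according to whether the generator has $h_0$-weight $0$ or $\pm2$. I would therefore aim to transport the vertex-operator structure along this same prescription, so that $\chi^\pm$ and $(\chi^\pm)^{-1}$ become module isomorphisms realising $B^{(q)}$ as the adjoint $A^{(q)}$-module and $A^{(q)}$ as the adjoint $B^{(q)}$-module.

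First I would fix strong generating sets: the \aia{} $A^{(q)}$ is strongly generated by the affine currents $e,h,f$ at level $-2+q$ together with the top vectors of the simple-current module $L_{-2+q}(\mu_{q-2,0})$, while $B^{(q)}$ is strongly generated by the Virasoro field $T(z)=\sum_n L_n z^{-n-2}$ and the top vectors of $L_\mathrm{Vir}(c_{q,3},h_{q-1,1}^{q,3})$. Splitting the modes $L_n$ according to the residue of $n$ modulo $3$, I would define candidate fields $\tilde e,\tilde h,\tilde f$ on $B^{(q)}$ supported on the residue classes $\pm1,0,\mp1\pmod 3$, respectively, mirroring the assignment $e_{-i}\mapsto L_{-3i\pm1}$, $h_{-i}\mapsto L_{-3i}$, $f_{-i}\mapsto L_{-3i\mp1}$ underlying $\psi^\pm$, with the generators from the simple-current summand assigned by rescaling the $z$-variable by a factor of three as in the module vertex operator $Y_{F^{3/2}}(\Phi,z)$ of \autoref{prop:FFcorrespondence}. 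One then has to verify that these fields close into the level-$(q-2)$ affine relations together with the corresponding abelian-intertwining relations for the extension, i.e.\ that the Jacobi identity for the putative module vertex operator holds. Granting this, $\chi^\pm$ realises $B^{(q)}$ as the adjoint $A^{(q)}$-module, which is irreducible since $A^{(q)}$ is simple; the reverse direction, and hence the full duality in the sense of \autoref{rem:AWduality}, would follow by the evident symmetry of the construction, and restriction to the even parts would yield the statement for $L_{-2+q}(\sl_2)$ and $L_\mathrm{Vir}(c_{q,3},0)$ themselves.

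The hard part will be precisely the verification that the rescaled fields satisfy the affine OPEs. In the fermionic case $q=4$ this reduces to a finite computation with the anticommutators $[\Phi_i,\Phi_j]=\delta_{i+j,-1}$, because $e,h,f$ are fermion bilinears whose brackets are controlled entirely by the single free fermion; note in particular that the bare mode identification is \emph{not} itself a Lie-algebra homomorphism, since the Virasoro bracket $[L_m,L_n]=(m-n)L_{m+n}+\tfrac{c}{12}(m^3-m)\delta_{m+n,0}$ does not refactor directly into the $\sl_2$ relations, so the correct module operators must involve normally-ordered combinations rather than raw modes. For general $q$ I would seek a free-field (parafermionic, lattice, or $\beta\gamma$-system) realisation of both $L_{-2+q}(\sl_2)$ and $L_\mathrm{Vir}(c_{q,3},0)$ that is compatible with the mod-$3$ rescaling and in which this bilinear-type computation can be carried out uniformly; absent such a realisation, one would instead have to control the relations directly through the singular-vector data recorded in \autoref{sec:affine} and \autoref{sec:virasoro}. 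The expected Galois and Hecke relationship of \autoref{conj:galois} and \autoref{conj:Hecke} should serve as a consistency check and may indicate the correct normalisation of the dual generators.
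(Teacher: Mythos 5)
There is no proof in the paper to compare against: this statement is deliberately left as an open conjecture at the end of \autoref{sec:freefermion}, and what you have written is a research programme rather than a proof. You yourself locate the entire mathematical content in a step you then defer --- ``the hard part will be precisely the verification that the rescaled fields satisfy the affine OPEs'' --- and that verification \emph{is} the conjecture. Everything before it (the character identities of \autoref{cor:charid} and \autoref{cor:lowestcharid}, the existence of some graded vector-space isomorphisms $\chi^\pm\colon A^{(q)}\to B^{(q)}$, the simple-current extensions) is already established in the paper and does not by itself produce a module structure; and everything after it (``granting this, $\chi^\pm$ realises $B^{(q)}$ as the adjoint $A^{(q)}$-module\dots'') is conditional on the deferred step.

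Beyond the deferral, there is a concrete reason the transport step is not even well defined for general $q$, which your sketch glosses over. The $q=4$ case works because both sides embed into \emph{free-field} \svoa{}s, $F^{3/2}$ and $F^{1/2}$, which are exterior algebras on explicit mode bases; the prescription \eqref{eq:intro2} is a bijection of PBW-type monomial bases, and the OPE verification collapses to the single anticommutator $[\Phi_i,\Phi_j]=\delta_{i+j,-1}$. For $q\neq2,4$, the algebras $L_{-2+q}(\sl_2)$ and $L_\mathrm{Vir}(c_{q,3},0)$ are quotients of Verma-type modules by the singular vectors $\raisebox{.5pt}{\textcircled{\raisebox{-.9pt}{2}}}$, so there is no canonical monomial basis on either side, and --- as \autoref{rem:inducedGVSI} stresses --- the isomorphisms $\chi^\pm$ exist only nonuniquely and cannot in general be written down; hence there is no ``prescription'' to transport until one has solved precisely the problem at hand. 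Your mod-$3$ splitting of the Virasoro modes into candidate fields $\tilde e,\tilde h,\tilde f$ cannot close into the affine relations as stated (the bracket $[L_m,L_n]=(m-n)L_{m+n}+\frac{c}{12}(m^3-m)\delta_{m+n,0}$ mixes residue classes and carries the wrong structure constants); you acknowledge that normally ordered corrections are needed but supply no candidate, and in the $q=4$ model those corrections come for free from the bilinear realisation $e,h,f\sim{:}\Phi^{(0)}\Psi^\pm{:},\ {:}\Psi^+\Psi^-{:}$, which has no known analogue here. Note also that duality in the sense of \autoref{rem:AWduality} requires, in addition to the mutual module structures, realising each algebra inside the vertex algebra of local fields acting on the other's modules; and that invoking \autoref{conj:galois} and Conjecture~\ref{conj:Hecke} as consistency checks imports statements that are themselves unproven for general $q$. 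So the proposal is a reasonable articulation of the strategy the authors themselves suggest (find a free-field realisation compatible with the mod-$3$ rescaling), but it contains no new mechanism and leaves the conjecture exactly as open as the paper does.
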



\section{Near-Admissible Case \texorpdfstring{$q=1$}{q=1}}\label{sec:gvsi}

We return to the near-admissible case in \autoref{sec:char} and consider the character correspondence between irreducible modules for $\smash{L_{-2+1/p}(\sl_2)=V^{-2+1/p}(\sl_2)}$ and $\smash{L_\mathrm{Vir}(c_{1,3p},0)=V_\mathrm{Vir}(c_{1,3p},0)}$ for $p\in\Ns$. It turns out that it has a number of interesting applications, some related to the 4d/2d-correspondence \cite{BLLPRR15}.


\subsection{Extension to \texorpdfstring{$\mathcal{V}^{(p)}$}{V(p)} and \texorpdfstring{$\mathcal{A}^{(p)}$}{A(p)}}\label{sec:VpAp}

Let $p\in\Ns$. Recall that in \autoref{prop:gvsi} we established graded vector-space isomorphisms \eqref{eq:psiplusminus}
\begin{equation*}
\psi^\pm\colon V^{-2+1/p}(\mu_{r,0})\overset{\sim}{\longrightarrow}M_{r+1,1;3p}
\end{equation*}
for all $r\in\N$. In the case of $p=2$, $\psi^-$ was already given in \cite{BN22}. Here, $\smash{V^{-2+1/p}(\mu_{r,0})}$ for $r\in\N$ are the irreducible Weyl modules for $\smash{V^{-2+1/p}(\sl_2)}$ and the $\smash{M_{\tilde{r},\tilde{s};3p}= L_\mathrm{Vir}(c_{1,3p},h_{\tilde{r},\tilde{s}}^{1,3p})}$ are the irreducible non-Verma modules for the Virasoro \voa{} at central charge $c=c_{1,3p}$.

The irreducible modules appearing in the above correspondence appear in the decomposition of certain well-known (generalised) \voa{}s. First, recall that $\mathcal{V}^{(p)}$ for $p\in\Ns$ is a family of simple \aia{}s of central charge $c=3k/(k+2)=3-6p$, conformally extending the affine \voa{} $\smash{L_{-2+1/p}(\sl_2)=V^{-2+1/p}(\sl_2)}$ at level $k=-2+1/p$, considered in \cite{Ada16,ACGY21}. In particular, for $p\equiv2\pmod4$ this is a simple \svoa{} with ``correct statistics'', and for $p\equiv0\pmod4$ it is a simple \voa{}. $\mathcal{V}^{(p)}$ is obtained as kernel of a certain screening operator $\tilde{Q}$ inside the tensor product of a $\beta\gamma$-system and a rank-$1$ lattice \aia{} generated by a lattice vector of squared norm $p/2$. Importantly for us, they decompose as
\begin{equation*}
\mathcal{V}^{(p)}=\bigoplus_{r=0}^\infty(r+1)V^{-2+1/p}(\mu_{r,0})
\end{equation*}
into a direct sum of Weyl modules for $V^{-2+1/p}(\sl_2)$.

On the other hand,
\begin{equation*}
\mathcal{A}^{(p)}=\bigoplus_{r=0}^\infty(r+1)M_{r+1,1;p}
\end{equation*}
for $p\in\Z$ with $p\geq2$ is the doublet, a simple \aia{} of central charge $c=c_{1,p}=13-6p-6/p$ \cite{FFT11,FT11,AM13}, again defined as kernel of a certain screening operator in a lattice \aia{}. It is a simple $\Z$-graded \svoa{} for $p\equiv2\pmod4$ and a simple $(1/2)\Z$-graded \voa{} for $p\equiv0\pmod4$.

\medskip

It is an immediate consequence of these decompositions and \autoref{prop:gvsi} that there is also a graded vector-space isomorphism between $\mathcal{V}^{(p)}$ and $\mathcal{A}^{(3p)}$:
\begin{prop}\label{prop:gvsispecial}
Let $p\in\Ns$. The maps $\psi^\pm\colon L_{-2+1/p}(\mu_{r,0})\overset{\sim}{\longrightarrow}M_{r+1,1;3p}$ in \eqref{eq:psiplusminus} for all $r\in\N$ form vector-space isomorphisms
\begin{equation*}
\psi^\pm\colon\mathcal{V}^{(p)}\overset{\sim}{\longrightarrow}\mathcal{A}^{(3p)}
\end{equation*}
that map a homogeneous vector of $(h_0,L_0)$-weight $(f,n)$ to a vector of $L_0$-weight $n'=\mp f/2+3n$, respectively. Moreover, if $p\equiv2\pmod4$, then both sides are \svoa{}s and $\psi^\pm$ is a parity-preserving map.
\end{prop}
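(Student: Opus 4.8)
The plan is to assemble $\psi^{\pm}$ on the generalised \voa{}s as the direct sum, over $r\in\N$ and with multiplicity $r+1$, of the summand-wise isomorphisms provided by \autoref{prop:gvsi}, and then to check that the assembled map respects the grading and, when $p\equiv2\pmod4$, the parity.

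First I would fix the two decompositions recalled above,
\begin{equation*}
\mathcal{V}^{(p)}=\bigoplus_{r=0}^\infty(r+1)\,V^{-2+1/p}(\mu_{r,0}),\qquad \mathcal{A}^{(3p)}=\bigoplus_{r=0}^\infty(r+1)\,M_{r+1,1;3p},
\end{equation*}
and define $\psi^{\pm}$ to act on each of the $(r+1)$ copies of the $r$-th summand by the isomorphism $\psi^{\pm}\colon L_{-2+1/p}(\mu_{r,0})\overset{\sim}{\longrightarrow}M_{r+1,1;3p}$ of \autoref{prop:gvsi} (using $L_{-2+1/p}(\mu_{r,0})=V^{-2+1/p}(\mu_{r,0})$ in the near-admissible case). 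Since each summand-wise map is bijective and the two indexing sets agree, the direct sum is a vector-space isomorphism $\mathcal{V}^{(p)}\overset{\sim}{\longrightarrow}\mathcal{A}^{(3p)}$, and the relation $n'=\mp f/2+3n$ is inherited verbatim from \autoref{prop:gvsi}, as it holds on each summand and the decompositions are into $(h_0,L_0)$-homogeneous subspaces.

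The substantive point is the parity claim for $p\equiv2\pmod4$ (so that $3p\equiv2\pmod4$ and both sides are \svoa{}s). Here I would stress that parity cannot be read off the $L_0$-grading across $\psi^{\pm}$, since the conformal weights on the two sides are related by the rescaling $n\mapsto\mp f/2+3n$: the algebra $\mathcal{V}^{(p)}$ has correct statistics, its odd sectors having half-integer weight because $\ell_{r,0}^{1,p}=pr(r+2)/4$ is a half-integer exactly for odd $r$, whereas $\mathcal{A}^{(3p)}$ is $\Z$-graded, as $h_{r+1,1}^{1,3p}=r(3p(r+2)-2)/4\in\Z$ for every $r$. Instead I would use the intrinsic formulation: on both sides the parity is the $\Z/2$-grading assigning the $r$-th summand the value $r\bmod2$. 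I would justify that this is a consistent grading on each side from the fusion rules, which are of $\sl_2$-type and hence force $r_3\in|r_1-r_2|+2\Z$, so $r_3\equiv r_1+r_2\pmod2$; it places the vacuum ($r=0$) in the even part; and, as recalled from \cite{Ada16,ACGY21} and \cite{FFT11,FT11,AM13}, it is exactly the superalgebra parity of $\mathcal{V}^{(p)}$ and of the doublet $\mathcal{A}^{(3p)}$. Because $\psi^{\pm}$ sends the $r$-th summand to the $r$-th summand, it preserves this $\Z/2$-grading and is therefore parity-preserving.

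I expect the main obstacle to be exactly this parity verification, because the naive invariant --- integer versus half-integer $L_0$-weight --- is genuinely not preserved by $\psi^{\pm}$. The one delicate step is thus to identify parity with the intrinsic $r\bmod2$ grading and to confirm that this coincides with the superalgebra parity used in the cited constructions of both $\mathcal{V}^{(p)}$ and $\mathcal{A}^{(3p)}$; once this is in place, assembling the isomorphism and checking the grading identity are routine.
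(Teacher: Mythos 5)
Your proposal is correct and follows essentially the same route as the paper: assemble $\psi^\pm$ summand-wise from \autoref{prop:gvsi} via the two decompositions, with the grading relation inherited immediately, and then observe that for $p\equiv2\pmod4$ the superalgebra parity on both $\mathcal{V}^{(p)}$ and $\mathcal{A}^{(3p)}$ is just the parity of $r$, which $\psi^\pm$ manifestly preserves. Your extra remarks (that integer versus half-integer $L_0$-weight is not the right invariant, and the fusion-rule consistency check for the $r\bmod2$ grading) are sound elaborations of what the paper states in one line, not a different argument.
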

\begin{proof}
We only need to prove the last assertion. To this end, note that for $p\equiv2\pmod4$ the parity in $\mathcal{V}^{(p)}$ and $\mathcal{A}^{(p)}$ is simply the parity of $r$ in $L_{-2+1/p}(\mu_{r,0})$ and $M_{r+1,1;p}$, respectively.
\end{proof}
The special case of $p=2$ of this result is stated in \cite{BN22} for $\psi^-$. In the subsequent section, we shall study this case in more detail.

There is also the corresponding statement on the level of characters, which follows from \autoref{cor:charid2} and \autoref{cor:lowestcharid2}.
\begin{prop}\label{prop:charid2special}
Let $p\in\Ns$. Then the following character identities hold:
\begin{equation*}
\ch^*_{\mathcal{V}^{(p)}}(q^{\mp1/2},q^3)=\ch^*_{\mathcal{A}^{(3p)}}(q).
\end{equation*}
Moreover, if $p\equiv2\pmod4$, then both sides are \svoa{}s and also the supercharacters satisfy
\begin{equation*}
\sch^*_{\mathcal{V}^{(p)}}(q^{\mp1/2},q^3)=\sch^*_{\mathcal{A}^{(3p)}}(q).
\end{equation*}
\end{prop}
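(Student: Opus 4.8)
The plan is to derive the proposition directly from the per-module character identities already in hand, by summing over the known decompositions of $\mathcal{V}^{(p)}$ and $\mathcal{A}^{(3p)}$; no new computation beyond a resummation is needed.

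First I would record that the decompositions
\begin{equation*}
\mathcal{V}^{(p)}=\bigoplus_{r=0}^\infty(r+1)V^{-2+1/p}(\mu_{r,0})\quad\text{and}\quad\mathcal{A}^{(3p)}=\bigoplus_{r=0}^\infty(r+1)M_{r+1,1;3p}
\end{equation*}
give, upon taking graded dimensions,
\begin{equation*}
\ch^*_{\mathcal{V}^{(p)}}(w,q)=\sum_{r=0}^\infty(r+1)\,\ch^*_{V^{-2+1/p}(\mu_{r,0})}(w,q),
\end{equation*}
and similarly $\ch^*_{\mathcal{A}^{(3p)}}(q)=\sum_{r=0}^\infty(r+1)\,\ch^*_{M_{r+1,1;3p}}(q)$. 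Since the Weyl modules are irreducible, $V^{-2+1/p}(\mu_{r,0})=L_{-2+1/p}(\mu_{r,0})$, so I would apply the substitution $(w,q)\mapsto(q^{\mp1/2},q^3)$ term by term and invoke \autoref{cor:charid2} for the upper sign and \autoref{cor:lowestcharid2} for the lower sign to get $\ch^*_{V^{-2+1/p}(\mu_{r,0})}(q^{\mp1/2},q^3)=\ch^*_{M_{r+1,1;3p}}(q)$ for every $r\in\N$. Summing over $r$ with the multiplicities $(r+1)$ then yields the asserted character identity. (Equivalently, the identity is the shadow of the graded vector-space isomorphism $\psi^\pm$ of \autoref{prop:gvsispecial}, which sends a vector of $(h_0,L_0)$-weight $(f,n)$ to one of $L_0$-weight $\mp f/2+3n$.) The only point needing a word of care is that these infinite sums are legitimate formal $q$-series: this is clear because the lowest $L_0$-eigenvalue $h_{r+1,1}^{1,3p}$ of $M_{r+1,1;3p}$ tends to $+\infty$ as $r\to\infty$, so each power of $q$ receives contributions from only finitely many summands and the term-by-term rearrangement is valid.

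For the supercharacter statement I would assume $p\equiv2\pmod4$, so that $3p\equiv2\pmod4$ as well and both $\mathcal{V}^{(p)}$ and $\mathcal{A}^{(3p)}$ carry \svoa{} structures. As recorded in the proof of \autoref{prop:gvsispecial} (applied with $3p$ in place of $p$ on the Virasoro side), the parity is constant on each summand and equals the parity of $r$, on both sides: for $p\equiv2\pmod4$ the lowest weight $\ell_{r,0}^{1,p}=pr(r+2)/4$ is an integer precisely when $r$ is even, and the $L_0$-weights within a module differ from it by integers, so each summand is purely even or purely odd according to $r\bmod2$. Inserting the sign $(-1)^r$ in front of each summand on both sides and repeating the resummation above verbatim gives the supercharacter identity.

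The hard part is thus not any analytic or algebraic step but the parity bookkeeping just described: one must verify that the $\Z_2$-grading of each \svoa{} restricts to the constant value $r\bmod2$ on the respective summand, and that these values agree across the correspondence $V^{-2+1/p}(\mu_{r,0})\leftrightarrow M_{r+1,1;3p}$. Once this matching is in place, the supercharacter identity follows by the identical summation as for the ordinary characters, and there is no further obstacle.
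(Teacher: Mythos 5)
Your main line of argument is exactly the paper's: Proposition~\ref{prop:charid2special} is obtained there by summing the per-module identities of \autoref{cor:charid2} and \autoref{cor:lowestcharid2} over the decompositions $\mathcal{V}^{(p)}=\bigoplus_{r\geq0}(r+1)V^{-2+1/p}(\mu_{r,0})$ and $\mathcal{A}^{(3p)}=\bigoplus_{r\geq0}(r+1)M_{r+1,1;3p}$, with the supercharacter case resting on the parity statement established in the proof of \autoref{prop:gvsispecial}; your remark that the resummation is formally legitimate because the lowest $L_0$-weights tend to $+\infty$ is a correct point the paper leaves implicit. There is, however, one flawed sub-justification: your claim that on \emph{both} sides the parity can be read off from integrality of the lowest $L_0$-weight. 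This is valid for $\mathcal{V}^{(p)}$, where for $p\equiv2\pmod4$ the weight $\ell_{r,0}^{1,p}=pr(r+2)/4$ is half-integral precisely when $r$ is odd, but it fails for $\mathcal{A}^{(3p)}$: there $h_{r+1,1}^{1,3p}=r(-2+3p(r+2))/4$ is an integer for \emph{every} $r$ (for $r$ odd, $3p\equiv2\pmod4$ gives $3p(r+2)\equiv2\pmod 4$, so $-2+3p(r+2)\equiv0\pmod4$), consistent with the paper's statement that $\mathcal{A}^{(p)}$ is a $\Z$-graded \svoa{} for $p\equiv2\pmod4$. Thus the $\Z_2$-grading of the doublet is invisible to the $L_0$-weights and must be taken as structural, coming from its construction inside a lattice \svoa{} --- which is exactly what the assertion in the proof of \autoref{prop:gvsispecial} (parity equals the parity of $r$ on both sides) records. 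Since you do cite that assertion, your supercharacter argument survives; simply delete or correct the weight-based ``verification'' on the Virasoro side, which as written proves the wrong thing there.
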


We remark that, in a more standard setting, the \aia{}s $\mathcal{V}^{(p)}$ and $\mathcal{A}^{(p)}$ for the same $p$ are related by quantum Hamiltonian reduction \cite{ACGY21}, but here we relate them for different $p$.


\subsection{Special Case \texorpdfstring{$p=2$}{p=2}}

We consider the special case of $p=2$. The \svoa{} $\mathcal{V}^{(2)}$ is the simple quotient $\smash{\mathcal{V}^{(2)}=L_{c=-9}^{\mathcal{N}=4}}$ of the small $\mathcal{N}=4$ superconformal algebra of central charge $c=-9$ \cite{Kac98,Ada16}, which appears in many interesting applications (see, e.g., \cite{BLLPRR15,BMR19,AKM23}).

In \cite{BN22}, the authors explicitly propose a graded vector-space isomorphism $\smash{\psi^-\colon\mathcal{V}^{(2)}\overset{\sim}{\longrightarrow}\mathcal{A}^{(6)}}$ in terms of the strong generators of these \svoa{}s. This isomorphism $\psi^-$ maps a vector in $\mathcal{V}^{(2)}$ of $(h_0,L_0)$-weight $(f,n)$ to a vector in $\mathcal{A}^{(6)}$ of $L_0$-weight $n'=f/2+3n$ and has a number of further nice properties. However, it is not completely clear to us that this $\psi^-$ would restrict exactly to the $\smash{\psi^-\colon V^{-3/2}(\mu_{r,0})\overset{\sim}{\longrightarrow}M_{r+1,1;6}}$ that we constructed above.

We remark that it should be possible to write down a vector-space isomorphism $\smash{\psi^+\colon\mathcal{V}^{(2)}\overset{\sim}{\longrightarrow}\mathcal{A}^{(6)}}$ in a similar way. Recall that the isomorphisms $\psi^+$ and $\psi^-$ are related by the inner automorphism $e\mapsto f$, $f\mapsto e$, $h\mapsto -h$ of $\smash{V^{-3/2}(\sl_2)}$. This automorphism extends to $\mathcal{V}^{(2)}$ by interchanging the roles of $G^-$ and $G^+$ and those of $\smash{\tilde{G}^-}$ and $\smash{\tilde{G}^+}$. This would allow us to define $\psi^+$ in terms of the strong generators of $\mathcal{V}^{(2)}$ and $\mathcal{A}^{(6)}$. This isomorphism $\psi^+$ then maps a homogeneous vector in $\mathcal{V}^{(2)}$ of $(h_0,L_0)$-weight $(f,n)$ to a vector in $\mathcal{A}^{(6)}$ of $L_0$-weight $n'=-f/2+3n$.

\medskip

We demonstrate the relation between the characters and supercharacters
\begin{equation*}
\operatorname{(s)ch}^*_{\mathcal{V}^{(2)}}(q^{\mp1/2},q^3)=\operatorname{(s)ch}^*_{\mathcal{A}^{(6)}}(q)
\end{equation*}
from \autoref{prop:charid2special} for $p=2$ concretely. The Jacobi (super)character of $\mathcal{V}^{(2)}$ is given by (see, e.g., \cite{AKM23})
{\allowdisplaybreaks
\begin{align*}
\ch_{\mathcal{V}^{(2)}}(w,q)&=\frac{\sum_{n=0}^\infty(n+1)(w^{n+1}-w^{-n-1})q^{(n+1)^2/2}}{q^{1/8}(q;q)_\infty(w^2q;q)_\infty(w^{-2}q;q)_\infty(w-w^{-1})}\\
&=\frac{w\partial_w(\sum_{n\in\Z}w^nq^{n^2/2})}{wq^{1/8}(q;q)_\infty(w^2q;q)_\infty(w^{-2};q)_\infty}\\
&=\frac{w\partial_w\vartheta_{00}(w,q)}{-\i\vartheta_{11}(w^2,q)},\\
\sch_{\mathcal{V}^{(2)}}(w,q)&=\frac{\sum_{n=0}^\infty(-1)^n(n+1)(w^{n+1}-w^{-n-1})q^{(n+1)^2/2}}{q^{1/8}(q;q)_\infty(w^2q;q)_\infty(w^{-2}q;q)_\infty(w-w^{-1})}\\
&=\frac{-w\partial_w(\sum_{n\in\Z}(-1)^nw^nq^{n^2/2})}{wq^{1/8}(q;q)_\infty(w^2q;q)_\infty(w^{-2};q)_\infty}\\
&=\frac{w\partial_w\vartheta_{01}(w,q)}{\i\vartheta_{11}(w^2,q)}.
\end{align*}
}%
On the other hand, the character and supercharacter of $\mathcal{A}^{(6)}$ are given by
\begin{align*}
\ch^*_{\mathcal{A}^{(6)}}(q)&=\frac{\sum_{n=0}^\infty(n+1)(1-q^{n+1})q^{(3n^2+5n)/2}}{(q;q)_\infty},\\
\sch^*_{\mathcal{A}^{(6)}}(q)&=\frac{\sum_{n=0}^\infty(n+1)(-1)^n(1-q^{n+1})q^{(3n^2+5n)/2}}{(q;q)_\infty},
\end{align*}
as can be seen from the decomposition of $\mathcal{A}^{(6)}$ into irreducible modules for the Virasoro \voa{} at central charge $c=-24$. It is not difficult to verify \autoref{prop:charid2special} directly now.

\medskip

We also sketch how the character identities between $\mathcal{V}^{(2)}$ and $\mathcal{A}^{(6)}$ can be extended to certain modules of these \svoa{}s, as suggested in \cite{PY25}. We explain how they arise as graded vector-space isomorphisms of free-field realisations compatible with certain nonsplit extensions.

Recall that $\mathcal{V}^{(2)}$ has two irreducible modules in the category $\mathcal{O}$, which are denoted in \cite{Ada16} by $\textsl{SC}\Lambda(1)=\mathcal{V}^{(2)}$ and $\textsl{SC}\Pi(1)$. The \svoa{} $\mathcal{V}^{(2)}$ is realised as a subalgebra of the \svoa{} $M\otimes F$, where $M$ is the Weyl \voa{} (or $\beta\gamma$-system) with central charge $c=2$ and $F$ the Clifford \svoa{} (or $bc$-system) with central charge $c=-11$. Moreover, $M\otimes F$ is a nonsplit extension of $\textsl{SC}\Lambda(1)$ and $\textsl{SC}\Pi(1)$,
\begin{equation*}
0\rightarrow \textsl{SC}\Lambda(1)\rightarrow M\otimes F\rightarrow \textsl{SC}\Pi(1)\rightarrow 0.\label{eq-ext-N4}
\end{equation*}

The doublet \svoa{} $\mathcal{A}^{(6)}$, on the other hand, is realised as a vertex subalgebra of the lattice \svoa{} $\smash{V_{\Z\gamma}}$ with bilinear form $\langle\gamma,\gamma\rangle=3$. It has three irreducible modules, denoted in \cite{AM13} by $\Lambda\Pi(1)=\mathcal{A}^{(6)}$, $\Lambda\Pi(3)$ and $\Lambda\Pi(5)$. Moreover, $\smash{V_{\Z\gamma}}$ is a nonsplit extension of $\Lambda\Pi(1)$ and $\Lambda\Pi(5)$,
\begin{equation*}
0\rightarrow\Lambda\Pi(1)\rightarrow V_{\Z\gamma}\rightarrow\Lambda\Pi(5)\rightarrow 0.\label{eq-ext-doublet}
\end{equation*}

One can show that there is a vector-space isomorphism between $M \otimes F$ and $V_{\Z \gamma}$. Indeed, we first find vector-space isomorphisms between Wakimoto modules for $V^{-3/2}(\sl_2)$ and Fock spaces for $V_\mathrm{Vir}(c_{1,6},0)$, similar to the isomorphisms between Verma modules, and then extend these to an isomorphism between $M\otimes F$ and $V_{\Z\gamma}$. The corresponding character identity is
\begin{equation*}
\ch^*_{M\otimes F}(q^{\mp1/2},q^3)=\ch^*_{V_{\Z\gamma}}(q).
\end{equation*}
Then, using the two nonsplit extensions, we obtain a vector-space isomorphism between $\textsl{SC}\Pi(1)$ and $\Lambda\Pi(5)$, and moreover between certain indecomposable modules. The corresponding character identity is
\begin{equation*}
\ch^*_{\textsl{SC}\Pi(1)}(q^{\mp1/2},q^3)=\ch^*_{\Lambda\Pi(5)}(q).
\end{equation*}


\subsection{Physical Interpretation}\label{sec:physics}

We briefly mention an interesting physical context (see \cite{BN22}) for the character identity in \autoref{prop:charid2special} for $p=2$, which we discussed above.

\medskip

The 4d/2d-correspondence \cite{BLLPRR15,BR18}
\begin{equation*}
\mathbb{V}\colon\{\text{4d $\mathcal{N}=2$ SCFTs}\}\longrightarrow \{\text{VOSAs}\}
\end{equation*}
associates a \svoa{} $\mathbb{V}(\T)$ with each 4d $\mathcal{N}=2$ superconformal field theory $\T$ in physics. Then the Schur index $\I_{\T}(q)$ of $\T$ equals the supercharacter
\begin{equation*}
\I_{\T}(q)=\sch^*_{\mathbb{V}(\T)}(q)
\end{equation*}
of $\mathbb{V}(\T)$.

The \svoa{} $\mathbb{V}(\T)$ is expected to be simple, $(1/2)\Z$-graded, quasi-lisse and of CFT-type; see \cite{RR24} for a nice survey. Based on the results in \cite{AK18,Li23}, it is expected that the supercharacter $\sch_{\mathbb{V}(\T)}(q)$ of $\mathbb{V}(\T)$ sits inside a vector-valued quasimodular form of weight~$0$.

\medskip

The work in \cite{BN22} is motivated by an interesting relation between two 4d $\mathcal{N}=2$ superconformal field theories, in particular with regard to their Schur sectors. These two theories are the $\mathcal{N}=4$ supersymmetric Yang-Mills theory with gauge group $\SU(2)$, denoted by $\T_{\SU(2)}$, and a certain $\mathcal{N}=2$ theory $\T_{(3,2)}$ obtained as marginal diagonal $\SU(2)$ gauging of three copies of the $D_3(\SU(2))\cong(A_1,A_3)$ Argyres-Douglas theory \cite{BN16,DVX15}. Their associated \svoa{}s in the 4d/2d-correspondence are given by
\begin{equation*}
\mathbb{V}(\T_{\SU(2)})\cong\mathcal{V}^{(2)}\quad\text{and}\quad\mathbb{V}(\T_{(3,2)})\cong\mathcal{A}^{(6)};
\end{equation*}
see \cite{BMR19,AKM23} and \cite{BN16}, respectively. This is summarised in \autoref{fig:4d2d}.

\begin{figure}[ht]
\begin{tikzcd}
a=c=3/4
&
\T_{\SU(2)}
\arrow{rr}{\mathbb{V}}
&&
\mathcal{V}^{(2)}
\arrow{d}{\psi^-}
&
c=-9
\\
a=c=2
&
\T_{(3,2)}
\arrow{rr}{\mathbb{V}}
&&
\mathcal{A}^{(6)}
&c=-24
\end{tikzcd}
\caption{Graded vector-space isomorphism between the two \svoa{}s $\mathcal{V}^{(2)}$ and $\mathcal{A}^{(6)}$ in the image of the 4d/2d-correspondence.}
\label{fig:4d2d}
\end{figure}

On the level of the Schur indices, it is shown that
\begin{equation*}
\I_{\SU(2)}(q^{1/2},q^3)=\I_{(3,2)}(q),
\end{equation*}
which then implies the relation for the supercharacters of the corresponding \svoa{}s (see \autoref{prop:charid2special})
\begin{equation*}
\sch^*_{\mathcal{V}^{(2)}}(q^{1/2},q^3)=\sch^*_{\mathcal{A}^{(6)}}(q).
\end{equation*}
This then motivates the existence of the map $\smash{\psi^-\colon\mathcal{V}^{(2)}\overset{\sim}{\longrightarrow}\mathcal{A}^{(6)}}$. Of course, one could have equally well stated $\smash{\I_{\SU(2)}(q^{-1/2},q^3)=\I_{(3,2)}(q)}$, leading to the map $\smash{\psi^+\colon\mathcal{V}^{(2)}\overset{\sim}{\longrightarrow}\mathcal{A}^{(6)}}$, by interchanging the roles of $e$ and $f$ in $\sl_2$.

\medskip

As somewhat of a digression, we mention that the relation between the Schur indices of $\T_{(3,2)}$ and $\T_{\SU(2)}$ can be generalised considerably. Indeed, a large class of 4d $\mathcal{N}=2$ superconformal field theories (with $a=c$) can be related to 4d $\mathcal{N}=4$ supersymmetric Yang-Mills theories via their Schur indices \cite{BN22,KLS21} (see also \cite{Jia24,LPY25}).

For instance, one can consider the generalised Argyres-Douglas theories $\T_{(n,N)}$ with $n=2,3,4,6$ and $N\in\Z_{\geq2}$ such that $(n,N)=1$. They are obtained in \cite{BN22,KLS21} by taking a certain collection of $D_{n_i}(\SU(N))$ theories \cite{CD13,CDG13} and gauging a diagonal exactly marginal $\SU(N)$ symmetry. It is expected that
\begin{equation*}
\I_{(n,N)}(q)=\I_{\SU(N)}(q^{n/2-1},q^n),
\end{equation*}
where the right-hand side is the Schur index of the 4d $\mathcal{N}=4$ supersymmetric Yang-Mills theory $\T_{\SU(N)}$ with gauge group $\SU(N)$.

The Schur index $\I_{\SU(N)}$ of $\T_{\SU(N)}$, and in particular its modular properties, are well-studied in the literature; see, e.g., \cite{BDF15,PP22,PWZ22,BSR22}. The modular properties for the Schur indices $\I_{(n,N)}$ of the Argyres-Douglas theories were recently studied in \cite{Jia24}.

It is a natural question to ask:
\begin{prob}\label{prob:generalschur}
Is there a graded vector-space isomorphism between the \svoa{}s $\mathbb{V}(\T_{(n,N)})$ and $\mathbb{V}(\T_{\SU(N)})$, as depicted in \autoref{fig:4d2dconj}?
\end{prob}
\begin{figure}[ht]
\begin{tikzcd}[column sep=small]
a=c=\frac{1}{4}(N^2-1)
&
\T_{\SU(N)}
\arrow{rr}{\mathbb{V}}
&&
\mathbb{V}(\T_{\SU(N)})
\arrow{d}{?}
&
c=-3(N^2-1)
\\
a=c=\frac{n-1}{n}(N^2-1)
&
\T_{(n,N)}
\arrow{rr}{\mathbb{V}}
&&
\mathbb{V}(\T_{(n,N)})
&c=-12\frac{n-1}{n}(N^2-1)
\end{tikzcd}
\caption{Conjectural graded vector-space isomorphisms between certain \svoa{}s in the image of the 4d/2d-correspondence (for each $n=2,3,4,6$).}
\label{fig:4d2dconj}
\end{figure}

We note that such an isomorphism should map a vector of $L_0$-weight $k$ and Lie algebra weight $f$ to a vector of $L_0$-weight $k'=nk+(n/2-1)f$ and preserve parity. However, it is apparent that unless $n=3$, such a vector-space isomorphism will not directly come from the correspondence between modules for the affine \voa{} for $\sl_2$ and the Virasoro \voa{} in this text.

The \svoa{}s $\mathbb{V}(\T_{\SU(N)})=W_{S_N}$ and many of their properties were described in \cite{BLLPRR15,BMR19,AKM23}. As mentioned above, for $N=2$, one recovers the simple quotient $W_{S_2}=\mathcal{V}^{(2)}$ of the small $\mathcal{N}=4$ superconformal algebra at central charge $c=-9$ \cite{Ada16}. However, the \voa{}s corresponding to the Argyres-Douglas theories $\T_{(n,N)}$ are in general expected to be complicated and not yet explicitly known.

\medskip

Returning to the contents of this paper, we recall that we established graded vector-space isomorphisms
\begin{equation*}
\psi^\pm\colon\mathcal{V}^{(p)}\overset{\sim}{\longrightarrow}\mathcal{A}^{(3p)}
\end{equation*}
between the \aia{}s
\begin{equation*}
\mathcal{V}^{(p)}\text{ with }c=3-6p\qquad\text{and}\qquad\mathcal{A}^{(3p)}\text{ with }c=c_{1,3p}=13-18p-2/p
\end{equation*}
for $p\in\Ns$; see \autoref{prop:gvsispecial}. While we did not connect these to 4d superconformal field theories, we suspect that they still play a role in this context.
\begin{prob}\label{prob:VpA3p}
Does the character identity between $\mathcal{V}^{(p)}$ and $\mathcal{A}^{(3p)}$ for $p\in\Ns$ in \autoref{prop:charid2special} have a physical interpretation in the context of the 4d/2d-correspondence?
\end{prob}


\section{Boundary Admissible Case \texorpdfstring{$q=2$}{q=2}}\label{sec:boundary}

Inspired by the relation between the Schur indices of $\T_{\SU(2)}$ and $\T_{(3,2)}$ discussed in \autoref{sec:physics}, we present further identities between Schur indices of 4d $\mathcal{N}=2$ superconformal field theories based on the character identities in the admissible case in \autoref{sec:admissible}. We specialise to the boundary admissible case $q=2$.

\medskip

For $q=2$, the character identities in \autoref{cor:charid} and \autoref{cor:lowestcharid} for the vacuum modules are
\begin{equation*}
\ch^*_{L_{-2+2/p}(\sl_2)}(q^{\mp1/2},q^3)=\ch^*_{L_\mathrm{Vir}(c_{2,3p},0)}(q)
\end{equation*}
for $p\in\Ns$ odd. On the other hand, we recall (see, e.g., \cite{BR18}) that these \voa{}s correspond to certain 4d $\mathcal{N}=2$ superconformal field theories. Indeed, they are the associated \voa{}s
\begin{equation*}
\mathbb{V}(\T_{(A_1,D_{2n+1})})\cong L_{-2+2/(2n+1)}(\sl_2)
\quad\text{and}\quad
\mathbb{V}(\T_{(A_1,A_{2n})})\cong L_\mathrm{Vir}(c_{2,2n+3},0),
\end{equation*}
$n\in\Ns$, of Argyres-Douglas theories of type $(A_1,D_{2n+1})$ and $(A_1,A_{2n})$, respectively. This is summarised in \autoref{fig:4d2dboundary}.

\begin{figure}[ht]
\begin{tikzcd}[column sep=0]
\begin{array}{l}
a=\frac{n(8n+3)}{8(2n+1)}\\c=\frac{n}{2}
\end{array}
&
\T_{(A_1,D_{2n+1})}
\arrow{rr}{\mathbb{V}}
&~&
L_{-2+2/(2n+1)}(\sl_2)
\arrow{d}{\psi^\pm}
&
c=\frac{3k_{2,2n+1}}{2+k_{2,2n+1}}=-6n
\\
\begin{array}{l}
a=\frac{n(72n+19)}{24(2n+1)}\\
c=\frac{n(18n+5)}{6(2n+1)}
\end{array}
&
\T_{(A_1,A_{6n})}
\arrow{rr}{\mathbb{V}}
&~&
L_\mathrm{Vir}(c_{2,6n+3},0)
&c=c_{2,6n+3}=-\frac{2n(18n+5)}{2n+1}
\end{tikzcd}
\caption{Graded vector-space isomorphism between the \voa{}s $\smash{L_{-2+2/(2n+1)}(\sl_2)}$ and $\smash{L_\mathrm{Vir}(c_{2,6n+3},0)}$ in the image of the 4d/2d-correspondence.}
\label{fig:4d2dboundary}
\end{figure}

On the level of Schur indices, it then follows:
\begin{cor}\label{cor:schur}
For $n\in\Ns$, the following identity of Schur indices holds:
\begin{equation*}
\I_{(A_1,D_{2n+1})}(q^{\mp1/2},q^3)=\I_{(A_1,A_{6n})}(q).
\end{equation*}
\end{cor}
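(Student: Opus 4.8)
The plan is to deduce this purely from the 4d/2d dictionary together with the vacuum-module character identity already established in the boundary admissible case. First I would recall, as recorded in \autoref{fig:4d2dboundary}, the identifications $\mathbb{V}(\T_{(A_1,D_{2n+1})})\cong L_{-2+2/(2n+1)}(\sl_2)$ and $\mathbb{V}(\T_{(A_1,A_{6n})})\cong L_\mathrm{Vir}(c_{2,6n+3},0)$ for $n\in\Ns$; the latter comes from the general rule $\mathbb{V}(\T_{(A_1,A_{2m})})\cong L_\mathrm{Vir}(c_{2,2m+3},0)$ upon setting $m=3n$, so that $2m+3=6n+3=3p$ with $p=2n+1$. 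Under the correspondence, the (flavored) Schur index of a theory equals the (super)character of its associated VOSA, with the $\SU(2)$ flavor fugacity of the $(A_1,D_{2n+1})$ theory matched to the variable $w$ that tracks the $h_0$-grading in the Jacobi character; the $(A_1,A_{6n})$ theory carries no continuous flavor symmetry, consistent with the single-variable character of the Virasoro minimal model.

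With these identifications fixed, the essential input is the $p=2n+1$ specialization of the vacuum-module identity stated at the start of this section (itself the $q=2$ case of \autoref{cor:charid} and \autoref{cor:lowestcharid}):
\begin{equation*}
\ch^*_{L_{-2+2/(2n+1)}(\sl_2)}(q^{\mp1/2},q^3)=\ch^*_{L_\mathrm{Vir}(c_{2,6n+3},0)}(q).
\end{equation*}
Because both vertex algebras are purely bosonic, the supercharacter coincides with the ordinary character, so this is simultaneously an identity of supercharacters. Reading the left-hand side as $\sch^*_{\mathbb{V}(\T_{(A_1,D_{2n+1})})}(q^{\mp1/2},q^3)=\I_{(A_1,D_{2n+1})}(q^{\mp1/2},q^3)$ and the right-hand side as $\sch^*_{\mathbb{V}(\T_{(A_1,A_{6n})})}(q)=\I_{(A_1,A_{6n})}(q)$ then yields the claimed Schur index identity.

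The step requiring care is not conceptual but a matter of normalization: the characters in this paper are defined without the modular factor $q^{-c/24}$, whereas Schur indices are frequently normalized with it. One should therefore check that the two normalizations differ only by an overall power of $q$ that is compatible with the substitution $(w,q)\mapsto(q^{\mp1/2},q^3)$, comparing the central charges $c=-6n$ for $L_{-2+2/(2n+1)}(\sl_2)$ and $c=c_{2,6n+3}=-2n(18n+5)/(2n+1)$ for $L_\mathrm{Vir}(c_{2,6n+3},0)$. Adopting throughout the convention $\I_\T(q)=\sch^*_{\mathbb{V}(\T)}(q)$ fixed in \autoref{sec:physics} sidesteps this entirely, in which case the corollary is an immediate restatement of the already-proven character identity. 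I expect this last bookkeeping to be the only point needing attention, since the nontrivial mathematical content lives entirely in \autoref{cor:charid} and \autoref{cor:lowestcharid}.
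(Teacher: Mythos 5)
Your proposal is correct and follows essentially the same route as the paper: \autoref{cor:schur} is deduced immediately from the boundary-admissible vacuum-module character identity of \autoref{cor:charid} and \autoref{cor:lowestcharid} (with $p=2n+1$, so $3p=6n+3$ and $m=3n$ in $A_{2m}$), combined with the identifications $\mathbb{V}(\T_{(A_1,D_{2n+1})})\cong L_{-2+2/(2n+1)}(\sl_2)$ and $\mathbb{V}(\T_{(A_1,A_{6n})})\cong L_\mathrm{Vir}(c_{2,6n+3},0)$ and the dictionary $\I_\T=\sch^*_{\mathbb{V}(\T)}$. Your added remarks on the bosonic supercharacter and the $q^{-c/24}$ normalisation are sensible bookkeeping but do not change the argument.
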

It is natural to ask the following, but note that one difference is that these theories do not satisfy $a=c$ any longer.
\begin{prob}\label{prob:argyresdouglas}
Is there a physical interpretation of the Schur index identity in \autoref{cor:schur}, in the sense of \cite{BN22,KLS21} (see \autoref{sec:physics})?
\end{prob}

\smallskip

The boundary admissible case is also interesting from a mathematical point of view, namely in the context of classical freeness.
\begin{rem}\label{rem:classicallyfree}
It is expected that the \voa{}s $L_{-2+2/p}(\sl_2)$ for $p\in\Ns$ odd are classically free in the sense of \cite{EH21,AEH23}. On the other hand, it was proved in \cite{EH21} that the \voa{}s $L_\mathrm{Vir}(c_{2,p},0)$ are classically free. We believe that our vector-space isomorphism between the \voa{}s $L_{-2+2/p}(\sl_2)$ and $L_\mathrm{Vir}(c_{2,3p},0)$ can be used to prove the classical freeness of affine \voa{}s. In particular, it would be interesting to find an $\sl_2$-version of the basis of $L_\mathrm{Vir}(c_{2,3p},0)$ from \cite{FF93}. We plan to study this problem in forthcoming work.
\end{rem}


\section{Outlook: Relaxed and Whittaker Modules}\label{sec:relaxed}

So far, we only considered modules for $V^k(\sl_2)$ and $V_\mathrm{Vir}(c,0)$ that have finite-dimensional weight spaces and hence well-defined characters, for the former in the sense of a Jacobi character; see the introduction of \autoref{sec:reps}. In the following, we sketch how to extend the (graded) vector-space isomorphisms to modules not necessarily satisfying this condition, in particular relaxed highest-weight modules and Whittaker modules.

For the former, the formal Jacobi character on the $V^k(\sl_2)$-side still makes sense, while there is no character on the $V_\mathrm{Vir}(c,0)$-side anymore. For the Whittaker modules, neither side has well-defined formal characters.


\subsection{Relaxed Highest-Weight Modules}\label{sec:relaxedrelaxed}

Let $k\in\C\setminus\{-2\}$. Let $U$ be any module for $\sl_2$ such that the Casimir element $\Omega=ef+fe+h^2/2$ of $U(\sl_2)$ acts as $\chi\id$. As explained in \autoref{sec:affine}, $U$ is a module for the Lie subalgebra
\begin{equation*}
P=\smash{\hat{\sl}}_2^{\geq0}=\sl_2\otimes\C[t]\oplus\C K\subseteq\hat{\sl}_2
\end{equation*}
of $\smash{\hat\sl_2}$ such that $\sl_2\otimes t\C[t]$ acts trivially, $K$ as $k\id$ and $L_0=\Omega/(2(k+2))$ as $\chi\id/(2(k+2))$. There is the induced module $\smash{V^k(U)=U(\hat{\sl}_2)\otimes_{U(P)}U}$ for $\smash{\hat\sl_2}$ and for $V^k(\sl_2)$. For example, if $U$ is a Verma module or lowest-weight Verma module for $\sl_2$, then $V^k(U)$ is a Verma module or lowest-weight Verma module for $V^k(\sl_2)$, respectively. We found graded vector-space isomorphisms $\smash{\psi^\pm}$ between these modules and Verma modules for the Virasoro algebra (see \autoref{Verma-modules}).

However, as indicated in \autoref{sec:affine}, we can also consider irreducible weight modules for $\sl_2$ that are neither highest- nor lowest-weight. These modules still have $1$-dimensional weight spaces. We explain their construction. For $(\lambda,\chi)\in\C^2$, there is an $\sl_2$-module
\begin{equation*}
E_{\lambda,\chi}=\bigoplus_{n\in\Z}\C v_{\lambda+2n}
\end{equation*}
with the $\sl_2$-action given by
\begin{align*}
hv_{\lambda+2n}&=(\lambda+2n)v_{\lambda+2n},\\
ev_{\lambda+2n}&=v_{\lambda+2n+2},\\
fv_{\lambda+2n}&=\frac{1}{2}\Bigl(\chi-\frac{1}{2}(\lambda+2n-2)^2-(\lambda+2n-2) \Bigr)v_{\lambda+2n-2}
\end{align*}
for $n\in\Z$. Then, indeed, $\Omega$ acts as $\chi\id$. One sees that $E_{\lambda,\chi}$ is reducible if and only if there is a $\mu\in\lambda+2\Z$ such that $\chi=\mu^2/2+\mu$.

Then, we consider the induced $\smash{\hat\sl_2}$-module
\begin{equation}\label{eq:real-relaxed}
\mathcal{E}^{\sl_2}_{\lambda,\chi}=V^k(E_{\lambda,\chi})=U(\hat{\sl}_2)\otimes_{U(P)}E_{\lambda,\chi}.
\end{equation}
This module is a $\N$-gradable module for $V^k(\sl_2)$ called a \emph{relaxed highest-weight module}; see \cite{AM95,RW15,CMY24}. It has a basis consisting of vectors
\begin{equation*}
\prod_{i=1}^\infty(e_{-i})^{k_i}(h_{-i})^{l_i}(f_{-i})^{m_i}v_{\lambda+2n}
\end{equation*}
where $k=(k_i)_{i=1}^\infty$, $l=(l_i)_{i=1}^\infty$ and $m=(m_i)_{i=1}^\infty$ are finite sequences with values in $\N$ and $n\in\Z$.

\medskip

We shall now construct the Virasoro algebra counterpart of relaxed highest-weight modules. It is well known that
\begin{equation*}
e=-L_1,\quad h=-2L_0\quad\text{and}\quad f=L_{-1}
\end{equation*}
generate a Lie subalgebra $\g_1$ of the Virasoro algebra $\mathcal{L}$ isomorphic to $\sl_2$. We then consider $E_{\lambda,\chi}=\bigoplus_{n\in\Z}\C v_{\lambda+2n}$ as a $\g_1$-module (in particular, $L_0$ acts on $v_{\lambda+2n}$ by multiplication with $-\lambda/2-n$), and as a module for the Lie subalgebra
\begin{equation*}
P_\mathcal{L}\coloneqq\g_1\oplus\bigoplus_{s=2}^\infty\C L_s\oplus\C C=\bigoplus_{s=-1}^\infty\C L_s\oplus\C C\subseteq\mathcal{L}
\end{equation*}
of the Virasoro algebra such that $\bigoplus_{s=2}^\infty\C L_s$ acts trivially on it and $C$ as $c\in\C$. Consider now the induced module for $\mathcal{L}$,
\begin{equation}\label{eq:Vir-relaxed}
\mathcal{E}_{\lambda,\chi}^\mathrm{Vir}\coloneqq U(\mathcal{L})\otimes_{U(P_\mathcal{L})}E_{\lambda,\chi}.
\end{equation}
It is a $\N$-gradable module for the Virasoro \voa{} $V_\mathrm{Vir}(c,0)$. It has a basis of vectors of the form
\begin{equation*}
\prod_{i=2}^\infty(L_{-i})^{m_i}v_{\lambda+2n}
\end{equation*}
where $m=(m_i)_{i=2}^\infty$ is a finite sequence with values in $\N$ and $n\in\Z$.

\medskip

We can then define the linear map $\varphi^+\colon\mathcal{E}_{\lambda,\chi}^{\sl_2} \overset{\sim}{\longrightarrow}\mathcal{E}_{\lambda,\chi}^\mathrm{Vir}$ via
\begin{equation*}
\prod_{i=1}^\infty(e_{-i})^{k_i}(h_{-i})^{l_i}(f_{-i})^{m_i}v_{\lambda+2n} \mapsto\prod_{i=1}^\infty(L_{-3i + 1})^{k_i}(L_{-3i})^{l_i}(L_{-3i- 1})^{m_i} v_{\lambda + 2n},
\end{equation*}
i.e.\ by mapping $e_{-i}\mapsto L_{-3i+1}$, $h_{-i}\mapsto L_{-3i}$, $f_{-i}\mapsto L_{-3i-1}$ for $i\in\Ns$ and $v_{\lambda+2n}\mapsto v_{\lambda+2n}$ for $n\in\Z$. Then:
\begin{prop}\label{prop:relaxed}
Let $k\in\C\setminus\{-2\}$ and $c\in\C$. Assume that $(\lambda,\chi)\in\C^2$. Then the map $\smash{\varphi^+\colon\mathcal{E}_{\lambda,\chi}^{\sl_2}\overset{\sim}{\longrightarrow}\mathcal{E}_{\lambda,\chi}^\mathrm{Vir}}$ is a graded vector-space isomorphism. It maps a vector of $(h_0,L_0)$-weight $(f,n)$ to a vector of $L_0$-weight $n'=-f/2+3n-3\chi$.
\end{prop}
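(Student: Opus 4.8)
The plan is to follow the argument of \autoref{prop:vermacorr} almost verbatim; the only structural novelty is that the top space $E_{\lambda,\chi}$ is now an infinite-dimensional dense $\sl_2$-module rather than a one-dimensional highest-weight line, and one must check that this does not spoil the weight bookkeeping. First I would write down the Poincaré--Birkhoff--Witt bases of both induced modules: $\mathcal{E}^{\sl_2}_{\lambda,\chi}$ is free over $U(\hat{\sl}_2^{<0})$ with top $E_{\lambda,\chi}$, while $\mathcal{E}^\mathrm{Vir}_{\lambda,\chi}$ is free over the subalgebra spanned by $\{L_{-j}\mid j\geq2\}$ with the same top (note that $L_{-1}$ is not a free mode, as it lies in $P_\mathcal{L}$ and acts within $E_{\lambda,\chi}$ as $f$). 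Since $e_{-i}\mapsto L_{-3i+1}$, $h_{-i}\mapsto L_{-3i}$, $f_{-i}\mapsto L_{-3i-1}$ sets up a bijection between $\{L_{-3i+1},L_{-3i},L_{-3i-1}\mid i\in\Ns\}$ and $\{L_{-j}\mid j\geq2\}$---the three residue classes modulo $3$ cover every index $j\geq2$ exactly once---and $v_{\lambda+2n}\mapsto v_{\lambda+2n}$ identifies the two copies of $E_{\lambda,\chi}$, the map $\varphi^+$ carries the first basis bijectively onto the second and is therefore a vector-space isomorphism. This step needs no computation.

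For the grading relation I would separate the contribution of the creation modes from that of the top. The modes contribute no net offset: $e_{-i},h_{-i},f_{-i}$ carry $h_0$-weights $2,0,-2$ and each raises the affine $L_0$-grade by $i$, while their images $L_{-3i+1},L_{-3i},L_{-3i-1}$ raise the Virasoro $L_0$-grade by $3i-1,3i,3i+1$; in each of the three cases $-\tfrac12(h_0\text{-weight})+3i$ equals the Virasoro grade, so the mode part alone already satisfies $n'=-f/2+3n$, exactly as in \autoref{prop:vermacorr}.

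The new ingredient is the top space, and tracking it is the step that deserves the most care. On the affine side $L_0=\Omega/(2(k+2))$ is central and acts on all of $E_{\lambda,\chi}$ by the single scalar $\chi/(2(k+2))$, so the whole top lies in one affine $L_0$-weight while its $h_0$-weights $f=\lambda+2n$ run over $\lambda+2\Z$. On the Virasoro side, $L_0=-\tfrac12 h$ along the subalgebra $\g_1$ acts on $v_{\lambda+2n}$ by $-\lambda/2-n$, so the top is spread over infinitely many $L_0$-weights. These two pictures are reconciled precisely because $-\lambda/2-n=-f/2$, i.e.\ the Virasoro top weight equals minus half the affine $h_0$-weight of $v_{\lambda+2n}$. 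Combining this with the vanishing mode offset gives $n'=-f/2+3n-C$, where $C$ equals three times the affine conformal weight of the top, i.e.\ $C=3\chi/(2(k+2))$; this is the grading relation of the proposition.

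I expect the main obstacle to be conceptual rather than computational. One has to be comfortable that only a graded vector-space isomorphism is claimed, so that no intertwining or convergence property must be verified, and that the apparent mismatch---a single affine $L_0$-eigenvalue on the top against a whole $\Z$-string of Virasoro $L_0$-eigenvalues---is absorbed entirely by the $-f/2$ term. Once this is granted, every remaining verification is the same finite bookkeeping as in the Verma-module case, now carried out uniformly in $n\in\Z$.
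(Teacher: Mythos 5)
Your proof is correct and is essentially the paper's own argument: the paper offers no proof of \autoref{prop:relaxed} beyond the explicit definition of $\varphi^+$ on Poincaré--Birkhoff--Witt bases, which is exactly your bijection of generators (the three residue classes $\{L_{-3i+1},L_{-3i},L_{-3i-1}\mid i\in\Ns\}$ covering $\{L_{-j}\mid j\geq2\}$, with $L_{-1}$ correctly absorbed into $P_\mathcal{L}$) together with the same weight bookkeeping as in \autoref{prop:vermacorr}.

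One point, however, deserves explicit attention. Your careful tracking of the top space yields the offset $3\chi/(2(k+2))$: by the paper's normalisation, $L_0=\Omega/(2(k+2))$ acts on $E_{\lambda,\chi}$ by $\chi/(2(k+2))$ on the affine side, while on the Virasoro side $L_0=-h/2$ gives the top weight $-(\lambda+2n)/2=-f/2$, so the computation forces $n'=-f/2+3n-3\chi/(2(k+2))$. This is the correct constant, but it is \emph{not} literally ``the grading relation of the proposition'' as you assert: the statement reads $n'=-f/2+3n-3\chi$, which agrees with your formula only when $2(k+2)=1$, i.e.\ at the level $k=-3/2$ (the $p=2$ case from \cite{BN22} that motivates the paper). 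For general $k$ the constant in the printed statement should be $3\chi/(2(k+2))$, i.e.\ three times the affine conformal weight of the top, exactly as your derivation shows --- one can cross-check this against \autoref{prop:vermacorr}, where the analogous offset $h-\mu(3\mu+2-2k)/(4(k+2))$ likewise involves $2(k+2)$. So rather than identifying your constant with the one printed, you should flag the discrepancy: your bookkeeping appears to expose a typographical error in the stated formula, not a gap in your argument.
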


While the relaxed highest-weight modules $\smash{\mathcal{E}_{\lambda,\chi}^{\sl_2}}$ for $V^k(\sl_2)$ still have well-defined formal characters $\ch^*_M(w,q)=\tr_Mq^{L_0}w^{h_0}$ (see, e.g., \cite{KR19}, though convergence has to be treated more carefully now), the Virasoro analogues $\smash{\mathcal{E}_{\lambda,\chi}^\mathrm{Vir}}$ do not have well-defined characters $\ch^*_M(q)=\tr_Mq^{L_0}$ any longer. Therefore, we cannot formulate \autoref{prop:relaxed} as a character identity.

We also remark that we can repeat the definition of $\smash{\mathcal{E}_{\lambda,\chi}^\mathrm{Vir}}$ by choosing the Lie subalgebra $\g_1\cong\sl_2$ of the Virasoro algebra $\mathcal{L}$ by $f=-L_1$, $h=2L_0$ and $e=L_{-1}$, i.e.\ by composing with the automorphism $e\mapsto f$, $f\mapsto e$, $h\mapsto -h$ of $\sl_2$. Then, we can define an analogous version $\varphi^-$ of the graded vector-space isomorphism, which now maps a homogeneous vector of $(h_0,L_0)$-weight $(f,n)$ to a vector of $L_0$-weight $n'=f/2+3n-3\chi$.


\subsection{Whittaker Modules}\label{sec:whittaker}

We also consider classical nondegenerate Whittaker modules for $\smash{\hat\sl_2}$. They are also smooth modules for $\smash{\hat\sl_2}$ and hence (weak) modules for the universal affine \voa{} $V^k(\sl_2)$. Fix $\lambda,\mu\in\C\setminus\{0\}$ and the level $k\in\C$. Let $\C=\C v_{\lambda,\mu}$ be a $1$-dimensional module for the subalgebra
\begin{equation*}
N\coloneqq\langle e_n,f_{n+1},h_{n+1},K\mid n\in\N\rangle_\C\subseteq\hat{\sl}_2
\end{equation*}
generated by the vector $v_{\lambda,\mu}$ such that
\begin{align*}
e_0 v_{\lambda,\mu}&=\lambda v_{\lambda,\mu},\quad f_1 v_{\lambda,\mu}=\mu v_{\lambda,\mu},\\
K v_{\lambda,\mu}&=kv_{\lambda,\mu},\\
e_{n+1}v_{\lambda,\mu}&=f_{n+2}v_{\lambda,\mu}=h_{n+1}v_{\lambda,\mu}=0\quad\text{for }n\in\N.
\end{align*}
The \emph{universal Whittaker module} $\operatorname{Wh}^k _{\hat{\sl}_2}(\lambda,\mu)$ for $V^k(\sl_2)$ is the induced module
\begin{equation*}
\operatorname{Wh}^k_{\hat{\sl}_2}(\lambda,\mu)=U(\hat{\sl}_2)\otimes_{U(N)}\C v_{\lambda,\mu}.
\end{equation*}
The Whittaker vector is $w^{\sl_2}_{\lambda,\mu}=1\otimes v_{\lambda,\mu}$. It is proved in \cite{ALZ16} that $ \operatorname{Wh}^k_{\hat{\sl}_2}(\lambda,\mu)$ is irreducible when $k\ne-2$. It has a basis consisting of vectors of the form
\begin{equation*}
\prod_{i=0}^\infty(e_{-i-1})^{k_{i+1}}(h_{-i})^{l_i}(f_{-i})^{m_i}w^{\sl_2}_{\lambda,\mu}
\end{equation*}
where $k=(k_i)_{i=1}^\infty$, $l=(l_i)_{i=0}^\infty$, $m=(m_i)_{i=0}^\infty$ are finite sequences with values in $\N$.

\medskip

Consider now Virasoro Whittaker modules. To this end, fix $c\in\C$ and define the Lie subalgebra
\begin{equation*}
N_\mathcal{L}\coloneqq\bigoplus_{s=1}^\infty\C L_s\oplus\C C\subseteq\mathcal{L}
\end{equation*}
of the Virasoro algebra $\mathcal{L}$. Assume that $a,b\in\C\setminus\{0\}$. Let $\C=\C u_{a,b}$ be the $1$-dimensional $N_\mathcal{L}$-module such that
\begin{align*}
L_1u_{a,b}&=a u_{a,b},\quad L_2u_{a,b}=b u_{a,b},\\
Cu_{a,b}&=c u_{a,b},\\
L_su_{a,b}&=0, s\geq3.
\end{align*}
The \emph{universal Whittaker module} $\operatorname{Wh}^c_\mathrm{Vir}(a,b)$ is defined as the induced $\mathcal{L}$-module
\begin{equation*}
\operatorname{Wh}^c_\mathrm{Vir}(a,b)=U(\mathcal{L})\otimes _{U(N_\mathcal{L})}\C u_{a,b}.
\end{equation*}
The Whittaker vector is $w^\mathrm{Vir}_{a,b}=1\otimes u_{a,b}$. $\operatorname{Wh}^c_\mathrm{Vir}(a,b)$ is a weak module for the vertex algebra $V_\mathrm{Vir}(c,0)$. It is proved in \cite{OW09} (see also \cite{LGZ11}) that $\operatorname{Wh}^c_\mathrm{Vir}(a,b)$ is irreducible.

$\operatorname{Wh}^c_\mathrm{Vir}(a,b)$ has a basis consisting of the vectors of the form
\begin{equation*}
\prod_{i=0}^\infty(L_{-i})^{m_i}w^\mathrm{Vir}_{a,b}
\end{equation*}
where $m=(m_i)_{i=0}^\infty$ is a finite sequence with values in $\N$.

\medskip

We can then define the linear map $\smash{\varphi^+\colon\operatorname{Wh}^k _{\hat{\sl}_2}(\lambda,\mu)\overset{\sim}{\longrightarrow}\operatorname{Wh}^c _\mathrm{Vir}(a,b)}$ via
\begin{equation*}
\prod_{i=0}^\infty(e_{-i-1})^{k_{i+1}}(h_{-i})^{l_i}(f_{-i})^{m_i}w^{\sl_2}_{\lambda,\mu}\mapsto\prod_{i=0}^\infty(L_{-3i + 1})^{k_i}(L_{-3i})^{l_i}(L_{-3i- 1})^{m_i}w^\mathrm{Vir}_{a,b},
\end{equation*}
i.e.\ by mapping $e_{-i-1}\mapsto L_{-3i + 1}$, $h_{-i}\mapsto L_{-3i}$, $f_{-i}\mapsto L_{-3i-1}$ for $i\in\N$ and $\smash{w^{\sl_2}_{\lambda,\mu}\mapsto w^\mathrm{Vir}_{a,b}}$. Then:
\begin{prop}\label{prop:isomorphism-whittaker}
Let $k\in\C$ and $c\in\C$. Assume further that $\lambda,\mu\in\C\setminus\{0\}$ and $a,b\in\C\setminus\{0\}$. Then the map $\smash{\varphi^+\colon\operatorname{Wh}^k_{\hat{\sl}_2}(\lambda,\mu)\overset{\sim}{\longrightarrow}\operatorname{Wh}^c_\mathrm{Vir}(a,b)}$ is a vector-space isomorphism.
\end{prop}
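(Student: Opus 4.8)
The plan is to prove this exactly as in the Verma-module case (\autoref{prop:vermacorr}) and the relaxed case (\autoref{prop:relaxed}): both Whittaker modules are parabolically induced, so the Poincaré--Birkhoff--Witt theorem equips each with an explicit monomial basis, and the map $\varphi^+$ is defined precisely by a substitution of the generating modes. To conclude that $\varphi^+$ is a vector-space isomorphism it therefore suffices to check that this substitution is a bijection between the two displayed bases; no module structure and no intertwining property are needed, only bijectivity of a linear map for which matching bases are already at hand.

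First I would record that $N\subseteq\hat\sl_2$ and $N_\mathcal{L}\subseteq\mathcal{L}$ are genuine Lie subalgebras, so that the induction makes sense. For $N_\mathcal{L}=\bigoplus_{s\geq1}\C L_s\oplus\C C$ this is immediate from $[L_m,L_n]=(m-n)L_{m+n}+\tfrac{c}{12}(m^3-m)\delta_{m+n,0}C$, whose central term vanishes and whose bracket remains in $\bigoplus_{s\geq1}\C L_s$ when $m,n\geq1$; for $N$ one verifies the brackets among the $e_n\ (n\geq0)$, $f_{n+1}$, $h_{n+1}$ and $K$ in the same way. Choosing the vector-space complements $\mathfrak m=\langle e_{-j}\mid j\geq1\rangle\oplus\langle h_{-i},f_{-i}\mid i\geq0\rangle$ and $\mathfrak n=\bigoplus_{n\leq0}\C L_n$, the standard decomposition $U(\g)\cong U(\mathfrak m)\otimes U(N)$ yields the monomial bases written just before the statement; in particular $\operatorname{Wh}^c_\mathrm{Vir}(a,b)$ has as basis the ordered monomials in $\{L_n\mid n\leq0\}$ applied to $w^\mathrm{Vir}_{a,b}$. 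I would stress that these bases hold for \emph{arbitrary} Whittaker parameters, since induced modules carry PBW bases for any character of the inducing subalgebra.

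The crux is then to verify that the prescribed assignment is a bijection from the generators of $\mathfrak m$ onto those of $\mathfrak n$. Sending $e_{-j}\mapsto L_{-3j+1}$, $h_{-i}\mapsto L_{-3i}$, $f_{-i}\mapsto L_{-3i-1}$ carries the three families onto $\{-2,-5,-8,\dots\}$, $\{0,-3,-6,\dots\}$ and $\{-1,-4,-7,\dots\}$, the three residue classes modulo $3$, whose disjoint union is exactly $\{n\in\Z\mid n\leq0\}$. The one point that must be handled with care is the lowest index in the $e$-family: it is essential that the free $e$-modes begin at $e_{-1}$ and are sent to $L_{-2}$ (and not to $L_1$), because $L_1$ and $L_2$ are \emph{not} free generators of $\operatorname{Wh}^c_\mathrm{Vir}(a,b)$ — they act on $w^\mathrm{Vir}_{a,b}$ by the scalars $a$ and $b$ — so their appearance in an image monomial would immediately destroy linear independence. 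Once this index bookkeeping is confirmed, $\varphi^+$ maps the PBW basis of $\operatorname{Wh}^k_{\hat\sl_2}(\lambda,\mu)$ bijectively onto that of $\operatorname{Wh}^c_\mathrm{Vir}(a,b)$ and hence extends to a vector-space isomorphism.

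I expect this off-by-one bookkeeping to be the only genuine obstacle; everything else is formal and parallels the earlier propositions. Finally, I would remark that the hypotheses $\lambda,\mu\neq0$ and $(a,b)\neq0$ are not actually used in establishing the vector-space isomorphism — they serve only to guarantee irreducibility of the two modules via \cite{ALZ16,OW09}, which frames the statement but is irrelevant to the bijection of bases itself.
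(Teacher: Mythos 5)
Your proposal is correct and takes essentially the same approach as the paper, which likewise treats the proposition as immediate from the two PBW monomial bases and the mode substitution, with the three families of free modes landing bijectively on the three residue classes modulo $3$ of $\{L_n \mid n\leq 0\}$. Your careful point about the lowest $e$-mode ($e_{-1}\mapsto L_{-2}$, never $L_1$, since $L_1,L_2$ act by the scalars $a,b$) is exactly right and in fact silently repairs an index typo in the paper's own displayed assignment (which literally reads $e_{-i-1}\mapsto L_{-3i+1}$ for $i\in\N$, with mismatched exponents $k_{i+1}$ versus $k_i$); the intended map is $e_{-i}\mapsto L_{-3i+1}$ for $i\in\Ns$, as in the Verma-module case.
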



\bibliographystyle{alpha_noseriescomma}
\bibliography{quellen}{}

\end{document}